\newtheorem{thm}{Theorem}[section]
\newtheorem{cor}[thm]{Corollary}
\newtheorem{lem}[thm]{Lemma}
\newtheorem{prop}[thm]{Proposition}
\theoremstyle{definition}
\newtheorem{defn}[thm]{Definition}
\theoremstyle{remark}
\newtheorem{rem}[thm]{Remark}
\numberwithin{equation}{section}
\newcommand{\R}{\mathbb R}
\newcommand{\be}{\begin{equation}}
\newcommand{\ee}{\end{equation}}
\newcommand{\eps}{\varepsilon}
\newcommand{\ph}{\varphi}
\newcommand{\ov}{\overline}
\newcommand{\Om}{\Omega}
\newcommand{\p}{\partial}
\newcommand{\comment}[1]{}
\begin{document}

\title[A localization theorem]{A localization theorem and boundary regularity for a class of degenerate Monge Ampere equations}

\author{Ovidiu Savin}

\begin{abstract}
We consider degenerate Monge-Ampere equations of the type
$$\det D^2 u= f \quad \mbox{in $\Om$}, \quad \quad f \sim \, d_{\p \Om}^\alpha \quad \mbox{near $\p \Om$,}$$
where $d_{\p \Om}$ represents the distance to the boundary of the domain $\Om$ and $\alpha>0$ is a positive power.
We obtain $C^2$ estimates at the boundary under natural conditions on the boundary data and the right hand side.
Similar estimates in two dimensions were obtained by J.X. Hong, G. Huang and W. Wang in \cite{HHW}.
\end{abstract}

\maketitle

\section{Introduction}

In this paper we discuss boundary regularity for solutions to degenerate Monge-Ampere equations of the type
$$\det D^2 u= f \quad \mbox{in $\Om$}, \quad \quad f \sim \, \, d_{\p \Om}^\alpha \quad \mbox{near $\p \Om$,}$$
where $d_{\p \Om}$ represents the distance to the boundary of a convex domain $\Om$ and $\alpha>0$ is a positive power.

Boundary estimates for the Monge-Ampere equation in the nondegenerate case $f\in C(\ov \Om)$, $f>0$, were obtained starting with the works of Ivockina \cite{I}, Krylov \cite{K}, Caffarelli-Nirenberg-Spruck \cite{CNS} (see also \cite{C,TW,W}). The general strategy for the $C^2$ estimates in the nondegenerate case is to obtain first a bound by above for the second derivatives on $\p \Om$, and then to use the equation and bound all the pure second derivatives by below. When $f=0$ on $\p \Omega$ this bound cannot hold since some second derivative becomes $0$. In this paper we show that, under general conditions on the data, in a neighborhood of $\p \Om$ only one second derivative tends to 0 and all tangential pure second derivatives are continuous and bounded by below away from 0. The difficulty in proving this result lies in the fact that the tangential pure second derivatives are only subsolutions for the linearized operator, and therefore it is not clear whether or not such a lower bound is satisfied.
In the case of two dimensions J.X. Hong, G. Huang and W. Wang in \cite{HHW} used that the tangential second derivative is in fact a solution to an elliptic equation and showed that $u\in C^2$ up to the boundary.

In this paper we study the geometry of boundary sections in the degenerate case when $f$ behaves in a neighborhood of $\p \Om$ as a positive power of the distance to $\p \Om$. We use the compactness methods developed in \cite{S1} where a localization theorem for boundary sections of solutions to the Monge-Ampere equation was obtained. In Theorem \ref{T1} we show that a localization theorem holds also in the degenerate case, and it states that boundary sections have the shape of half-ellipsoids. We achieve this by reducing the problem to the study of tangent cones for solutions to degenerate Monge-Ampere equations that have a singularity on $\p \Omega$. Then we use the ideas from \cite{S2} where the regularity of such tangent cones was investigated for the classical Monge-Ampere equation.

Before we state our main results we recall the notion for a function to be $C^2$ at a point. We say that $u$ is $C^2$ at $x_0$ if there exists a quadratic polynomial $Q_{x_0}$ such that, in the domain of definition of $u$, $$u(x)=Q_{x_0}(x) + o(|x-x_0|^2).$$
 Throughout this paper we refer to a linear map $A$ of the form
 $$Ax=x+\tau x_n, \quad \quad \mbox{with} \quad \tau \cdot e_n=0,$$
as a {\it sliding along} $x_n=0$. Notice that the map $A$ is the identity map when is restricted to $x_n=0$ and it becomes a translation of vector $s \tau$ when is restricted to $x_n=s$ .

Let $\Omega$ be a bounded convex domain such that $\p \Om$ is $C^{1,1}$ at the origin, that is $0 \in \p \Om$ and
\begin{equation}\label{01}
\Om \subset \{x_n>0\},\quad \mbox{ and $\Om$ has an interior tangent ball at the origin.}
\end{equation}
We are interested in the behavior near the origin of a convex solution $u\in C(\ov \Om)$ to the equation
\begin{equation}\label{02}
\det D^2 u=g(x) \, d_{\p \Om}^\alpha, \quad \quad \quad \alpha>0,
\end{equation}
where $g$ is a nonnegative function that is continuous at the origin, $g(0)>0$.

Our main theorem is the following pointwise $C^2$ estimate at the boundary (see also Theorem \ref{T2} for a more precise quantitative version).

\begin{thm}\label{T01}
Let $\Om$, $u$ satisfy \eqref{01}, \eqref{02} above. Assume that
$$ u(0)=0, \quad \nabla u(0)=0, \quad u=\ph \quad \mbox{on $\p \Om$},$$
and the boundary data $\ph$ is $C^2$ at $0$, and it separates quadratically away from $0$.

Then $u$ is $C^2$ at $0$. Precisely, there exists a sliding $A$ along $x_n=0$ and a constant $a>0$ such that
$$u(Ax)=Q_0(x')+ a x_n^{2+\alpha} + o(|x'|^2+x_n^{2+\alpha}),$$
where $Q_0$ represents the quadratic part of the boundary data $\ph$ at the origin.
\end{thm}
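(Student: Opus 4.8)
The plan is to deduce Theorem \ref{T01} from the localization theorem (Theorem \ref{T1}), which describes the geometry of boundary sections $S_h(0) = \{u < \ell_h + h\}$ (after subtracting a suitable linear function $\ell_h$) as being comparable to half-ellipsoids. The starting point is to normalize: since $\varphi$ is $C^2$ at $0$ with quadratic part $Q_0$ and separates quadratically, and since $u = \varphi$ on $\partial\Om$ with $u(0) = 0$, $\nabla u(0) = 0$, we may subtract $Q_0$ and apply a sliding $A$ along $x_n = 0$ so that the relevant linear/quadratic normalizations hold; slidings preserve the Monge-Ampere measure up to a fixed constant (determinant one) and preserve $\{x_n = 0\}$, so the equation \eqref{02} and the boundary condition \eqref{01} are essentially unchanged. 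The goal then becomes to show that, in these normalized coordinates, $u - Q_0$ behaves like $a\, x_n^{2+\alpha}$ in the $x_n$-direction while staying $o(|x'|^2)$ in the tangential directions.

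Next I would run the iteration on dyadic sections. By Theorem \ref{T1}, the section $S_h(0)$ at height $h$ is equivalent to an ellipsoid $E_h$; the degeneracy $f \sim d_{\partial\Om}^\alpha \sim x_n^\alpha$ forces the $x_n$-extent of $E_h$ to shrink at a definite faster rate than in the uniformly elliptic case — heuristically, if $S_h$ has tangential size $\rho$ and normal size $\mu$, then the Monge-Ampere measure balance $\mu \cdot \rho^{2(n-1)} \cdot (\mu/\rho)^{\,?} \sim \int_{S_h} x_n^\alpha \sim \mu^{\alpha+1}\rho^{n-1}$ combined with $h \sim \mu^2$ (from the normal second derivative) versus $h \sim \rho^2$ (tangential) pins down the exponent $2+\alpha$: one gets $\mu \sim h^{1/(2+\alpha)} \cdot$(correction) and the tangential sections become round of size $\sim h^{1/2}$. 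The precise version is the quantitative Theorem \ref{T2}; I would use it to show that the rescalings $u_h(x) := u(\delta_h x)/h$ under the appropriate anisotropic dilation $\delta_h$ (stretching $x_n$ by the normal scale, $x'$ by the tangential scale) converge, along a subsequence, to a limiting solution on a half-space whose Monge-Ampere measure is exactly $c\, x_n^\alpha$ with boundary data $= Q_0$ restricted to $\{x_n = 0\}$. The classification of such tangent cones — this is where the ideas from \cite{S2} on tangent cones for the classical Monge-Ampere equation enter, adapted to the degenerate weight — shows the only possibility is $Q_0(x') + a\, x_n^{2+\alpha}$.

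I would then upgrade the subsequential convergence to a genuine asymptotic expansion. The key is that the localization theorem gives quantitative control (in Theorem \ref{T2}) that is uniform in $h$, so the limit is independent of the subsequence; feeding this back, one obtains $u(Ax) - Q_0(x') - a x_n^{2+\alpha} = o(|x'|^2 + x_n^{2+\alpha})$ as required, which is exactly the statement that $u$ is $C^2$ at $0$ in the generalized sense defined in the introduction (with the anisotropic modulus $|x'|^2 + x_n^{2+\alpha}$ replacing $|x - x_0|^2$, which is natural since $2 + \alpha > 2$ makes the normal term lower order).

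The main obstacle, as the introduction itself flags, is controlling the tangential pure second derivatives from below: they are only \emph{subsolutions} of the linearized operator $L = (\det D^2 u)\, (D^2 u)^{-1}_{ij} \partial_{ij}$, so a naive maximum-principle lower bound is unavailable. The compactness/localization approach circumvents this by not trying to bound $u_{\tau\tau}$ directly via PDE estimates, but instead by controlling the \emph{shape} of sections (which is a more robust, scale-invariant quantity) and extracting the second-order behavior from the limiting geometry. Making the compactness argument work requires: (a) a priori control that sections do not degenerate in the tangential directions (no "long thin" sections in $x'$) — this is the heart of Theorem \ref{T1} in the degenerate setting, and proving it is where \cite{S1}-type balancing and \cite{S2}-type tangent cone rigidity must be combined and genuinely re-derived for the weight $x_n^\alpha$; and (b) ensuring the limiting problem is well-posed, i.e. that the degenerate Monge-Ampere operator with right-hand side $x_n^\alpha$ on a half-space with quadratic boundary data has a unique convex solution with the expected homogeneity, which is itself a small rigidity statement. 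I expect step (a) to consume the bulk of the work.
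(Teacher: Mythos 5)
Your high-level architecture matches the paper's: reduce Theorem \ref{T01} to the quantitative Theorem \ref{T2}, prove Theorem \ref{T2} from Theorem \ref{T1} by rescaling plus a Liouville-type classification of blow-up limits, and observe that the subsolution difficulty for $u_{\tau\tau}$ is sidestepped by controlling the shape of sections rather than $D^2u$ directly. But there is a substantive confusion about how the blow-up limits are classified. You attribute that classification to ``ideas from \cite{S2} on tangent cones, adapted to the degenerate weight.'' That is not what happens, and the strategy would fail as written: the tangent cone $\Gamma_u$ (supremum of supporting planes at the origin) that drives the \cite{S2}-type Pogorelov machinery of Sections \ref{s5}--\ref{s6} is only nontrivial because there the analysis is applied to functions in the limit class $\mathcal D_0^\mu$ whose boundary data has a \emph{jump discontinuity} at $0$; that whole apparatus lives inside the proof of the localization Theorem \ref{T1}/\ref{T3}. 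A blow-up limit $u_0\in\mathcal K$ of a solution that is already $C^2$ at $0$ has trivial tangent cone, so the \cite{S2} mechanism does not see it.

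The classification actually used is the Liouville theorem, Proposition \ref{p6}, whose proof occupies all of Section \ref{s7} and rests on a genuinely different set of tools: (i) compactness of the invariant class $\mathcal K$ to promote the supremum of the subsolution $u_{ii}$ to an attained interior maximum, at which the strong maximum principle forces $D^2_{x'}u\equiv I$ (Step 1); (ii) concavity and $C^{\alpha/(2+\alpha)}$ regularity of the trace $u_n(x',0)$ (Step 2); (iii) the key computation that $u_n/x_n^{1+\alpha}$ solves a linear elliptic equation, which turns the normal rigidity into a maximum-principle statement (Steps 3--4); and (iv) a barrier comparison to close the argument (Step 5). Your item (b), which you call ``a small rigidity statement,'' is exactly this Liouville theorem, and it is where the subsolution difficulty you correctly flag is actually resolved; it is not a well-posedness footnote and it does not follow from the tangent-cone estimates. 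A minor secondary point: your dimensional heuristic posits $h\sim \mu^2$ for the normal scale; the correct balance is $h\sim\mu^{2+\alpha}$ (equivalently $\mu\sim h^{1/(2+\alpha)}$), as the conclusion of Theorem \ref{T1} already encodes.
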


If the hypotheses above hold and $\p \Om \in C^2$, $\ph \in C^2$, $g \in C^\beta$ in a neighborhood of $0$, then $u \in C^2(\ov \Om \cap B_\delta)$, for some small $\delta>0$ (see Theorem \ref{T2.1}). Here we require $g \in C^\beta$ only to guarantee the $C^2$ regularity at interior points close to $\p \Om$.

It is worth remarking that the $C^2$ estimate of Theorem \ref{T01} does not hold for harmonic functions or solutions to the classical Monge-Ampere equation. In these cases we need stronger assumptions on $\p \Om$ and $\ph$, i.e. to be $C^{2,Dini}$ at the origin.

In a subsequent work we intend to use Theorem \ref{T01} and perturbations arguments to obtain $C^{2,\beta}$ and higher order estimates when the data $\p \Om$, $\ph$, $g$ is more regular.

Our second result which is closely related to Theorem \ref{T01} is a Liouville theorem for degenerate solutions to Monge-Ampere equations defined in half-space.

\begin{thm}\label{T02}
Assume $u \in C(\ov{\R^n_+})$ satisfies
\begin{equation}\label{03}
\det D^2 u=x_n^\alpha, \quad \quad u(x',0)=\frac 12 |x'|^2.
\end{equation}
If there exists $\eps>0$ small such that $u =O( |x|^{3+\alpha-\eps})$ as $|x| \to \infty$, then
$$u(Ax)=bx_n+\frac 12 |x'|^2 + \frac{x_n^{2+\alpha}}{(1+\alpha)(2+\alpha)},  $$
for some sliding $A$ along $x_n=0$, and some constant $b$.
\end{thm}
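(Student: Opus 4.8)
The plan is to combine the pointwise boundary expansion of Theorem~\ref{T01} at the origin with the anisotropic scaling invariance of the equation $\det D^2u=x_n^\alpha$, $u(x',0)=\tfrac12|x'|^2$, and then to globalize the resulting infinitesimal information by a tangent-cone analysis in the spirit of \cite{S2}. Along these lines, I would first reduce to a normalized situation. Since the right-hand normal derivative $b:=\p_{x_n}u(0^+)$ is finite (by convexity together with the growth hypothesis), subtracting the linear function $b\,x_n$ --- which alters neither $\det D^2u$, nor the boundary data $\tfrac12|x'|^2$, nor the growth --- we may assume $\nabla u(0)=0$. Then $\Om=\R^n_+$ and $u$ satisfy the hypotheses of Theorem~\ref{T01} (with $g\equiv1$ and $\ph=\tfrac12|x'|^2$, which is $C^2$ at $0$ and separates quadratically), so there exist a sliding $A$ along $x_n=0$ and a constant $a>0$ with
\be\label{e:exp}
u(Ax)=\tfrac12|x'|^2+a\,x_n^{2+\alpha}+o\big(|x'|^2+x_n^{2+\alpha}\big)\qquad\text{as }x\to0 .
\ee
Replacing $u$ by $u\circ A$, which solves the same equation with the same boundary data and growth, we may take $A=\mathrm{Id}$ in \eqref{e:exp}; it then remains to prove $u\equiv w$, where $w(x):=\tfrac12|x'|^2+\frac{x_n^{2+\alpha}}{(1+\alpha)(2+\alpha)}$.

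Next I would pin down $a$ by a blow-down at the origin. For $r>0$ set $u_r(x):=r^{-2}u\big(rx',r^{2/(2+\alpha)}x_n\big)$; each $u_r$ is a convex solution of $\det D^2u_r=x_n^\alpha$ in $\R^n_+$ with $u_r(x',0)=\tfrac12|x'|^2$, and \eqref{e:exp} shows that $u_r\to\tfrac12|x'|^2+a\,x_n^{2+\alpha}$ locally uniformly on $\ov{\R^n_+}$ as $r\to0$. Since Monge--Amp\`ere measures are stable under local uniform convergence of convex functions, this limit solves the same equation; computing its Monge--Amp\`ere measure directly forces $a=\big[(1+\alpha)(2+\alpha)\big]^{-1}$, so the blow-down of $u$ at $0$ is exactly $w$, i.e.\ $u=w+o\big(|x'|^2+x_n^{2+\alpha}\big)$ near the origin.

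The remaining --- and hardest --- step is to upgrade this infinitesimal identity to $u\equiv w$ on all of $\ov{\R^n_+}$, which I would carry out by a tangent-cone analysis following \cite{S2}. Using the degenerate localization theorem (Theorem~\ref{T1}) for the rescalings $u_r$, together with the growth bound $u=O(|x|^{3+\alpha-\eps})$ --- which is exactly the exponent needed to force the tangent cone of $u$ at infinity to be a degree-$2$ homogeneous solution with respect to the dilations $x\mapsto(rx',r^{2/(2+\alpha)}x_n)$, rather than something of higher homogeneity --- one first controls the boundary sections $S_h=\{u<h\}$ at \emph{every} scale $h$, showing that after a uniformly bounded normalizing sliding they are comparable to the sections of $w$. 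A rigidity statement for these homogeneous tangent cones (the degenerate counterpart of the tangent-cone regularity of \cite{S2}) identifies the only such cone carrying the boundary data $\tfrac12|x'|^2$ as $w$ itself, and a dyadic iteration --- or, equivalently, the monotonicity of a geometric quantity attached to the $S_h$ that must then be constant along the dilation orbit --- propagates the coincidence from $r\to0$ to every $r$, giving $u=w$. Undoing the two reductions yields $u(Ax)=b\,x_n+w(x)$, as asserted.

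I expect the crux to be precisely this last step: controlling the sections uniformly across all scales via the localization theorem and, above all, the rigidity/classification of the homogeneous solutions of the degenerate equation with the model boundary data, where the localization theorem and the sharp growth exponent $3+\alpha-\eps$ enter in an essential way. A minor technical point, again settled by convexity, the growth hypothesis, and barriers for the equation, is the finiteness of $b$ and the boundary regularity of $u$ at $0$ needed to invoke Theorem~\ref{T01}.
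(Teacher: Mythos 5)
Your reductions at the origin match the paper's in spirit (the paper uses Theorem~\ref{T1} to arrange $u=O(|x'|^2+x_n^{2+\alpha})$ after subtracting the tangent plane and a sliding, and your extra blow-down at $0$ to pin down $a$ is harmless but not needed). The gap is in your final globalization step, which is left as a sketch and whose two main ingredients are problematic. First, the anisotropic blow-down at infinity $u^r(x)=r^{-2}u(rx',r^{2/(2+\alpha)}x_n)$ need not even be locally bounded under $u=O(|x|^{3+\alpha-\eps})$: along the $x_n$-axis, $u^r(0,1)=r^{-2}u(0,r^{2/(2+\alpha)})$ can be of size $r^{2(1-\eps)/(2+\alpha)}\to\infty$, so there is no two-homogeneous ``tangent cone at infinity'' a priori, and the growth exponent does \emph{not} by itself force one. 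Second, neither the rigidity statement for such cones nor the monotone quantity attached to the $S_h$ that you invoke is identified or proved; the paper has no such standalone theorem.

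What the paper actually does is a dichotomy argument on the height $b(h):=\max_{S_h}x_n$ of the sections at large $h$. Lemma~\ref{15} says that if, after normalizing $u$ in $S_h$, the section is too flat --- $b(h)\le c_1 h^{1/(2+\alpha)}$ --- then $b(th)\ge 2\,t^{1/(3+\alpha-\eps)}b(h)$ for some $t\in[\bar c,1]$, and iterating this forces $b(h)=o\big(h^{1/(3+\alpha-\eps)}\big)$ as $h\to\infty$. On the other hand the growth hypothesis gives $u(0,t)\le Ct^{3+\alpha-\eps}$, hence $b(h)\ge ch^{1/(3+\alpha-\eps)}$ --- a contradiction. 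So $b(h_m)\sim h_m^{1/(2+\alpha)}$ along a sequence $h_m\to\infty$, and the corresponding rescalings $\tilde u_{h_m}$ satisfy the hypotheses of Theorem~\ref{T2} with $\ph=\tfrac12|x'|^2$; for arbitrarily small $\eta$, that theorem gives $(1-\eta)U_0\le u\le(1+\eta)U_0$ (the sliding is pinned to the identity by the near-origin normalization), and letting $\eta\to0$ gives $u=U_0$. The Liouville input is thus Theorem~\ref{T2} (built on Proposition~\ref{p6}), not a classification of homogeneous cones, and this pigeonhole dichotomy on $b(h)$ --- rather than a monotonicity formula or tangent-cone rigidity --- is the missing mechanism in your outline.
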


We remark that Theorem \ref{T02} holds also for $\alpha=0$. The theorem states that solutions to \eqref{03} that grow at a power less than $|x|^{3+\alpha}$ at $\infty$ are unique modulo additions of $c \, x_n$ and domain deformations given by slidings along $x_n=0$. Clearly, both transformations leave \eqref{03} invariant.
 The growth condition at infinity is necessary since $$\frac{x_1^2}{2(1+x_n)} + \frac 12 (x_2^2+...+x_{n-1}^2)+ \frac{x_n^{2+\alpha}}{(1+\alpha)(2+\alpha)} + \frac{x_n^{3+\alpha}}{(2+\alpha)(3+\alpha)} $$
satisfies also \eqref{03}.

 In the two dimensional case Theorem \ref{T02} follows easily after performing a partial Legendre transform in the $x_1$ direction. Then the problem reduces to the classification of solutions to a linear equation defined in half-space. However this approach does not seem to work in higher dimensions.

Theorem \ref{T01} applies when the right hand side $f$, which may depend also on $u$ and $\nabla u$, is expected to behave as a power of the distance to $\p \Om$. For example we obtain $C^2$ estimates up to the boundary for solutions to the eigenvalue problem for Monge-Ampere equation which was first investigated by Lions in \cite{L}.

\begin{thm}\label{T03}
Assume $\p \Om \in C^2$ is uniformly convex and $u \in C(\ov \Om)$ satisfies  $$(\det D^2 u)^\frac 1 n= \lambda |u| \quad \quad \mbox{in $\Om$},  \quad \quad u=0 \quad \mbox{on $\p \Om$.}$$
 Then $u \in C^2(\ov \Om)$.
\end{thm}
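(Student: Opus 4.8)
The plan is to reduce the eigenvalue equation to the framework of Theorem \ref{T01} by analyzing the behavior of the solution $u$ near $\p\Om$. Since $u<0$ in $\Om$ and $u=0$ on $\p\Om$, write $w=-u>0$, so that $\det D^2 w = \lambda^n w^n$ in $\Om$ with $w=0$ on $\p\Om$. First I would establish that $w$ vanishes linearly at $\p\Om$: by the uniform convexity of $\Om$ and standard barriers (comparing with appropriate sub- and supersolutions built from the distance function, using that $\det D^2 w$ is bounded since $w$ is bounded), one obtains $c\, d_{\p\Om} \le w \le C\, d_{\p\Om}$ near $\p\Om$. Consequently $\det D^2 w = \lambda^n w^n$ behaves like $g(x)\, d_{\p\Om}^n$ near $\p\Om$ with $g$ bounded between positive constants, which places us in the setting of \eqref{02} with exponent $\alpha=n$.

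Next, at a fixed boundary point $x_0\in\p\Om$, after a rotation I would check the hypotheses of Theorem \ref{T01}: $\p\Om\in C^2$ is in particular $C^{1,1}$ at $x_0$ (uniform convexity gives the interior tangent ball), the boundary data $\ph\equiv 0$ is trivially $C^2$ at $x_0$ with $Q_{x_0}\equiv 0$. The remaining point is that $\ph$ ``separates quadratically away from $x_0$'' along $\p\Om$ — here this must be interpreted as a statement about how $u$ itself, restricted near the boundary, controls the geometry; I would use the linear vanishing $w\sim d_{\p\Om}$ together with the uniform convexity of $\Om$ to verify that the relevant quadratic separation holds (the tangential sections of $w$ at $x_0$ have the right shape). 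To upgrade the right-hand side to a continuous function with a positive value at $x_0$, observe that $g = \lambda^n (w/d_{\p\Om})^n$; the delicate point is continuity of $w/d_{\p\Om}$ at $\p\Om$, which itself follows once Theorem \ref{T01} (applied iteratively, or in its quantitative form Theorem \ref{T2}) gives the asymptotic expansion $u(Ax) = a\, x_n^{2+n} + o(\cdot)$ in the normal direction together with a nondegenerate tangential part. A bootstrap: the expansion shows $w \sim (\text{const})\, d_{\p\Om}$ with a limit, so $g$ is continuous at $x_0$ with $g(x_0)>0$.

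Applying Theorem \ref{T01} at every $x_0\in\p\Om$ then yields that $u$ is $C^2$ at each boundary point, with a pointwise quadratic expansion. Finally, to conclude $u\in C^2(\ov\Om)$ I would combine the boundary $C^2$ estimates with interior $C^2$ regularity: in the interior, away from $\p\Om$, the equation $(\det D^2 u)^{1/n}=\lambda|u|$ has a right-hand side that is Lipschitz (hence $C^\beta$) in $x$ wherever $u$ is $C^1$ and bounded away from where it degenerates, so Caffarelli's interior $C^{2,\alpha}$ theory applies on compact subsets of $\Om$; near $\p\Om$ the remark following Theorem \ref{T01} (Theorem \ref{T2.1}, which requires $g\in C^\beta$) gives $u\in C^2(\ov\Om\cap B_\delta(x_0))$ — here $g = \lambda^n(w/d_{\p\Om})^n$ is $C^\beta$ near $\p\Om$ once the $C^2$-at-the-boundary expansion is known with enough uniformity. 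A covering argument over $\p\Om$ and a compact interior piece then gives $u\in C^2(\ov\Om)$.

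The main obstacle is the bootstrap verifying that the right-hand side $g=\lambda^n(w/d_{\p\Om})^n$ is continuous (and then Hölder) up to $\p\Om$ with a positive value: a priori $w/d_{\p\Om}$ is only pinched between two positive constants, which is not enough to invoke Theorem \ref{T01} directly, so one must set up the argument so that the localization theorem and the regularity of $w/d_{\p\Om}$ are obtained together — morally, Theorem \ref{T01} is stable under the right-hand side being merely pinched between positive constants near the boundary once one passes to sections, because the compactness method of \cite{S1,S2} only uses the value of $g$ at the point in the limit. Making this self-improvement rigorous, and ensuring the estimates are uniform in $x_0\in\p\Om$ so that the covering argument produces genuine $C^2(\ov\Om)$ regularity rather than just pointwise $C^2$, is where the real work lies.
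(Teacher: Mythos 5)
Your reduction to a power-of-distance right-hand side with $\alpha=n$ and the barrier estimate $cd_{\p\Om}\le|u|\le Cd_{\p\Om}$ agree with the paper's opening moves, and you correctly identify the central obstruction: to invoke Theorem \ref{T01} (or \ref{T2.1}) you need $g=\lambda^n(|u|/d_{\p\Om})^n$ to be continuous (resp.\ H\"older) at $\p\Om$ with positive boundary value, while the barriers only pinch $g$ between two positive constants. But you stop at naming the obstacle and gesture at a ``self-improvement'' without a mechanism, and the mechanism you do propose---bootstrapping Theorem \ref{T01} on itself---is circular as you yourself observe; there is no version of the localization theorem in the paper that applies with merely a pinched $g$, so the loop cannot be closed that way.

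The paper breaks the circle by stepping \emph{down} to weaker tools that do not require continuity of $g$. First, the two-sided bound $c\le|\nabla u|\le C$ on $\p\Om$ (from $|u|\sim d_{\p\Om}$ and convexity) gives quadratic separation of $u$ from each boundary tangent plane, and Proposition \ref{p0}---which requires only a bound $\det D^2 u\le C$ and quadratic growth on $\p\Om$, not any structure on $g$---yields the pointwise $C^{1,1/3}$ estimate $0\le u-\nabla u(x_0)\cdot(x-x_0)\le C|x-x_0|^{4/3}$ at every $x_0\in\p\Om$. This gives $\nabla u\in C^{1/3}(\p\Om)$, hence $g=|u|/d_{\p\Om}$ has a uniform $C^{1/3}$ modulus of continuity \emph{on $\p\Om$ only} (i.e.\ inequality \eqref{83} with $\beta=1/3$). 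Then Remark \ref{r7}---which was designed precisely for the situation where $g$ is H\"older on $\p\Om$ but not yet in a full neighborhood---gives $u\in C^{1,\gamma}(\ov\Om)$ for any $\gamma<1$, whence $g\in C^\gamma(\ov\Om)$, and finally Theorem \ref{T2.1} applies. The missing ideas in your proposal are exactly Proposition \ref{p0} (a coarse $C^{1,1/3}$ estimate needing no structure on the right-hand side beyond boundedness) and Remark \ref{r7} (H\"older regularity of $g$ on $\p\Om$ already suffices for the $C^{1,\gamma}$ upgrade); without them the bootstrap you outline has no starting point.
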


In two dimensions Theorem \ref{T03} was obtained in \cite{HHW}.

The paper is organized as follows. In Section \ref{s2} we introduce some notation and state our main results, the localization Theorem \ref{T1} and the quantitative $C^2$ estimate Theorem \ref{T2}. Most of the paper is devoted to the proof of the localization Theorem \ref{T1}. In Section \ref{s3} we deal with some general properties of boundary sections. In Section \ref{s4} we use compactness and reduce Theorem \ref{T1} to Theorem \ref{T3} which deals with estimates of boundary sections for a class of solutions with discontinuities on $\p \Om$. In Section \ref{s5} we obtain two Pogorelov type estimates for solutions to certain Monge-Ampere equations. We use these estimates in Section \ref{s6} where we complete the proof of Theorem \ref{T3}. In Section \ref{s7} we prove a Liouville theorem from which Theorem \ref{T2} follows. Finally is Section \ref{s8} we prove Theorems \ref{T02} and \ref{T03}.

\section{Statement of main results}\label{s2}

We introduce some notation. We denote points in $\R^n$ as
$$x=(x_1,...,x_n)=(x',x_n), \quad \quad \quad x' \in \R^{n-1}.$$
We denote by $B_r(x)$ the ball of radius $r$ and center $x$, and by $B_r'(x')$ the ball in $\R^{n-1}$ of radius $r$ and center $x'$.

Given a convex function $u$ defined on a convex set $\ov \Om$, we denote by $S_h(x_0)$ the section centered at $x_0$ and height $h>0$,
$$S_h(x_0):=\{x \in \ov \Om | \quad  u(x) < u(x_0)+\nabla u(x_0) \cdot (x-x_0) +h \}.$$
We denote for simplicity $S_h=S_h(0)$, and sometimes when we specify the dependence on the function $u$ use the notation $S_h(u)=S_h$.

Throughout the paper we think of the constants $n$, $\alpha$ and $\mu$ as being fixed. We refer to all positive constants depending only $n$, $\alpha$ and $\mu$ as {\it universal constants} and we denote them by $c$, $C$, $c_i$, $C_i$. The dependence of various constants also on other parameters like $\rho$ and $\rho'$ will be denoted by $c(\rho, \rho')$.

Our assumptions are the following (we assume $\rho$, $\rho'$ are small positive constants).

First we assume $\Omega$ is $C^{1,1}$ at the origin, that is

\

H1) $\Om$ is an open convex set , $0 \in \p \Om$, $$\Om \subset \{ x_n > 0\} \cap B_{1/\rho},$$ and $\Om$ has an interior tangent ball of radius $\rho$ at the origin.

\

Let $x_{n+1}=0$ be the tangent plane for a continuous convex function $u:\ov \Om \to \R$ at the origin, that is

\

H2) $u \ge 0$, $u(0)=0$, $\nabla u(0)=0$ in the sense that $x_{n+1}=t x_n$ is not a supporting plane for the graph of $u$ at $0$ for any $t >0$.

\

We assume that $u$ separates on $\p \Omega$ quadratically away from its tangent plane in a neighborhood of $0$. Precisely

\

H3) For some $\eps_0 \in (0,\frac 14)$ we have $$(1-\eps_0)\ph(x') \le u(x) \le (1+\eps_0) \ph(x') \quad \quad \mbox{for all} \quad x \in \p \Omega \cap B_{\rho/2},$$ with $\ph(x')$ a function of $n-1$ variables satisfying $$\mu^{-1}I \ge D^2_{x'} \ph \ge \mu \, \, I,$$ and also at the points on $\p \Omega$ outside $B_{\rho/2}$ we assume
$$u(x) \ge \rho' \quad \mbox{on} \quad \p \Om \cap \{ x_n \le \rho\} \setminus B_{\rho/2}.$$

\

We assume that the Monge-Ampere measure of $u$ near $0$ behaves as $d_{\p \Om}^\alpha$ where $d_{\p \Om}(x)$ denotes the distance from $x$ to $\p \Om$ i.e.,

\

H4) $$(1-\eps_0) d_{\p \Om}^\alpha \le \det D^2 u \le (1+\eps_0)d_{\p \Om}^\alpha \quad \quad \mbox{in} \quad B_\rho \cap \Omega, $$
and $$\det D^2 u \le 1/ \rho' \quad \quad \mbox{in} \quad \{x_n < \rho\} \cap \Omega.$$

Our localization theorem states that if $u$ satisfies the hypotheses above then the sections $S_h$ of $u$ at the origin
are equivalent, up to a sliding along $x_n=0$, to the sections of the function $|x'|^2 + x_n^{2+\alpha}$.

\begin{thm} [Localization Theorem] \label{T1}
Assume H1, H2, H3, H4 are satisfied. If $\eps_0$ is sufficiently small, universal, then
$$k \, A \mathcal E_h \, \cap \ov \Om \, \subset S_h \, \subset k^{-1} \, A \mathcal E_h \, \cap \ov \Om  \quad \quad \mbox{for all $h < c(\rho, \rho')$,}$$
where $$ \mathcal E_h:=\{ |x'|^2+x_n^{2+\alpha} < h\},$$
and $A$ is a {\it sliding} along $x_n=0$ i.e.
$$Ax=x + \tau x_n, \quad \quad \tau=(\tau_1,\tau_2,..,\tau_{n-1},0), \quad |\tau|\le C(\rho,\rho').$$
The constant $k$ above is universal, that is depends only on $n$, $\alpha$ and $\mu$, and $c(\rho,\rho')$, $C(\rho,\rho')$ depend on the universal constants and $\rho$, $\rho'$.

\end{thm}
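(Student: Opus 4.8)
The plan is to follow the compactness strategy of \cite{S1}, adapted to the degenerate model function $P(x):=|x'|^2+x_n^{2+\alpha}$. First I would establish the basic geometric properties of boundary sections under H1--H4: using the barrier $|x'|^2 \pm \eps_0|x'|^2 + C x_n - c x_n^2$ (or rather the explicit model solution to $\det D^2 v = d_{\p\Om}^\alpha$ on a half-space slab) one shows that $S_h$ is trapped between $c$ and $C$ multiples of the model section $\mathcal E_h$ at the rough level, i.e.\ $S_h \subset C\mathcal E_{Ch}$ and $S_h \supset$ a section of comparable size, together with the fact that $S_h$ is a genuine ``boundary section'' that stays away from the part of $\p\Om$ outside $B_{\rho/2}$ once $h<c(\rho,\rho')$. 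This reduces everything to the interior of $B_\rho$, where H4 gives $\det D^2 u \sim d_{\p\Om}^\alpha \sim x_n^\alpha$.

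The core of the argument is an induction on the scale. Fix the family of rescalings that maps $\mathcal E_h$ to $\mathcal E_1$: if $\mathcal E_h = D_h \mathcal E_1$ where $D_h x = (h^{1/2}x', h^{1/(2+\alpha)}x_n)$, set $u_h(x) := \frac{1}{h} u(A_h D_h x)$ for the sliding $A_h$ produced at step $h$. Each $u_h$ solves a Monge--Ampere equation of the same type (the right-hand side scales correctly because $d_{\p\Om}^\alpha$ is homogeneous of the right degree under $D_h$, and slidings preserve $\det D^2$), with boundary data that is $\eps_0$-close to $\tfrac12|x'|^2$ on a large ball. One then argues by compactness: if the claim failed, there would be a sequence $u_{h_j}$ with boundary data converging to $\tfrac12|x'|^2$, $\det D^2 u_{h_j} \to x_n^\alpha$, and sections refusing to improve toward ellipsoids, yet by Theorem~\ref{T02} (the Liouville theorem, proved in Section~\ref{s8}) the limit must be $\tfrac12|x'|^2 + \tfrac{x_n^{2+\alpha}}{(1+\alpha)(2+\alpha)}$ modulo a sliding and an additive $bx_n$. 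This forces an improvement of the sliding at the next dyadic scale, $|\tau_{h/2}-\tau_h|\le C\theta^j$ for some $\theta<1$, so the slidings $A_h$ converge and the sections $S_h$ converge in shape to $A\mathcal E_h$, giving the two-sided inclusion with a universal $k$.

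The main obstacle is precisely the passage to the limit and the application of the Liouville theorem: one must show that the rescaled solutions $u_{h_j}$ enjoy enough compactness (uniform strict convexity / $C^{1,\gamma}$ estimates) up to the flat boundary, despite the right-hand side degenerating like $x_n^\alpha$, so that a subsequence converges locally uniformly together with its gradients to a global solution on $\R^n_+$ with the required growth bound $u_\infty = O(|x|^{3+\alpha-\eps})$ at infinity. Controlling this growth is the delicate point: it comes from the rough inclusion $S_h\subset C\mathcal E_{Ch}$ iterated over scales, which must be shown to survive the rescaling. Granting these ingredients, Theorem~\ref{T02} closes the loop and the induction runs.

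The ancillary facts I would assume from earlier in the paper (or prove in Section~\ref{s3}--\ref{s7}) are: (i) the rough comparison $S_h \approx \mathcal E_h$ and the localization of sections inside $B_{\rho/2}$; (ii) interior Pogorelov-type and boundary $C^{1,\gamma}$ estimates for the degenerate equation, uniform under the rescaling; and (iii) the Liouville theorem~\ref{T02}. With these, the dyadic iteration together with a standard summation of the sliding increments yields $Ax = x+\tau x_n$ with $|\tau|\le C(\rho,\rho')$ and the stated inclusions for all $h<c(\rho,\rho')$.
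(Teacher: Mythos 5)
Your proposal has a fundamental circularity: you invoke Theorem~\ref{T02} (the Liouville theorem) to close the compactness argument, but in the paper Theorem~\ref{T02} is proved \emph{after} and \emph{using} Theorem~\ref{T1}. The proof of Theorem~\ref{T02} in Section~\ref{s8} begins ``From Theorem~\ref{T1} we know that after subtracting the tangent plane\ldots'' and also relies on Lemma~\ref{15}, which is itself a corollary of Theorem~\ref{T3} --- the central estimate in the proof of Theorem~\ref{T1}. Likewise, the Liouville-type Proposition~\ref{p6} of Section~\ref{s7} explicitly says that for $\alpha>0$ it ``will make use of Theorem~\ref{T1}.'' So you cannot assume any of these Liouville statements as known; they sit downstream.

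There is a second, related confusion. The compactness-plus-Liouville iteration you describe, with the geometric improvement $|\tau_{h/2}-\tau_h|\le C\theta^j$, is essentially the mechanism behind the \emph{refined} estimate Theorem~\ref{T2}, not Theorem~\ref{T1}. For Theorem~\ref{T1} the paper establishes the decay $|\tau_h-\tau_{h/2}|\le Ch^{1/2-1/(2+\alpha)}$ by elementary means from the inclusions in~\eqref{8}, \emph{after} proving Lemma~\ref{l2}; no Liouville theorem is needed at that stage. The real work in the paper --- and the part your proposal glosses over --- is proving Lemma~\ref{l2} (that all tangential axes $d_i$ are $\le Ch^{1/2}$). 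This is done by a different, non-Liouville route: one rescales and passes to a compact class $\mathcal D_0^\mu$ solving the exact equation $\det D^2 u = x_n^\alpha$ (Section~\ref{s4}), then proves the key Theorem~\ref{T3} by analyzing the tangent cone $\gamma_u$ of $u$ at $0$ and showing it has quadratic growth near its minimum via two Pogorelov-type estimates (Section~\ref{s5}) applied to $u$ and to its Legendre transform $u^*$, with an induction on the number $k$ of ``flat'' boundary directions (Section~\ref{s6}). That tangent-cone/Pogorelov argument is the missing idea in your proposal; without it you have no way to show the sections cannot be elongated in a tangential direction, which is exactly what distinguishes Theorem~\ref{T1} from a soft barrier estimate.
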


\begin{rem}\label{r0}
The conclusion can be stated as $$c(|x'|^2 + x_n^{2+\alpha}) \le u(Ax) \le C(|x'|^2 + x_n^{2+\alpha}),$$
in a neighborhood of the origin where $c$, $C$ are universal constants. Equivalently we can say that there exists a sliding $A$ such that $A^{-1} S_h$ is equivalent to an ellipsoid of axes parallel to the coordinate axes and of lengths $h^{1/2},h^{1/2}, \ldots, h^{1/2}, h^{1/(2+\alpha)}$.
\end{rem}

\begin{rem}
If $\det D^2 u=d_{\p \Om}^\alpha$ and $\p \Omega\in C^{1,1}$ in a neighborhood of $0$ then Theorem \ref{T1} provides bounds by above and below for the tangential (to $\p \Om$) second derivatives in a neighborhood of $0$. The conclusion of Theorem \ref{T1} can be viewed as a boundary $C^{1,1}$ estimate by below written in terms of the sections $S_h$ rather than using second derivatives.
\end{rem}

The localization theorem for the nondegenerate case $\alpha=0$ holds if $\det D^2 u$ is only bounded away from $0$ and $\infty$ (see \cite{S1}). When $\alpha>0$ the hypothesis that $g=d_{\p \Om}^{-\alpha} \det D^2 u$ has small oscillation is in fact optimal. It is possible to construct a counterexample for Theorem \ref{T1} in two dimensions if we allow $g$ to be only bounded. However in this case we obtain a pointwise $C^{1,\gamma}$ estimate (see Proposition \ref{p0}.)

Our second theorem provides a pointwise $C^2$ estimate for solutions $u$ as above in the case when the boundary data is $C^2$.

\begin{thm}\label{T2}
Assume $u$ satisfies the hypotheses of Theorem \ref{T1} with $$\ph(x')=\frac 12 |x'|^2.$$
For any $\eta>0$ there exists $\eps_0$ depending on $\eta$, $\alpha$ and $n$, and a sliding $A$ along $x_n=0$ such that
$$(1-\eta) A \, \, S_h(U_0) \subset S_h(u) \subset (1+\eta) A \, \, S_h(U_0) $$
for all $h < c(\eta,\rho,\rho')$ where $U_0$ is the particular solution
$$U_0(x):=\frac 12 |x'|^2 + \frac{x_n^{2+\alpha}}{(1+\alpha)(2+\alpha)}.$$
\end{thm}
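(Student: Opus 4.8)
The plan is to bootstrap from the qualitative Localization Theorem \ref{T1} to the quantitative, almost-optimal statement of Theorem \ref{T2} by a compactness/contradiction argument in the spirit of \cite{S1} and \cite{S2}. We already know from Theorem \ref{T1} that, after a sliding $A$, the section $S_h(u)$ is comparable (with universal constants) to $\mathcal E_h=\{|x'|^2+x_n^{2+\alpha}<h\}$, which in turn is comparable to $S_h(U_0)$. So the content of Theorem \ref{T2} is that, if the oscillation $\eps_0$ of $g=d_{\p\Om}^{-\alpha}\det D^2u$ is taken small enough and $\ph=\frac12|x'|^2$ exactly, then this comparability improves to $(1-\eta)$, $(1+\eta)$ after choosing $A$ appropriately for the given $h$. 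The natural mechanism is: rescale $u$ at height $h$ using the affine maps dictated by Theorem \ref{T1}, extract a limit as $\eps_0\to 0$ (or as one iterates), identify the limit as a solution of the model problem $\det D^2 v = x_n^\alpha$ in a half-space with flat boundary data $\frac12|x'|^2$, and then invoke a Liouville-type classification to conclude the limit is exactly $U_0$ up to a sliding and an additive $bx_n$. Since the additive $bx_n$ and the sliding do not change the section shapes, the sections of the limit coincide with those of $U_0$, and a standard compactness argument then yields the $(1\pm\eta)$ inclusions for $u$ itself at all sufficiently small scales.

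Concretely, the steps I would carry out are as follows. First, fix $\eta>0$ and suppose for contradiction that for every $\eps_0=1/m\to 0$ there is a function $u_m$ satisfying H1--H4 (with the given $\ph$) for which the conclusion fails at some height $h_m$ no matter which sliding is used. Second, use Theorem \ref{T1} to find slidings $A_m$ and the normalizing linear maps $T_m$ (composition of $A_m^{-1}$ with the anisotropic dilation of factors $h_m^{-1/2}$ in $x'$ and $h_m^{-1/(2+\alpha)}$ in $x_n$) so that $\tilde u_m := h_m^{-1} u_m(A_m T_m^{-1} \,\cdot\,)$ are convex functions defined on normalized domains $\tilde\Om_m$ with $\tilde u_m \sim |x'|^2+x_n^{2+\alpha}$, $\det D^2 \tilde u_m = g_m \, d_{\p\tilde\Om_m}^\alpha$ with $g_m\to g(0)$ (we may normalize $g(0)=1$), and boundary values converging to $\frac12|x'|^2$. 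Third, apply the compactness results for solutions with uniform section bounds (as developed in \cite{S1}; one needs interior $C^{1,1}$-type control away from the degeneracy set and the fact that the domains $\tilde\Om_m$ converge to the half-space, which follows from the $C^{1,1}$ interior tangent ball hypothesis after rescaling) to extract a subsequential limit $\tilde u_\infty$ solving $\det D^2 \tilde u_\infty = x_n^\alpha$ in $\R^n_+$ with $\tilde u_\infty(x',0)=\frac12|x'|^2$ and with at most quadratic-in-$x'$ and $x_n^{2+\alpha}$ growth. Fourth, invoke the Liouville theorem (this is essentially Theorem \ref{T02}, or more precisely the version proved in Section \ref{s7} from which Theorem \ref{T2} is said to follow) to conclude $\tilde u_\infty(Ax)=bx_n+U_0(x)$ for some sliding $A$ and constant $b$. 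Fifth, observe that $S_1(\tilde u_\infty)$ and $S_1(U_0)$ agree up to a sliding, so for $m$ large $S_1(\tilde u_m)$ is within $\eta$ of a sliding of $S_1(U_0)$; translating back through $T_m$ this says precisely that $S_{h_m}(u_m)$ is squeezed between $(1-\eta)$ and $(1+\eta)$ dilates of a sliding of $S_{h_m}(U_0)$, contradicting the choice of $u_m$.

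The main obstacle, I expect, is the compactness step together with the identification of the limit equation: one must ensure that the normalized solutions $\tilde u_m$ do not degenerate in an uncontrolled way near $\{x_n=0\}$, i.e. that the limit genuinely solves $\det D^2 v=x_n^\alpha$ with the prescribed boundary trace and is not, say, identically a lower-dimensional object or missing mass on the boundary. This requires the a priori estimates on boundary sections from Section \ref{s3}, the barrier/Pogorelov-type estimates of Section \ref{s5}, and careful use of the hypothesis that the density $g$ has \emph{small} oscillation (not merely bounds), which is what forces $g_m\to 1$ and hence pins down the constant in front of $x_n^{2+\alpha}$. A secondary subtlety is the precise bookkeeping of slidings: a sliding along $x_n=0$ commutes appropriately with the anisotropic dilations and with the passage $S_h\mapsto S_1$, and $\det D^2$ and the boundary data $\frac12|x'|^2$ are invariant under slidings, so the freedom in $A$ in the statement exactly matches the freedom in the Liouville classification; making this alignment rigorous is routine but must be done with care. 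Once the limit is identified as $bx_n+U_0$ modulo a sliding, the remainder is the standard soft argument converting $C^2$-type information about the limit into the quantitative inclusion at scale $h$.
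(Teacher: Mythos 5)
Your strategy matches the paper's: normalize via Theorem~\ref{T1} so that $c_0(|x'|^2+x_n^{2+\alpha})\le u\le C_0(|x'|^2+x_n^{2+\alpha})$, rescale by $u_h(x)=\tfrac 1h u(h^{1/2}x',h^{1/(2+\alpha)}x_n)$, take a compactness limit as $\eps_0\to 0$ and $h\to 0$ to a global convex solution of $\det D^2 v=x_n^\alpha$ on $\R^n_+$ with $v(x',0)=\tfrac12|x'|^2$, identify that limit, and transfer the section inclusions back; and you correctly note that one must appeal to the Section~\ref{s7} Liouville result (Proposition~\ref{p6}) rather than Theorem~\ref{T02}, whose proof in Section~\ref{s8} already invokes Theorem~\ref{T2} and would be circular.

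One small but useful clarification: because the growth bound \eqref{60.5} imposed after the Theorem~\ref{T1} normalization is scale-invariant (with the \emph{same} constants $c_0,C_0$ for every rescaling $u_h$), it already forces the additive linear term $b x_n$ to vanish and the sliding to be the identity in the limit; so Proposition~\ref{p6} gives $v=U_0$ exactly, not merely up to a sliding and a linear term. This is why the paper does not need the final ``$bx_n$ and slidings don't change section shapes'' step you include. You are also right that all the real content is in proving this constrained Liouville statement, and the proposal does not carry out that proof: the paper's Proposition~\ref{p6} is established in five steps using (i) interior regularity and the fact that tangential second derivatives $u_{ii}$ ($i<n$) are subsolutions of the linearized operator, (ii) concavity and H\"older regularity of $\psi(x')=u_n(x',0)$, (iii) the observation that $u_n/x_n^{1+\alpha}$ satisfies a linear uniformly elliptic equation, (iv) a careful normalization-at-$x_0$ argument bounding $u_n/x_n^{1+\alpha}\le\tfrac1{1+\alpha}$, and (v) a final comparison with an explicit supersolution. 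Those ingredients are the actual substance behind the step you label ``invoke the Liouville theorem.''
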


\begin{rem}
In both Theorem \ref{T1} and \ref{T2} the first inequality of hypothesis H4 can be relaxed to
$$(1-\eps_0) \left [\left (x_n-\frac 1 \rho |x'|^2 \right )^+ \right ]^\alpha \le \det D^2 u \le (1+ \eps_0) \left (x_n + \frac 1 \rho |x'|^2 \right )^\alpha  \quad \quad \mbox{in $B_\rho \cap \Omega$}$$
or in other words we can replace $d_{\p \Om}$ by the distances to the exterior respectively interior tangent ball of radius $\rho$ at the origin. In fact in our proof we just use the inequality above instead of the first part of H4.
\end{rem}

Finally we also state a version of Theorem \ref{T2} in the case when the data is $C^2$ in a neighborhood of $0$.

\begin{thm}\label{T2.1} Let $\p \Omega \in C^2$ in $B_\rho$, and $u\in C(\ov \Om)$ convex such that
$$u(0)=0, \quad \nabla u(0)=0, \quad u=\ph(x') \quad \mbox{on $\p \Om \cap B_\rho$,}$$
with $$\ph \in C^2(B_\rho'), \quad \rho' I \le D_{x'}^2 \ph(0) \le \frac{1}{\rho'} \, I,$$ and $u \ge \rho'$ on $\p \Om \setminus B_\rho$.
Assume $$ \det D^2 u= g \, d_{\p \Om}^\alpha \quad \mbox{in} \quad \Om \cap B_\rho, \quad \quad \det D^2 u \le \frac{1}{\rho'} \quad \mbox{in} \quad \Om \setminus B_\rho $$
with $$g \in C^\beta(\ov \Om \cap B_\rho), \quad \quad \|g\|_{C^\beta} \le \frac{1}{\rho'}, \quad \rho'\le g(0) \le \frac{1}{\rho'},$$
for some $\beta>0$ small. Then $$u \in C^2 (\ov \Om \cap B_\delta)$$ with $\delta$ and the modulus of continuity of $D^2u$ depending on $n$, $\alpha$, $\beta$, $\rho$, $\rho'$ and the $C^2$ modulus of continuity of $\ph$ and $\p \Om$.
\end{thm}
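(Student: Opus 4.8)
The plan is to deduce the interior-up-to-the-boundary $C^2$ regularity from the pointwise $C^2$ estimate of Theorem \ref{T2} (equivalently Theorem \ref{T01}), applied not just at $0$ but at every boundary point near $0$, together with a standard interior $C^2$ estimate of Caffarelli type in the nondegenerate interior region. First I would verify that at every $y\in\p\Omega\cap B_{\delta_0}$ the hypotheses of Theorem \ref{T2} (i.e.\ H1--H4 after an affine normalization) hold with a fixed small $\eps_0$: since $\p\Omega\in C^2$ in $B_\rho$ it has interior and exterior tangent balls of a uniform radius at each such $y$, so H1 holds; since $\ph\in C^2$ with $D^2_{x'}\ph(0)$ pinched between $\rho' I$ and $\rho'^{-1}I$, the boundary data at $y$ (after subtracting the supporting plane of $u$ at $y$) separates quadratically with constants close to those at $0$, giving H2 and H3 on a uniform scale; and since $g\in C^\beta$ with $g(0)\in[\rho',1/\rho']$, the ratio $d_{\p\Omega}^{-\alpha}\det D^2u = g$ has oscillation $\le C\delta_0^\beta$ on $B_{\delta_0}$, which is $\le \eps_0$ once $\delta_0$ is small. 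One technical point: Theorem \ref{T2} is stated with $\ph(x')=\tfrac12|x'|^2$, so at each $y$ I must first apply the sliding and the linear change of variables that straightens $\p\Omega$ at $y$ and normalizes $D^2_{\mathrm{tan}}\ph(y)$ to the identity, absorbing the error terms into $\eps_0$; this is the content of the remark following Theorem \ref{T2} allowing $d_{\p\Omega}$ to be replaced by the distance to the osculating balls.

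Granting this, Theorem \ref{T2} (via its pointwise corollary, Theorem \ref{T01}) gives, at every $y\in\p\Omega\cap B_{\delta_0}$, a sliding $A_y$ and quadratic polynomial $Q_y$ with
$$u(A_y(y+z)) = u(y)+\nabla u(y)\cdot z + Q_y(z') + a_y\, z_n^{2+\alpha} + o(|z'|^2+z_n^{2+\alpha}),$$
with $Q_y$ the (normalized) quadratic part of $\ph$ at $y$ and $a_y\in[c,C]$ universal. The second step is to upgrade the $o(\cdot)$ to a uniform modulus: this requires re-examining the proof of Theorem \ref{T2} to see that the rate of convergence of the rescaled sections to the model ellipsoids depends only on the quantitative hypotheses (the $C^\beta$ norm of $g$, the $C^2$ moduli of $\ph$ and $\p\Omega$, and $\rho,\rho'$), which it does because the argument is by compactness and the limiting problem is rigid by the Liouville theorem of Section \ref{s7}; hence the modulus $\omega$ in $o(\cdot)=|z|^2\omega(|z|)$ can be taken independent of $y$. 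This gives $u\in C^2$ at each boundary point of $B_{\delta_0}$ with a uniform modulus of continuity of the boundary Hessians, hence $D^2u$ extends continuously to $\ov\Omega\cap\p\Omega\cap B_{\delta_0}$.

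The third step handles interior points $x\in\Omega\cap B_{\delta_1}$. Here I distinguish two regimes according to $t:=d_{\p\Omega}(x)$. Since $g\,d_{\p\Omega}^\alpha$ is bounded above by $C t^\alpha$ and below by $c\,t^\alpha$ on the section $S_h(x)$ for $h\sim t^{2+\alpha}$ (using the localization theorem to control the shape of that section, which is comparable to $A_{\bar y}\mathcal E_h$ for $\bar y$ the closest boundary point), after the affine rescaling that normalizes $S_h(x)$ to a ball of radius $1$ the right-hand side becomes a function bounded between two positive universal constants with $C^\beta$ modulus controlled by the rescaling; Caffarelli's interior $C^{2,\beta}$ (or just $C^2$) estimate then gives a bound on $\|D^2u\|$ in the inner half of $S_h(x)$, with the estimate degenerating in the $e_n$ direction exactly like $t^\alpha$. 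Matching this interior estimate with the boundary expansion from Step 2 as $x\to\p\Omega$, the cross terms and tangential entries of $D^2u(x)$ converge to those of $Q_{\bar y}$ while the $nn$-entry behaves like $(2+\alpha)(1+\alpha)a_{\bar y}t^\alpha\to 0$; this shows $D^2u$ is continuous across the boundary and completes the proof on $B_\delta$ for $\delta=\min(\delta_0,\delta_1)$.

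The main obstacle is Step 2: extracting a \emph{uniform} (over $y$) quantitative rate from the compactness proof of Theorem \ref{T2}. The compactness argument as usually written produces a qualitative $o(\cdot)$; to get a modulus independent of the base point one must check that all the normalizations are uniform (which follows from the $C^2$ control of $\p\Omega$ and $\ph$ and the $C^\beta$ control of $g$ established in Step 1) and that the only obstruction to a rate — nonuniqueness in the limiting Liouville problem — is ruled out by Theorem \ref{T02}/the Liouville theorem of Section \ref{s7}, up to the sliding ambiguity which is harmless here. A secondary, more routine difficulty is keeping track, through the various affine maps (the sliding $A_y$, the boundary-straightening map, and the section-normalizing maps), of how the $C^\beta$ norm of the right-hand side and the $C^2$ data transform, so that the constants in the final modulus depend only on the quantities listed in the statement.
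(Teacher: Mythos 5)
Your overall plan --- verify that H1--H4 hold uniformly at nearby boundary points, then transfer the boundary pointwise estimate into the interior via Caffarelli-type interior $C^{2,\beta}$ estimates on sections --- is the same strategy the paper uses. Step 1 is essentially Lemma~\ref{l11} and the discussion following it, and Step 3 matches the paper's use of interior estimates on the ``tangent'' sections. So the proposal is broadly correct and close to the paper in structure.

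Where you diverge is in Step 2, and there you have created a problem for yourself that the paper avoids. You propose to start from the pointwise $C^2$ expansion of Theorem~\ref{T01} (the $o(|z'|^2+z_n^{2+\alpha})$ statement) and then \emph{upgrade} the $o(\cdot)$ to a modulus that is uniform in the base point $y$ by ``re-examining the compactness proof.'' This is both unnecessary and the shakiest part of your argument, because the $o(\cdot)$ form has already thrown away the uniformity you need. The paper instead works with Theorem~\ref{T2} in the form in which it is actually stated: for each $\eta>0$ there is a scale $c(\eta,\rho,\rho')$, depending only on the listed quantities and not on the base point, such that $(1-\eta)A\,S_h(U_0)\subset S_h(u)\subset(1+\eta)A\,S_h(U_0)$ for $h\le c(\eta,\rho,\rho')$. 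No ``upgrading'' is needed; the uniformity is already in the theorem. The paper then passes from this section inclusion to control of $D^2u$ in one clean step: it takes the interior section $S_t(x_t)=\{u< h^{\frac{1+\alpha}{2+\alpha}}x_n\}$ tangent to $\{x_n=0\}$, observes that in the cone $\{|x'|\le C|x_n|\}$ the ratio $d_{\p\Omega}/x_n$ is Lipschitz so the equation reads $\det D^2u=\bar g\,x_n^\alpha$ with $\|\bar g\|_{C^\beta}\le\bar C$, rescales, and applies the interior $C^{2,\beta}$ Monge--Amp\`ere estimate to get the quantitative bound $\|D^2u\|_{C^\beta(S_{t/4}(x_t))}\le \bar C h^{-\beta/2}$ and $\|D^2u-D^2u(0)\|\le\bar C\eta$ in $S_{t/4}(x_t)$. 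The modulus of continuity of $D^2u$ up to the boundary is then obtained by noting that the analogous tangent sections at nearby boundary points $x^*\in\p\Om\cap S_{ch}$ overlap with $S_{t/4}(x_t)$, forcing $\|D^2u(x^*)-D^2u(0)\|\le\bar C\eta$. In short: replace your Step~2 by a direct appeal to the quantitative Theorem~\ref{T2}, and route all the $D^2u$ information through the interior estimate on the tangent sections; the ``rate from compactness'' you were worried about never has to be made explicit.

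One further small point you should not skip: before applying Theorem~\ref{T2} at a nearby $y\in\p\Om$, you must know $|\nabla u(y)|$ is small (so that subtracting the tangent plane at $y$ keeps the data in the class H1--H4 with a small $\eps_0$). This is the content of Lemma~\ref{l11}, which proves it by rescaling $u$ in a fixed section at the origin and building an explicit barrier; it is not quite automatic from the $C^2$ assumptions on $\p\Omega$ and $\ph$ alone, since one needs quantitative control of $u$ in a full neighborhood, not just on $\p\Omega$.
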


\section{Preliminaries and rescaling}\label{s3}

In this section we use rescaling arguments and reduce the proof of Theorem \ref{T1} to the Proposition \ref{p2} below.

First we show that $|S_h|^2 d_h^\alpha \sim h^n$ where $d_h$ is the $e_n$ coordinate of the center of mass $x^*_h$ of $S_h$. We can think of $d_h$ also as a quantity that represents roughly the height of $S_h$ in the $x_n$ direction. In the next proposition we prove that after using a sliding $A_h$ depending on $h$ we may normalize $S_h$ such that it has its center of mass on the $x_n$-axis and the corresponding normalized function $\tilde u$ satisfies essentially the same hypotheses as $u$.

\begin{prop}\label{p1}
Assume $u$ satisfies the hypotheses of Theorem \ref{T1}. Then for all $h \le c(\rho,\rho', \eps_0)$ there exists a sliding along $x_n=0$ $$A_h=x-\tau_h x_n, \quad \quad \tau_h \cdot e_n=0, \quad |\tau_h| \le C(\rho,\rho', \eps_0)h^{-1/4},$$
such that the rescaled function $$\tilde u(A_h x)=u(x)$$ satisfies in $$\tilde S_h:=A_h S_h=\{ \tilde u <h\}$$ the following:

1) the center of mass $\tilde x^*_h$ of $\tilde S_h$ lies on the $x_n$ axis i.e. $\tilde x^*_h=d_h e_n.$

2) $$c_0 h^n \le |S_h|^2 d_h^ \alpha \le C_0 h^n,$$ with $c_0$, $C_0$ universal.
Also, after performing a rotation of the $x_1$,..,$x_{n-1}$ variables we can write
$$ \tilde x^*_h + c_0 D_h B_1 \subset \tilde S_h \subset C_0 D_h B_1,$$
where $$D_h:=diag(d_1,d_2,..,d_{n-1},d_n)$$ is a diagonal matrix that satisfies
\begin{equation}\label{dn}
\left(\prod_1^{n-1}d_i^2\right) \, d_n^{2+\alpha}=h^n.
\end{equation}

3) $$\tilde G_h:= \p \tilde S_h \cap \{ \tilde u < h\} \subset \p \tilde \Om_h$$ is a graph i.e $$\tilde G_h=(x', g_h(x')) \quad \quad \mbox{with} \quad g_h(x') \le \frac 2 \rho |x'|^2,$$
and the function $\tilde u$ satisfies on $\tilde G_h$ $$(1-2 \eps_0) \ph(x') \le \tilde u(x) \le (1+2 \eps_0) \ph(x').$$
Moreover $\tilde u$ satisfies in $\tilde S_h$
$$(1-2\eps_0) \left (x_n-\frac 4 \rho |x'|^2 \right )^\alpha \le \det D^2 \tilde u \le (1+ 2 \eps_0) \left (x_n + \frac 4 \rho |x'|^2 \right )^\alpha.$$

\end{prop}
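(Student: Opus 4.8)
The plan is to construct the sliding $A_h$ by a first-moment (center of mass) normalization, and then to track how the various hypotheses of Theorem \ref{T1} transform under this affine change. I would begin with part 2 in the weak form $|S_h|^2 d_h^\alpha \sim h^n$, since this is the structural fact that controls everything else. The lower bound $|S_h|^2 d_h^\alpha \ge c_0 h^n$ should come from comparing $u$ with an explicit paraboloid-type barrier: because $d_{\p\Om}^\alpha \le (1+\eps_0) (x_n + \rho^{-1}|x'|^2)^\alpha$ and $u$ separates quadratically like $\ph$ on $\p\Om$, one builds a convex supersolution on $S_h$ of the form (roughly) $\frac12|x'|^2 + c\, x_n^{2+\alpha}$ suitably affinely rescaled to the size of $S_h$, and the comparison principle plus the product-of-eigenvalues computation forces $S_h$ to be at least as large as the corresponding ellipsoid. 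The upper bound $|S_h|^2 d_h^\alpha \le C_0 h^n$ is the John-ellipsoid direction: inscribe and circumscribe $S_h$ between concentric ellipsoids (dilated by $n$), use convexity of $u$ together with $\nabla u(0)=0$ to see that the maximum of $u-\ell$ on a double-size ellipsoid is comparable to $h$, and then the lower bound $\det D^2 u \ge (1-\eps_0)(x_n - \rho^{-1}|x'|^2)_+^\alpha$ applied on the inner ellipsoid gives that $S_h$ cannot be too large. The parameter $d_h$ (the $e_n$-coordinate of the center of mass) enters because, by H1, $S_h$ lies in $\{x_n>0\}$ with an interior tangent ball, so the part of $S_h$ away from $\p\Om$ sits at heights comparable to $d_h$, and the $d_{\p\Om}^\alpha$ weight there is $\sim d_h^\alpha$.

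Next I would produce the sliding itself. Let $x_h^* = (z', d_h)$ be the center of mass of $S_h$; since every sliding fixes $x_n=0$ and acts as a translation by $d_h \tau$ on the slice $\{x_n = d_h\}$, choosing $\tau_h := z'/d_h$ gives a sliding $A_h x = x - \tau_h x_n$ that moves $x_h^*$ onto the $x_n$-axis, so $\tilde x_h^* = d_h e_n$ as required in part 1. The size bound $|\tau_h| \le C(\rho,\rho',\eps_0) h^{-1/4}$ is obtained by estimating $|z'|$ and $d_h$ separately: $S_h \subset B_{C}$ with $C$ controlled, $S_h \subset \{x_n \le C(\rho)\sqrt h\}$ from H1 and the interior ball (heights of a section of a function vanishing to order one are $\lesssim \sqrt h$... but here we only have $\nabla u(0)=0$ in the weak sense of H2, so more carefully $d_h \gtrsim$ some power of $h$ from part 2 and the containment of $S_h$ in a fixed ball), and dividing yields the $h^{-1/4}$-type blow-up — the exact exponent is not essential, only that it is a negative power of $h$. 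Then define $\tilde u$ by $\tilde u(A_h x) = u(x)$; since $A_h$ has determinant $1$, $\det D^2 \tilde u = \det D^2 u \circ A_h^{-1}$, so the Monge–Ampère hypothesis H4 is preserved with the quadratic error terms $\rho^{-1}|x'|^2$ only mildly worsened (to $4\rho^{-1}|x'|^2$, say), because under the sliding $|x'|$ is unchanged and $x_n$ changes by at most $|\tau_h|\,|x'|$, and on the region where the $d_{\p\Om}^\alpha$ bound is used $|x'|$ is small relative to $x_n$; this is where the constant $2\eps_0$ (rather than $\eps_0$) appears. The boundary graph $\tilde G_h$ and the bound $g_h(x') \le \tfrac2\rho |x'|^2$ likewise follow from the interior/exterior tangent ball condition H1 transported by $A_h$, and $(1-2\eps_0)\ph \le \tilde u \le (1+2\eps_0)\ph$ on $\tilde G_h$ from H3, absorbing the sliding distortion into the doubled $\eps_0$.

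Finally, part 2's geometric statement — the diagonal normalization $\tilde x_h^* + c_0 D_h B_1 \subset \tilde S_h \subset C_0 D_h B_1$ with $(\prod_1^{n-1} d_i^2)\, d_n^{2+\alpha} = h^n$ — comes from applying John's lemma to the centered convex body $\tilde S_h - \tilde x_h^*$, rotating the $x'$-variables to diagonalize the relevant part of the John ellipsoid, and then identifying the last semi-axis with $d_n \sim d_h$ (the two are comparable by construction, since $\tilde x_h^*$ lies on the $x_n$-axis at height $d_h$ and $\tilde S_h$ is squeezed between multiples of $\{x_n = g_h(x')\}$ and height $\sim d_h$); the normalization $(\prod d_i^2) d_n^{2+\alpha} = h^n$ is then just a renaming that rewrites $|S_h|^2 d_h^\alpha \sim h^n$ using $|S_h| \sim \prod_1^n d_i$ and $d_n \sim d_h$. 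The main obstacle I expect is the pair of inequalities $c_0 h^n \le |S_h|^2 d_h^\alpha \le C_0 h^n$: the degeneracy of the right-hand side $d_{\p\Om}^\alpha$ near $\p\Om$ means the usual nondegenerate argument (Aleksandrov estimate with a constant right-hand side) does not apply directly, and one must carefully localize — splitting $S_h$ into a part near $\p\Om$ where the weight is small and a bulk part at heights $\sim d_h$ — and build genuinely degenerate barriers of the type $|x'|^2 + x_n^{2+\alpha}$ whose Monge–Ampère measure matches the weight; keeping the geometric quantity $d_h$ cleanly related to both $|S_h|$ and the height of $S_h$, uniformly as $h\to 0$ and using only H1–H4, is the delicate bookkeeping.
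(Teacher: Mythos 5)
Your high-level plan matches the paper's, but there are two concrete pieces missing that prevent it from closing, and without them the stated bound $|\tau_h|\le C h^{-1/4}$ (which is genuinely needed in part 3) would not come out.

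First, you bound $|\tau_h|=|z'|/d_h$ by ``estimating $|z'|$ and $d_h$ separately,'' but a trivial bound $|z'|\le C$ combined with a lower bound $d_h\gtrsim h^{3/4}$ only gives $|\tau_h|\lesssim h^{-3/4}$, which is too weak: the estimate $|x-\tilde x|=|\tau_h x_n|\le |\tau_h|\,|x'|^2/\rho$ would then not be $o(|x'|^2)$ on $\p\Om\cap S_h$ (where $|x'|\lesssim h^{1/2}$), and the boundary statements in part 3 --- that $g_h(x')\le\frac2\rho|x'|^2$ and that $\varphi(x')$ changes only by a factor $1+O(\eps_0)$ under the sliding --- would fail. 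The missing ingredient is a coupled bound of the form $|z'|\lesssim d_h^{2/3}$, i.e. the center of mass cannot have a large $x'$-component unless its height $d_h$ is correspondingly large. In the paper this comes from an explicit global lower barrier $w_1(x)=c'\bar w(|x'|,C'x_n)$ with $\bar w(r,y)=r^2(1-(y r^{-3/2})^\gamma)^+$ (Lemma \ref{l1}), whose key consequence is the inclusion $S_h\subset\{|x'|\le C'h^{1/2}\}\cup\{x_n\ge c'|x'|^{3/2}\}$. You never build this barrier, and your appeal to ``H1 and the interior ball'' does not supply it: the relevant input is the quadratic growth of the boundary data (H3), not the geometry of $\p\Om$. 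Once you have that inclusion, $|z'|\lesssim d_h^{2/3}$ and then $|\tau_h|\lesssim d_h^{-1/3}\lesssim h^{-1/4}$ using $d_h\gtrsim h^{3/4}$.

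Second, the lower bound $d_h\gtrsim h^{3/4}$ is not ``from part 2 and the containment of $S_h$ in a fixed ball'': it is a separate contradiction argument. If $d_h\ll h^{3/4}$ then (using the first inclusion) $S_h\subset\{|x'|\lesssim h^{1/2}, x_n\lesssim h^{3/4}\}$, and one compares with a subsolution $w_2=c'h[(|x'|/h^{1/2})^2+(x_n/h^{3/4})^2]+tx_n$ whose Monge--Amp\`ere measure $\sim h^{-1/2}$ dominates the given bounded $\det D^2u$; the conclusion $u\ge w_2$ then contradicts $\nabla u(0)=0$. Likewise, your description of the lower bound $d_h^{2+\alpha}\prod d_i^2\ge c h^n$ as coming from a supersolution and comparison is off-track: the clean argument again goes by contradiction with a subsolution $w_3=ch[\sum(x_i/d_i)^2+(x_n/d_h)^2]+tx_n$ tailored to the section's own semi-axes, using $\det D^2 w_3\ge Cd_h^\alpha\ge\det D^2\tilde u$ and $w_3\le\tilde u$ on $\p\tilde S_h$, contradicting $\nabla\tilde u(0)=0$. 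A generic supersolution does not know the shape of $S_h$ and hence cannot pin down the balance among $d_1,\dots,d_{n-1},d_h$; it is the subsolution with the section's own aspect ratios, pitted against the vanishing gradient, that forces the lower volume bound.
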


For simplicity of notation in this section we denote shortly by $c'$, $C'$, $c_i'$, $C_i'$ various constants that depend on universal constants and $\rho$,$\rho'$ and $\eps_0$ (instead of $c(\rho,\rho', \eps_0)$ etc.) Also we use $c'$, $C'$ for constants that may change their value from line to line whenever there is no possibility of confusion.

First we construct an explicit barrier for $u$.

\begin{lem}\label{l1}
Let $$\bar w (r,y);= r^2 g(y r^{-\frac 32}) \quad \quad \mbox{with} \quad g(t)=(1-t^\gamma)^+, \quad t \ge 0,$$
for some $\gamma>0$ small depending only on $n$. Then the function
$$w_1(x',x_n):=c'\bar w (|x'|,C'x_n),$$
is a lower barrier for $u$ provided that $c'$ (small), $C'$ (large) are appropriate constants depending on $n$, $\mu$, $\rho$, $\rho'$.
\end{lem}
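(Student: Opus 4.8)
The plan is to verify that $w_1$ is an admissible lower barrier by checking three things: that $\det D^2 w_1$ is large enough (at least as big as the upper bound $(1+\eps_0)d_{\p\Om}^\alpha \le C' x_n^\alpha$ for $\det D^2 u$) wherever $w_1>0$, that $w_1 \le u$ on the relevant part of the boundary of $\{w_1>0\}$, and that the tangent plane of $w_1$ at the origin lies below that of $u$ (which is forced by H2). Then the comparison principle for Monge-Ampère gives $w_1 \le u$ in $\{w_1>0\}$.

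First I would compute the Hessian of $\bar w(r,y)=r^2 g(yr^{-3/2})$ with $g(t)=(1-t^\gamma)^+$, treating $(r,y)$ as two variables and remembering that in $\R^n$ the variable $r=|x'|$ carries multiplicity $n-2$ (i.e.\ there are $n-2$ ``angular'' eigenvalues equal to $\bar w_r/r = r g - \tfrac32 y^{?}\dots$ times the appropriate factor). The key structural point is the scaling: $\bar w$ is designed so that $\{\bar w>0\}=\{y<r^{3/2}\}$, matching the shape $\{x_n \lesssim |x'|^{3/2}\}$, and so that along this region $w_1$ vanishes to the right order. A direct but routine computation shows that in the region $\{w_1>0\}$ one has $\det D^2 w_1 \sim (C')^{?}(c')^{n} \, x_n^{\alpha}$ up to constants — actually one should get a factor behaving like $x_n^\alpha$ (or a fixed positive constant, which is even better since near the origin $x_n^\alpha$ is small), because the degeneracy of $g$ at $t=1$ is tuned with the small exponent $\gamma$. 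By choosing $c'$ small and $C'$ large appropriately in terms of $n,\mu,\rho,\rho'$, the product $\det D^2 w_1$ can be made $\ge C'x_n^\alpha \ge \det D^2 u$ throughout $\{w_1>0\}$, while simultaneously keeping $w_1$ small enough on $\p\Om\cap B_{\rho/2}$ that $w_1\le (1-\eps_0)\ph(x')\le u$ there (using H3), and $w_1 < \rho'$ on the rest of $\p\Om\cap\{x_n\le\rho\}$ so that again $w_1 \le u$ by the other part of H3. One also checks $\{w_1>0\}\cap\ov\Om$ is compactly contained where needed and that on $\p\{w_1>0\}$ inside $\Om$ one has $w_1=0\le u$.

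For the comparison at the origin: since $g(0)=1$, $\bar w(r,0)=r^2$, so $w_1$ has a corner (cone-like behavior) in the $x_n$-direction at $0$ but its tangent plane in the $x'$ directions is the zero plane, and H2 says no plane $x_{n+1}=tx_n$ with $t>0$ supports $u$ at $0$; combined with $w_1 \le$ (something like $c'|x'|^2$) near $0$ this lets the comparison principle run on $\{w_1>0\}$, whose boundary splits into the piece on $\p\Om$ (handled above) and the piece $\{w_1=0\}$ where trivially $w_1\le u$.

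The main obstacle I expect is the Hessian computation for the degenerate profile $g(t)=(1-t^\gamma)^+$: one must confirm that, despite the singular behavior of the derivatives of $g$ as $t\to 1^-$, the eigenvalues of $D^2 w_1$ stay nonnegative (so $w_1$ is genuinely convex and an admissible subsolution) and that their product is bounded below by the right power of $x_n$ uniformly up to the free boundary $\{w_1=0\}$. This is exactly where the smallness of $\gamma$ (depending only on $n$) is used — it guarantees the ``angular'' eigenvalues of order $g \approx 1$ dominate enough factors while the one degenerate direction contributes the $x_n^\alpha$-type vanishing, so that no eigenvalue goes negative and the determinant does not degenerate faster than allowed. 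Everything else — the boundary comparisons via H3, the tangent-plane condition via H2 — is a routine matter of fixing the constants $c'$ (small) and $C'$ (large) in the stated order.
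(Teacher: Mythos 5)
Your overall strategy -- build a barrier with controlled Monge--Ampère measure, check the boundary inequality via H3, and apply the comparison principle on $\{w_1>0\}$ -- matches the paper's. But the crucial determinant bound in your write-up is off-target, and it would actually block the argument. You frame the goal as $\det D^2 w_1 \gtrsim x_n^\alpha$ so as to dominate the degenerate bound from H4, and you speculate that the tuning of $\gamma$ makes the ``degenerate direction'' of the barrier vanish like $x_n^\alpha$. Neither is what happens: a direct computation with $t=y r^{-3/2}$ gives
\[
\det D^2_{r,y}\bar w \ge c_0\,r^{-1} t^{2\gamma-2},\qquad \frac{\bar w_r}{r}\ge c_0\, t^\gamma,
\]
so that in $\R^n$ one gets $\det D^2 \bar w(|x'|,x_n)\ge c_1 |x'|^{-1} t^{\,n\gamma-2}\ge c_1|x'|^{-1}$ once $\gamma$ is small enough that $n\gamma\le 2$ (here $t\in(0,1)$). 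The barrier's Monge--Amp\`ere measure therefore does not degenerate at all; it is bounded below by a fixed positive constant on the bounded domain $B_{1/\rho}$, and even blows up as $|x'|\to 0$. The power $\alpha$ never enters.

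This matters, not just cosmetically. The positivity set $\{w_1>0\}=\{C'x_n<|x'|^{3/2}\}$ is \emph{not} contained in $B_\rho$, so on part of $\Om\cap\{x_n\le\rho\}$ you only have the coarse bound $\det D^2 u\le 1/\rho'$ from H4, and there the inequality $C'x_n^\alpha\ge \det D^2 u$ you propose can fail when $x_n$ is small. The paper sidesteps this entirely by choosing $C'$ so large that $\det D^2 w_1>1/\rho'$ on all of $B_{1/\rho}\cap\{w_1>0\}$, which dominates $\det D^2 u$ everywhere it is needed. In short: the correct target is the constant $1/\rho'$, not $x_n^\alpha$; the smallness of $\gamma$ is used only to make $t^{n\gamma-2}\ge 1$, not to produce any $x_n^\alpha$ behavior. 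You should also actually carry out the Hessian computation (including verifying positive definiteness from $\bar w_{rr}>0$ and $\det D^2_{r,y}\bar w>0$), since this is the substance of the lemma rather than a routine step.
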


\begin{proof}
Let $t=y r^{-\frac 32}$. Using that
$$\frac {dt}{dr}=-\frac 32 t r^{-1}, \quad \quad \frac{dt}{dy}=r^{-\frac 32},$$ we compute in the set where $\bar w>0$ (hence $t \in (0,1)$):

\begin{align*}
 \bar w_{yy} &=r^{-1} g''=r^{-1} \gamma (1-\gamma) t^{\gamma-2},\\
 \bar w_r &=r(2g-\frac 32 t g')=r(2g+\frac 32 \gamma t ^\gamma),\\
 \bar w_{ry} &=r^{-\frac 12}(2g-\frac 32 t g')'=r^{-\frac 12} \gamma t^{\gamma-1}(-2+ \frac 32 \gamma),\\
  \bar w_{rr} &=(2g-\frac 32 t g')-\frac 32 t (2g-\frac 32 t g')'=2g+\frac 32 \gamma(3-\frac 32 \gamma)t^\gamma.
\end{align*}

 We find

 \begin{align*}
 \det D^2_{r,y} \bar w & \ge r^{-1} \gamma^2 t^{2 \gamma-2}\left[(1-\gamma) \frac 32 (3-\frac 32 \gamma)-(2-\frac 32 \gamma)^2  \right]\\
 & \ge c_0 r^{-1} t^{2 \gamma-2},
 \end{align*}
 and
 $$\frac{\bar w_r}{r}  \ge c_0 t^ \gamma,$$
thus
$$\det D_x^2 \bar w (|x'|,x_n) \ge c_1 |x'|^{-1} t^{n \gamma -2} \ge c_1 |x'|^{-1}.$$
Now we choose $c'=c(\rho,\rho')$ small such that
$$c'|x'|^2 \le \frac 14 \mu |x'|^2 \le u \quad \mbox{on $\p \Om \cap B_{\rho/2}$,}$$
$$c'|x'|^2 \le \rho' \quad \mbox{on $\p \Om \cap \{x_n \le \rho\}$}$$
 and then $C'$ large such that
 $$\det D^2 w_1 >1/ \rho' \quad \quad \mbox{on $B_{1/\rho} \cap \{ w_1>0\} $.}$$
 Since $u \ge w_1 $ on $\p (\Om \cap \{x_n \le \rho\})$ and $\det D^2 w_1 > \det D^2 u$ on the set where $w_1>0$ we find $u \ge w_1$ in $\Om \cap \{x_n \le \rho\}$.

\end{proof}

{\it Proof of Proposition \ref{p1}.}

Since $u \ge w$ with $w$ as in Lemma \ref{l1} we have $S_h \subset \{ w<h\}$ thus
$$S_h \subset \left \{ c'|x'|^2 (1-C'x_n |x'|^{-\frac 32}) < h  \right \},$$
or
\begin{equation}\label{1}
S_h \subset \left \{ |x'| \le C_1' h^ {1/2} \right \} \cup \left \{ x_n \ge c_1' |x'|^{\frac 32} \right \}.
\end{equation}
Let $x^*$ denote the center of mass of $S_h$ and define $d_h$ as
$$d_h=x^* \cdot e_n.$$
We claim that
\begin{equation}\label{2}
d_h \ge h^{3/4} \quad \quad \mbox{for all $h<c_2'$.}
\end{equation}
Otherwise we have
$$S_h \subset \{ |x'| \le C' h^{1/2} \} \cap \{ x_n \le C' h^{3/4} \},$$
and we compare $u$ with
$$w_2:=c' h \left [ \left ( \frac{|x'|}{h^{1/2}} \right)^2 + \left (\frac{x_n}{h^{3/4}} \right)^2 \right] + t x_n,$$ with $c'$ sufficiently small, and some $t>0$ arbitrarily small. In $S_h$ we have $w_2 \le h$ and
$$\det D^2 w_2 = c' h^{-1/2} \ge \det D^2 u,$$
and on $\p \Om \cap \p S_h$ we use $x_n \le C'|x'|^2$ and obtain
$$w_2 \le c'(|x'|^2 + C'|x'|^2 x_n h^{-1/2}) + t C' |x'|^2 \le \frac \mu 2 |x'|^2 \le u.$$
In conclusion $w_2 \le u$ which contradicts $\nabla u(0)=0$ and the claim \eqref{2} is proved.

Next we show that for all small $h$ we also have the following lower bound
\begin{equation}\label{3}
 d_h \le C h ^ \frac{1}{2+\alpha}.
 \end{equation}
Assume by contradiction that $d_h \ge C h^\frac{1}{2+\alpha}$ for some large $C$ universal. Since $S_h$ contains the set $\p \Om \cap B_{ch^{1/2}}$ and the point
$$x^*_h=({x^*_h}' , d_h) \quad \quad \mbox{with} \quad |{x^*_h}' | \le C' d_h^{2/3},$$
it contains also the convex set generated by them. It is straightforward to check that this convex set contains an ellipsoid $E$ of volume
$$|E|=c(n) (ch^{1/2})^{n-1} Ch^\frac{1}{2+\alpha},$$
with $c(n)$ a small constant depending only on $n$, such that
$$E\subset \{x_n -\frac 1 \rho |x'|^2  \ge h^\frac{1}{2+\alpha}\} \cap B_{\rho/2},$$
if $h$ is small. Now we compare $u$ with the quadratic polynomial $P$ that solves
$$ \det D^2 P = \frac \mu 2 (h^\frac{1}{2+\alpha})^\alpha \le \det D^2 u, \quad \quad P=h \ge u \quad \mbox{on} \quad \p E$$
hence $P \ge u \ge 0$. Writing this inequality at the center of $E$ we obtain
$$h^n \ge c(n) \, |E|^2 \, \det D^2 P,$$ and we reach a contradiction if $C$ is sufficiently large, hence \eqref{3} is proved.

 From \eqref{3} we see that $S_h \subset B_{\rho/2}$ for all small $h$, and the argument above shows in fact that
 \begin{equation}\label{4}
 |S_h|^2 \, d_h^ \alpha \le C_0 h^n,
 \end{equation}
 for all small $h$. Indeed, by John's lemma we can choose the ellipsoid $E$ centered at $x^*$ with
 $$E-x^* \subset \frac 1 4 (S_h - x^*), \quad \quad |E| \ge c(n) |S_h|,$$
 and $$E \subset \{ x_n -\frac 1 \rho |x'|^2  \ge d_h /2 \},$$
 and then we easily obtain \eqref{4} as before.

  Now we let
  $$\tilde x= A_h x:=x-\tau_h x_n, \quad \quad \tau_h:=\frac{{x_h^*}'}{x_h^* \cdot e_n},$$
  and
  $$ \tilde u(\tilde x)=\tilde u(A_hx)=u(x).$$
  From \eqref{1}, \eqref{2} we find
  \begin{equation}\label{5}
  |\tau_h| \le C'\frac{d_h^{2/3}}{d_h} \le C'd_h^{-1/3} \le C' h^{-1/4},
  \end{equation}
  and $\tilde x^*_h$ lies on the $x_n$ axis by construction.

  We have $\tilde x_n=x_n$ and if $x \in \p \Om \cap S_h \subset B_{Ch^{1/2}}$ then
  $$|x-\tilde x|=|\tau_h x_n| \le C' h^{-1/4} |x'|^2 \le C'h^{1/4}|x'|.$$
  This easily implies that $\tilde G_h$ defined in Proposition \ref{p1} belongs to the graph of a function $g_h$ that satisfies $|g_h(x')| \le (2 / \rho) |x'|^2$. Since
  $$|\ph(x')-\ph(\tilde x')| \le C|x'| |x'-\tilde x'| \le C' h^{1/4} |x'|^2 \le \frac{\eps_0}{2} \varphi(x'),$$
on $\tilde G_h$ we have
$$(1-2\eps_0) \ph(\tilde x ') \le \tilde u(\tilde x) \le (1+2\eps_0) \ph(\tilde x').$$

Also if $x \in S_h$ then (see \eqref{5}, \eqref{3})
$$|x'|^2 \le 2 |\tilde x'|^2 + 2 |\tau_h|^2 x_n^2 \le 2|x'|^2 + C'd_h^{-2/3} d_h x_n \le 2|\tilde x'|^2 + \frac{\eps_0 \rho}{2} x_n  $$
thus
$$x_n+ \frac 1 \rho |x'|^2 \le (1+ \frac{\eps_0}{2})(\tilde x_n + \frac 4 \rho |\tilde x'|^2),$$
$$x_n- \frac 1 \rho |x'|^2 \ge (1 - \frac{\eps_0}{2})(\tilde x_n - \frac 4 \rho |\tilde x'|^2),$$
which imply the desired inequalities for $\det D^2 \tilde u$.

It remains to show part 2) of Proposition \ref{p1}. After a rotation of the first $n-1$ coordinates we may assume that $\tilde S_h \cap \{x_n=d_h\}$ is equivalent to an ellipsoid of axes $d_1\le d_2 \le \cdots \le d_{n-1}$ i.e.
$$\left \{ \sum_1^{n-1} (\frac {x_i}{d_i})^2 \le 1 \right \} \cap \{ x_n = d_h\} \subset \tilde S_h \cap \{x_n =d_n \} \subset \left \{ \sum_1^{n-1} (\frac {x_i}{d_i})^2 \le C(n) \right \},$$
with $C(n)$ a constant depending only on $n$. We find
$$S_h \subset \left \{ \sum_1^{n-1} (\frac {x_i}{d_i})^2 \le C(n) \right \} \cap \{ 0 \le x_n \le C(n) d_h \},$$
and also since $\tilde u \le c |x'|^2$ on $\tilde G_h$ we see that
\begin{equation}\label{6}
d_i \ge c_3 h^{1/2}.
\end{equation}
We claim that
\begin{equation}\label{7}
d_h^{2+\alpha}\prod_1^{n-1} d_i^2 \ge c_4 h^n.
\end{equation}
Otherwise, similarly as before we consider
$$w_3:=ch\left[\sum_1^{n-1} (\frac{x_i}{d_i})^2 + (\frac{x_n}{d_h})^2  \right] + t x_n,$$
with $c$ small, and obtain (provided that $c_4$ is chosen sufficiently small)
$$\det D^2 w_3 \ge c^n h^n (d_h^2 \Pi d_i^2)^{-1} \ge C d_h^ \alpha \ge \det D^2 \tilde u,$$
$$w_3 \le h =\tilde u \quad \mbox{on} \quad \p \tilde S_h \setminus \tilde G_h,$$
and moreover on $\tilde G_h$ we use \eqref{6} and obtain
$$w_3 \le c|x'|^2 + C h \frac{x_n}{d_h} + t x_n\le \frac{\mu}{4}|x'|^2 \le \tilde u.$$
This implies $\tilde u \ge w_3$ in $\tilde S_h$ and we contradict that $\nabla \tilde u(0)=0$, hence \eqref{7} is proved.

Now we define $d_n$ from $d_1$, .., $d_{n-1}$ by the equality \eqref{dn}, and \eqref{4}, \eqref{7} give
\begin{equation}\label{7.1}
cd_n \le d_h \le C d_n
\end{equation}
which proves part 2).

\qed

\begin{rem}\label{r1}
The set $\tilde S_h \cap \{x_n=d_h\}$ is just a translation of $S_h \cap \{x_n=d_h\}$, hence $d_1$, $d_2$,..,$d_{n-1}$ represent the length of the axes of an ellipsoid which is equivalent to $S_h \cap \{x_n=x^*_h \cdot e_n\}$.
\end{rem}

\begin{rem}
We can prove \eqref{7} without using the upper bound on $\ph(x')$. Precisely, if we assume that $\ph$ satisfies
$$\mu \mathcal N \le D_{x'}^2 \ph \le \mu^{-1} \mathcal N,$$
with $$\mathcal N =diag(a_1^2,\ldots, a_{n-1}^2), \quad \quad a_i \ge 1,$$ then \eqref{7} still holds. Indeed, now we have $d_i \ge c_3 h^{1/2}/a_i$ instead of \eqref{6} and then on $\tilde G_h$ we still satisfy
$$w_3 \le c a_i^2x_i^2 + C x_n h/d_h + t x_n \le \ph(x') \le \tilde u.$$
\end{rem}

We mention that in the beginning of the proof of Proposition \ref{p1} we obtained a pointwise $C^{1,1/3}$ estimate for solutions that grow quadratically away from their tangent plane and have bounded Monge-Ampere measure. We state this result below although it will not be used in the proof of Theorem \ref{T1}.

\begin{prop}\label{p0}
Assume $\Om$, $u$ satisfy hypotheses H1, H2 of Section \ref{s2}, and
$$\rho |x'|^2 \le u(x) \le \frac {1}{\rho} |x'|^2 \quad \mbox{on} \quad \p \Om, \quad \quad \det D^2u \le \frac {1}{\rho} \quad \mbox{in} \quad \Om.$$ Then
$$u(x) \le C'|x|^\frac 43  \quad \quad \mbox{in} \quad \Om \cap B_{c'}$$
with $C'$, $c'$ constants depending on $n$ and $\rho$.
\end{prop}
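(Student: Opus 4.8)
The plan is to read off the pointwise upper bound from the geometry of the sections $S_h$ of $u$ centered at the origin, using the lower barrier of Lemma \ref{l1} and the centroid estimate \eqref{2} established above — both of whose proofs use only $u\ge\rho|x'|^2$ on $\p\Om$, $\Om\subset B_{1/\rho}$, $\det D^2u\le 1/\rho$ and H1--H2, hence apply verbatim in the present setting. From $u\ge w_1$ one gets, as in \eqref{1},
\[
S_h\subset\{|x'|\le C'h^{1/2}\}\cup\{x_n\ge c'|x'|^{3/2}\}\qquad\text{for }h<c',
\]
and from \eqref{2} that the $e_n$-coordinate $d_h$ of the centroid $x_h^*$ of $S_h$ satisfies $d_h\ge h^{3/4}$ for all small $h$. (Recall the proof of the latter: if $d_h<h^{3/4}$, Minkowski's centroid inequality gives $\max_{S_h}x_n\le(n+1)d_h$, which with the display confines $S_h$ to the box $\{|x'|\le C'h^{1/2}\}\cap\{x_n\le C'h^{3/4}\}$; comparing $u$ with $w_2:=c'h(|x'|^2/h+x_n^2/h^{3/2})+tx_n$, whose Monge--Ampère measure exceeds $1/\rho\ge\det D^2u$ once $h$ is small and which lies below $u$ on $\p S_h$, forces $u\ge tx_n$ near $0$ for every small $t>0$, contradicting H2.)

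Next I would extract one good interior point. Since $x_h^*\in S_h$ and $d_h\ge h^{3/4}$, the segment $[0,x_h^*]\subset S_h$ meets the slab $\{x_n=h^{3/4}\}$ at a point $p_h$; as $p_h\in S_h$ has $p_h\cdot e_n=h^{3/4}$, the display above (enlarging $C'$ if necessary) forces $|p_h'|\le C'h^{1/2}$, and of course $u(p_h)<h$. On the boundary cap $D:=\p\Om\cap B_{(\rho h)^{1/2}}$ we have $u=\ph\le\rho^{-1}|x'|^2\le h$. The key geometric claim is that for a suitable small $c_0=c_0(n,\rho)$,
\[
B_{c_0h^{3/4}}\cap\ov\Om\ \subset\ \mathrm{conv}\big(D\cup\{p_h\}\big)\qquad\text{for }h<c_0 .
\]
Granting this, convexity of $u$ together with $u\le h$ on $D\cup\{p_h\}$ gives $u\le h$ on $B_{c_0h^{3/4}}\cap\ov\Om$, and then, for $x\in\Om\cap B_{c_0^{4/3}}$, the choice $h:=(2|x|/c_0)^{4/3}$ yields $u(x)\le h=(2/c_0)^{4/3}|x|^{4/3}$, which is the assertion.

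The inclusion itself is elementary: given $z\in B_{c_0h^{3/4}}\cap\ov\Om$ one has $0\le z_n\le c_0h^{3/4}$ and $z_n\ge\gamma(z')$, where $x_n=\gamma(x')\le\rho^{-1}|x'|^2$ describes $\p\Om$ near $0$; writing $z=(1-s)w+sp_h$ with $w=(w',\gamma(w'))$ and solving for $s$ from the $x_n$-component produces a root $s\in[0,c_0]$ together with $|w'|\le|z'|+s|p_h'|\le c_0h^{3/4}+c_0C'h^{1/2}\le(\rho h)^{1/2}$, so that $w\in D$, provided $c_0$ is small relative to the barrier constant $C'$ and $h$ is small. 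The step I expect to carry the real content is the centroid bound $d_h\ge h^{3/4}$ (precisely step \eqref{2}): it is the one place where one must rule out $S_h$ being too flat in the $x_n$-direction, and it is exactly what fails to improve to $C^{1,1}$, hence to $C^2$, when $\det D^2u$ is only bounded rather than comparable to $d_{\p\Om}^\alpha$. The convex-hull inclusion, though routine, must be arranged with $c_0$ small enough that the chords from $D$ to $p_h$ sweep out a full $h^{3/4}$-neighborhood of the origin in every direction and not merely a thin cone.
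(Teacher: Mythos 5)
Your argument is correct and follows essentially the same route as the paper: establish \eqref{1} and \eqref{2} under the weaker hypotheses (only the lower quadratic bound on $\p\Om$ and the upper bound on $\det D^2u$ are needed there), then observe that the convex hull of a point at height $\sim h^{3/4}$ in $S_h$ with the boundary cap $\p\Om\cap B_{c\sqrt{h}}$, being contained in $S_h$, swallows $\ov\Om\cap B_{c_1 h^{3/4}}$, and finally invert $h\mapsto h^{3/4}$. The paper uses $x_h^*$ directly where you introduce the intermediate point $p_h$ on $[0,x_h^*]$, and your convex-hull inclusion is spelled out where the paper leaves it as an assertion, but there is no substantive difference.
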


\begin{proof}
The section $S_h$ and its center of mass $x_h^*$ satisfy \eqref{1} and \eqref{2} since we only used the upper bound on $\det D^2 u$ and the quadratic bound by below for $u$ on $\p \Om$. From this we obtain that the convex hull generated by $x_h^*$ and $\p \Om \cap B_{c'h^{1/2}}$, which is included in $S_h$, contains $\ov \Om \cap B_{c'_1h^{3/4}}$ for some small $c_1'$, which proves the proposition.

\end{proof}

In order to prove Theorem \ref{T1} we need to show that the quantities $d_i$ are bounded by above by $C h^{1/2}$ for some $C$ universal. Precisely we prove the following lemma which will be completed in Section \ref{s6}.

\begin{lem}\label{l2}
Assume $u$ satisfies the hypotheses of Proposition \ref{p1} for some $\eps_0$ sufficiently small, universal. Then for all $h \le c(\rho,\rho')$ we have
$$\max_{1\le i \le n-1} \, \,  d_i \le C h^{1/2},$$
for some $C$ universal, with $d_i$ defined as in Proposition \ref{p1}.
\end{lem}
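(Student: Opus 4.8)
The plan is to argue by contradiction and compactness, in the spirit of the localization theorem in \cite{S1}. Suppose the conclusion fails: then there is a sequence of functions $u_k$ satisfying the hypotheses of Proposition \ref{p1} with $\eps_0 = \eps_0^{(k)} \to 0$, together with heights $h_k \to 0$, for which the largest axis $d_{n-1}^{(k)}$ of the normalized section $\tilde S_{h_k}(u_k)$ satisfies $d_{n-1}^{(k)} \ge M_k\, h_k^{1/2}$ with $M_k \to \infty$. After applying Proposition \ref{p1} we may work directly with the normalized functions $\tilde u_k$ on $\tilde S_{h_k}$, and then rescale by the diagonal matrix $D_{h_k} = \operatorname{diag}(d_1^{(k)},\dots,d_n^{(k)})$, setting $v_k(x) := h_k^{-1}\, \tilde u_k(D_{h_k} x)$. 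By part 2) of Proposition \ref{p1} the rescaled domains contain and are contained in fixed balls, $v_k < 1$ on these sections, and by \eqref{dn} the Monge--Ampere measure of $v_k$ is comparable to $d_{\p}^\alpha$ with the degeneracy now along the rescaled boundary graph, which flattens to $\{x_n = 0\}$ because $g_{h_k}(x') \le (2/\rho)|x'|^2$ forces a curvature of order $d_{n-1}^{(k)}{}^2 / d_n^{(k)} \to 0$ after rescaling. The key point of the contradiction setup is that since $d_{n-1}^{(k)}/h_k^{1/2} \to \infty$ while $d_1^{(k)} \ge c_3 h_k^{1/2}$, the ratio of axes degenerates, so the limiting profile is a convex function on a half-space that is independent of the $x_{n-1}$ direction (or more precisely whose section has become infinitely elongated), hence has a vanishing second derivative in a tangential direction.

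The second step is to pass to the limit. Using the boundary data control from part 3) of Proposition \ref{p1}, namely $(1-2\eps_0^{(k)})\ph \le \tilde u_k \le (1+2\eps_0^{(k)})\ph$ on $\tilde G_{h_k}$ together with $D^2_{x'}\ph \ge \mu I$, one gets uniform $C^{0,1}$ and nondegeneracy estimates for $v_k$ on compact subsets, so along a subsequence $v_k \to v_\infty$ locally uniformly, where $v_\infty$ is a convex function on (a limit of) the half-space $\{x_n > 0\}$ solving $\det D^2 v_\infty = c\, x_n^\alpha$ for a suitable constant $c$, with $v_\infty(x',0)$ a positive-definite quadratic form in $x'$ and $v_\infty(0)=0$, $\nabla v_\infty(0)=0$ (the last from H2, which survives the rescaling since the slidings $A_{h_k}$ only move vectors $\tau_h$ with $\tau_h \cdot e_n = 0$). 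The elongation of the axes forces $v_\infty$ to be degenerate: writing out the section geometry, the direction $e_{n-1}$ has become an exact direction of translation-invariance after rescaling, so $v_\infty$ is constant along $e_{n-1}$ — but then $\det D^2 v_\infty \equiv 0$, contradicting $\det D^2 v_\infty = c\, x_n^\alpha$ with $c > 0$ on $\{x_n > 0\}$.

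To make the last contradiction rigorous rather than heuristic one should not take $d_{n-1}^{(k)}$ literally to infinity inside a compact picture; instead the cleaner route, and the one I expect the paper takes, is to feed the blown-up solution into a Pogorelov-type interior/boundary second-derivative estimate (the estimates announced for Section \ref{s5}), applied to the pure tangential second derivative $\p_{ee} v_k$ for $e$ the offending near-tangential direction. Such an estimate would bound $\p_{ee} v_k$ from above by a universal constant on a fixed subdomain, which directly translates back (after undoing the $D_{h_k}$ rescaling) to $d_{n-1}^{(k)} \le C h_k^{1/2}$, contradicting $M_k \to \infty$. The main obstacle, as the introduction itself flags, is exactly that the tangential pure second derivatives are only \emph{subsolutions} of the linearized Monge--Ampere operator, so one cannot apply a maximum principle naively to bound them above; the resolution must come from the special structure of the degenerate equation — using the equation to convert the lower bound on the tangential derivatives (which follows from the quadratic separation of $\ph$) together with the Pogorelov estimates of Section \ref{s5} to close the loop. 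Hence the honest statement of the plan is: (i) reduce by Proposition \ref{p1} and rescale by $D_h$; (ii) establish compactness and identify the limit equation $\det D^2 v_\infty = c\,x_n^\alpha$ with quadratic boundary data on a half-space; (iii) invoke the Pogorelov-type bound of Section \ref{s5} on the rescaled solutions to get a universal upper bound on $\p_{ee} v_k$, which is the content of the lemma; the hard part is step (iii), and it is precisely why the lemma is stated here but its proof deferred to Section \ref{s6}.
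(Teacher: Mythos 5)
The high-level intuition (contradiction, $D_h$-rescaling, compactness to a half-space solution, then a Pogorelov input) is the correct orbit, but your proposed limit analysis is wrong, and the Pogorelov estimates enter the paper's proof in a structurally different way than you describe. Concretely, when $a_{n-1}^{(k)}=d_{n-1}^{(k)}/h_k^{1/2}\to\infty$, the limit $v_\infty$ is \emph{not} a finite, translation-invariant function with $\det D^2 v_\infty\equiv 0$; what happens is that the rescaled boundary data $\ph_v(x')=h^{-1}\ph(d_1x_1,\dots)$ has second derivative $\sim\mu a_{n-1}^2\to\infty$ in the $e_{n-1}$ direction, so the boundary values of the limit become discontinuous: $v_\infty=0$ at the origin but jumps to $1$ as soon as $x_{n-1}\neq 0$ on $\{x_n=0\}$. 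That is precisely the class $\mathcal D_0^\mu(a_1,\dots,a_l,\infty,\dots,\infty)$ introduced in Section \ref{s4}. There is nothing degenerate about such a limit (its Monge--Amp\`ere measure is exactly $x_n^\alpha$), so your quick contradiction evaporates, and a substantial new problem appears: one must understand the geometry of sections of a solution with a boundary singularity.

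The paper handles this via a layered reduction that your plan omits entirely. Lemma \ref{l3.1} converts Lemma \ref{l2} into a dyadic decay statement for the $a_i(h)$ (Proposition \ref{p2}), which by compactness reduces to a statement about the discontinuous class $\mathcal D_0^\mu$ (Proposition \ref{p3}), which in turn is proved by induction on $k$ using the key quantitative bound of Theorem \ref{T3}. The induction matters because some $a_i$ may stay bounded while others diverge, and the argument must separate the two groups by a sliding along the $y$-variable. Finally, the Pogorelov-type estimates of Section \ref{s5} are \emph{not} applied to $\p_{ee}v_k$: Theorem \ref{Po} is applied to rescalings of $u-cx_n$ to bound $D^2\gamma_u$ from \emph{above}, where $\gamma_u$ is the profile on $\{x_n=1\}$ of the tangent cone of the discontinuous limit solution at the origin, and Theorem \ref{abo} (a level-set curvature bound) is applied to the Legendre transform $u^*$ to bound $D^2\gamma_u$ from \emph{below}. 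The resulting quadratic pinch $c|x'-x_o'|^2\le\gamma_u\le C|x'-x_o'|^2$ then controls the shape of the sections via the relation between $S_t(\gamma_u)$ and the normalized diameter $\mathcal S_h'(u)$. Directly bounding $\p_{ee}v_k$ from above cannot succeed — $\p_{ee}v_k$ is a subsolution, as the introduction notes, and moreover a bound $\p_{ee}v_k\le C$ on the normalized picture is essentially automatic and does not translate back into $d_{n-1}\le Ch^{1/2}$; what is needed is a \emph{lower} bound on the tangential growth, which is exactly what the quadratic control of the tangent cone provides.
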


{\it Lemma \ref{l2} implies Theorem \ref{T1}}

From Lemma \ref{l2} and \eqref{6}, \eqref{dn} we find ($i \ne n$)
$$ch^{1/2} \le d_i \le Ch^{1/2}, \quad ch^\frac{1}{2+\alpha} \le d_n \le C h^\frac{1}{2+\alpha},$$
hence, by Proposition \ref{p1},
\begin{equation}\label{8}
\tilde x_h^* + c F_h B_1 \subset A_h S_h \subset C F_h B_1,
\end{equation}
with $$F_hx:=(h^\frac 12x',h^\frac{1}{2+\alpha}x_n).$$
Since $ \p \Om_h \cap B_{ch^{1/2}} \subset \tilde G_h \subset \tilde S_h = A_h S_h$ we see from the inclusion above that also
$$c F_h B_1 \cap A_h \ov \Om \subset A_h S_h \subset C F_h B_1.$$

Using in \eqref{8} that $S_{h/2} \subset S_h$ we find
$$F_h^{-1} A_h A_{h/2}^{-1} F_{h/2} B_1 \subset C B_1$$ which gives
$$|\tau_h - \tau_{h/2}| \le C_1 h^{\frac12-\frac{1}{2+\alpha}},$$
for all $h \le c(\rho,\rho')$. If we denote by $h_k=2^{-k}$ then, since $\alpha>0$, we obtain $\tau_{h_k} \to \tau_0$ and
$$|\tau_h-\tau_0| \le C_2 h^{\frac12-\frac{1}{2+\alpha}}, \quad \quad \mbox{for all $h=h_k \le c(\rho,\rho')$.}$$
This inequality implies
$$cB_1 \subset F_h^{-1} A_h A_0^{-1} F_h \, \, B_1 \subset C \, B_1,$$
hence we can replace $A_h$ with $A_0$ in the second inclusion above and obtain
$$k F_h B_1 \cap A_0 \ov \Om \subset A_0 S_h \subset k^{-1} F_h B_1,$$
for some small $k$ universal.

\qed

{\bf Normalized solutions.}

Next we ``normalize" $\tilde u$ in $\tilde S_h$ (or we may think we normalize $u$ in $S_h$) back to size 1 in such a way that it solves a similar equation. Precisely we define
\begin{equation}\label{v}
v(x):=\frac{1}{h} \tilde u(D_hx)=\frac {1}{h} \tilde u(d_1 x_1,\ldots,d_n x_n)
\end{equation}
with
$D_h$, $d_1, \ldots , d_n$ defined in Proposition \ref{p1}. Then $v$ is a continuous convex function function in $\ov \Om_v$ with $\Om_v:=D_h^{-1} \tilde \Om$ and
\begin{equation}\label{v1}
v(0)=0, \quad \quad v\ge 0, \quad \quad \nabla v(0)=0 \quad \quad \mbox{(in the sense of H2).}
\end{equation}
The section $S_1(v):=\{ v<1\}$ satisfies $S_1(v)=D_h^{-1} \tilde S_h$ thus
\begin{equation}\label{v2}
x^* + c B_1 \subset S_1(v) \subset C B_1, \quad \quad \mbox{for some point $x^*$.}
\end{equation}
We compute
$$ \det D^2 v(x) = h^{-n} (\det D_h)^2 \det D^2 \tilde u(D_hx)=d_n^{-\alpha} \det D^2 \tilde u (D_hx).$$
From \eqref{1}, \eqref{7.1} we know that for $i <n$ we have $d_i \le C' d_n^{2/3}$ hence
$$\left |\frac 4 \rho \sum_1^{n-1}(d_i x_i)^2 \right | \le C' d_n^{4/3} |x'|^2 \le \eps_0 d_n |x'|^2,$$
if $h <c'$. Using this inequality in Proposition \ref{p1} part 3) we obtain
$$(1-2\eps_0)[(x_n-\eps_0|x'|^2)^+]^\alpha \le d_n^{-\alpha} \det D^2 \tilde u(D_hx) \le (1+ 2 \eps_0)(x_n+\eps_0|x'|^2)^\alpha,$$
hence
\begin{equation}\label{v3}
 (1-2\eps_0)[(x_n-\eps_0 |x'|^2)^+]^\alpha \le \det D^2 v \le (1+ 2 \eps_0)(x_n+\eps_0|x'|^2)^\alpha  \quad \quad \mbox{in $S_1(v)$.}
\end{equation}

If we denote by $G_v$ the closed set $G_v:=\p \Om_v \cap \p S_1(v)$ we have that $G_v$ is the graph of a convex function $(x',g_v(x'))$ with
$$d_n g_v \le \frac2 \rho \sum d_i^2 x_i^2 \le \eps_0 d_n |x'|^2,$$
hence
\begin{equation}\label{v4}
0 \le g_v \le \eps_0 |x'|^2.
\end{equation}
We have $v=1$ on $\p S_1(v) \setminus G_v$, and on $G_v$ the function $v$ satisfies
\begin{equation}\label{v5}
(1-2\eps_0) \ph_v(x') \le v \le (1+2\eps_0) \ph_v(x'),
\end{equation}
with
$$\ph_v(x'):=\frac 1 h \ph(d_1x_1,\ldots, d_{n-1}x_{n-1}).$$
Notice that
$$\mu^{-1} \mathcal N \ge D^2_{x'} \ph_v \ge \mu \mathcal N,$$
with (see \eqref{6}) $$\mathcal N=diag(a_1^2,a_2^2,\ldots,a_{n-1}^2), \quad \quad a_i:=\frac{d_i}{h^{1/2}} \ge c.$$
We collect the properties \eqref{v1}-\eqref{v5} for $v$ into a formal definition below.

{\bf The class $\mathcal D_{\bar \mu}^\sigma$.}

Let $\bar \mu$, $\sigma$ be positive (small) fixed constants, and let $\bar \mu \le a_1 \le \cdots \le a_{n-1}$ be real numbers.

We say that $$v \in \mathcal D_{\bar \mu}^\sigma(a_1,\ldots,a_{n-1})$$ if $v$ is a continuous convex function defined on a convex set $\ov \Om$ such that

1)
$$0 \in \p \Om, \quad B_{\bar\mu}(x_0) \subset \Omega \subset B_{1/\bar \mu}^+ \quad \mbox{for some $x_0$,}$$
$$1 \ge v\ge 0, \quad v(0)=0 \quad \nabla u(0)=0,$$

2) in the interior of $\Om$ the function $v$ satisfies:
$$(1-\sigma)[(x_n-\sigma |x'|^2)^+]^\alpha \le \det D^2 v \le (1+ \sigma)(x_n+\sigma|x'|^2)^\alpha,$$

3) on $\p \Om$ the function $v$ satisfies:

\noindent there exists a closed set $G \subset \p \Om$ which is a graph $(x',g(x'))$ with
$$g(x') \le \sigma |x'|^2,$$ such that
$$v=1 \quad \mbox{on $\p \Om \setminus G$,}$$
and
$$(1-\sigma) \ph_v(x') \le v \le (1+\sigma) \ph_v(x') \quad \quad \mbox{on $G$}$$
for some function $\ph_v$ such that $$\bar \mu^{-1} \mathcal N \ge D^2_{x'} \ph_v \ge \bar \mu \mathcal N, \quad \quad \mbox{with} \quad \mathcal N=diag(a_1^2,a_2^2,\ldots,a_{n-1}^2).$$

\

In view of \eqref{v1}-\eqref{v5} and the definition above we may rephrase Proposition \ref{p1} as follows.

\begin{lem}\label{l3}
If $u$ satisfies the hypotheses of Proposition \ref{p1} and $v$ is the normalized solution of $u$ in $S_h$ given by \eqref{v}, and $h \le h_0:=c(\rho,\rho',\eps_0)$, then
$$v \in \mathcal D_{\bar \mu}^{2 \eps_0}(a_1,a_2,..,a_{n-1}),$$
for some $\bar \mu$ universal (depending on $n$, $\alpha$, $\mu$) and with $a_i = d_i h^{-\frac 12}$.
\end{lem}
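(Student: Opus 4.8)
The claim is essentially a dictionary entry: Lemma~\ref{l3} just repackages the estimates \eqref{v1}--\eqref{v5}, together with the lower bound \eqref{6} on the $d_i$, into the vocabulary of the class $\mathcal D_{\bar\mu}^\sigma$. So the plan is to go through conditions 1), 2), 3) of the definition one by one and match each to an inequality already proved, being careful about two points: the domain in the definition must be taken to be the section $S_1(v)=D_h^{-1}\tilde S_h$ rather than all of $\Om_v$, and all the competing small universal constants produced along the way must be absorbed into a single $\bar\mu$.

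For condition 1) I would set $\Om:=S_1(v)$. Then $1\ge v\ge 0$ on $\ov\Om$ together with $v(0)=0$, $\nabla v(0)=0$ are exactly \eqref{v1}; the inclusions $x^*+cB_1\subset S_1(v)\subset CB_1$ of \eqref{v2} give a ball inside $\Om$ and, since $A_h$ (a sliding) and $D_h^{-1}$ (diagonal with positive entries) both preserve $\{x_n>0\}$ so that $\Om_v\subset\{x_n>0\}$, also $\Om\subset B_{C}^+$; and $0\in\p\Om$ because $0$ lies in $G_v=\p\Om_v\cap\p S_1(v)$. For condition 2), the interior bound is precisely \eqref{v3}: since $2\eps_0\ge\eps_0$ and $t\mapsto (t^+)^\alpha$ is nondecreasing, \eqref{v3} implies the two inequalities in the definition with $\sigma=2\eps_0$. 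For condition 3), take $G:=G_v$; \eqref{v4} gives that $G_v$ is a graph with $g_v\le\eps_0|x'|^2\le 2\eps_0|x'|^2$, we have $v=1$ on $\p S_1(v)\setminus G_v$, and \eqref{v5} gives $(1-2\eps_0)\ph_v\le v\le(1+2\eps_0)\ph_v$ on $G_v$, with $\ph_v(x')=h^{-1}\ph(d_1x_1,\dots,d_{n-1}x_{n-1})$ satisfying $\bar\mu^{-1}\mathcal N\ge D^2_{x'}\ph_v\ge\bar\mu\mathcal N$ for $\mathcal N=\mathrm{diag}(a_1^2,\dots,a_{n-1}^2)$, $a_i=d_ih^{-1/2}$, provided $\bar\mu\le\mu$.

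It then remains to note that $\bar\mu\le a_1\le\cdots\le a_{n-1}$: the ordering is the rotation normalization of Proposition~\ref{p1} part 2), and $a_i=d_ih^{-1/2}\ge c_3$ by \eqref{6}, so one only needs $\bar\mu\le c_3$. Thus $\bar\mu$ can be chosen as the minimum of finitely many universal constants ($c$, $1/C$, $\mu$, $c_3$), which depends only on $n$, $\alpha$, $\mu$; and the smallness $h\le h_0=c(\rho,\rho',\eps_0)$ is already built into Proposition~\ref{p1} and into the derivation of \eqref{v3}. I do not expect any genuine obstacle here — every inequality needed has been established; the only thing requiring care is the correct choice of the reference domain (the section, with its two-part boundary, where the hypothesis $1\ge v$ is actually valid) and the bookkeeping of the constants.
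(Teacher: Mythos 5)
Your proof is correct and takes the same route as the paper: the paper presents Lemma~\ref{l3} as a direct restatement of the already-established facts \eqref{v1}--\eqref{v5} together with the lower bound \eqref{6} on $d_i h^{-1/2}$, which is precisely what you do, with the appropriate care that the reference domain is the section $S_1(v)$ and that $\bar\mu$ is taken as the minimum of the finitely many universal constants involved.
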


\

{\bf Definition of $\mathcal S_h'(u)$.}

Given a section $S_h(u)$ at the origin for some convex function $u$, we define the set $\mathcal S'_h(u) \subset \R^{n-1}$ (and call it {\it normalized diameter} of $S_h(u)$) as
$$x' \in \mathcal S'_h(u) \Leftrightarrow x^*_h + h^\frac 12(x',0) \in S_h(u),$$
where $x^*_h$ denotes the center of mass of $S_h(u)$. In other words $\mathcal S_h'$ is obtained by intersecting $S_h$ with the $n-1$ dimensional plane generated by $e_1$, ..$e_{n-1}$ passing through its center of mass, and then we perform a $h^{- 1/2}$ dilation.

From the definition we see that if $\tilde u(Ax)=u(x)$ with $A$ a sliding along $\{x_n=0\}$ then $\mathcal S_h'(\tilde u)=\mathcal S_h(u)$. If $u$ satisfies the hypotheses of Proposition \ref{p1} then, by the definition of $d_i$ (see Remark \ref{r1}), we have that $\mathcal S_h'(u)$ is equivalent to the $n-1$ dimensional ellipsoid $E_h$ of axes $a_i=d_i h^{-1/2}$, $i<n$ i.e.
\begin{equation}\label{12}
E_h \subset \mathcal S_h'(u) \subset C(n) E_h.
\end{equation}
Thus Lemma \ref{l2} is equivalent to showing that $\mathcal S_h'(u)$ is included in a fixed ball of universal radius for all $h$ small.

Next we check the relation between $\mathcal S'_t(v)$ and $\mathcal S'_{th}(u)$ if $v$ is the normalized solution for $u$ in $S_h$. Since
$$v=\frac 1 h \tilde u(D_h x)$$ we have $S_t(v)=D_h^{-1} S_{th}(\tilde u)$ hence
\begin{equation}\label{11}
\mathcal S_t'(v)= h^\frac 1 2 {D_h'}^{-1} \mathcal S'_{th}(\tilde u) = h^ \frac 12 {D_h'}^{-1} \mathcal S'_{th}(u),
\end{equation}
where $D_h'=diag(d_1,..,d_{n-1})$ represents the restriction of $D_h$ to the first $n-1$ variables.

In order to prove Lemma \ref{l2} and therefore Theorem \ref{T1} it suffices to prove the next proposition which provides bounds for the sets $\mathcal S_t'(v)$ for general functions $v \in \mathcal D_\sigma^{\bar \mu}$.

\begin{prop}\label{p2}
Let $\bar \mu$ small, $M$ large be fixed. There exist positive constants $\delta$, $\bar c$ small, depending only on $\bar \mu$, $n$, $\alpha$, $M$ such that if $$v \in \mathcal D_\delta^{\bar \mu}(a_1,..,a_{n-1}), \quad \quad \mbox{and} \quad a_{k+1} \ge \delta^{-1},$$
for some $0 \le k \le n-2$, then
$$\mathcal S_t'(v) \subset \{|(x_{k+1},..,x_{n-1})| \le \frac 1 M \},$$
for some $t \in [\bar c, 1]$.
\end{prop}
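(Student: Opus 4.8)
The plan is to argue by contradiction using the compactness built into the class $\mathcal D^{\bar\mu}_\delta$. Suppose the statement fails; then there is a sequence $\delta_j\to 0$ and functions $v_j\in\mathcal D^{\bar\mu}_{\delta_j}(a_1^j,\dots,a_{n-1}^j)$ with $a_{k+1}^j\ge\delta_j^{-1}$ for a fixed index $k$, such that for every $t\in[\bar c,1]$ the set $\mathcal S'_t(v_j)$ sticks out of $\{|(x_{k+1},\dots,x_{n-1})|\le 1/M\}$; we will derive a contradiction by letting $j\to\infty$. The normalizations in property (1) of $\mathcal D^{\bar\mu}$ — namely $B_{\bar\mu}(x_0)\subset\Omega\subset B_{1/\bar\mu}^+$, $0\le v\le 1$, $v(0)=0$, $\nabla v(0)=0$ — together with the pinching $(1-\sigma)[(x_n-\sigma|x'|^2)^+]^\alpha\le\det D^2 v\le(1+\sigma)(x_n+\sigma|x'|^2)^\alpha$ give uniform interior estimates, so along a subsequence $v_j\to v_\infty$ locally uniformly in $\Omega$, and $\Omega_j\to\Omega_\infty$ in Hausdorff distance, with $v_\infty$ a convex solution of $\det D^2 v_\infty=x_n^\alpha$ in $\Omega_\infty$. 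The key point is to track what happens in the directions $e_{k+1},\dots,e_{n-1}$: since $a_i^j\ge a_{k+1}^j\to\infty$ for $i>k$, the boundary data $\ph_{v_j}$ has Hessian blowing up like $(a_i^j)^2$ in those directions, which forces the section $S_1(v_j)$ — hence its midsection $\mathcal S'_1(v_j)$ — to shrink to zero thickness in $x_{k+1},\dots,x_{n-1}$. So the limit domain $\Omega_\infty$ lies in the subspace $\{x_{k+1}=\dots=x_{n-1}=0\}$ and the limiting problem is genuinely lower dimensional (in $x_1,\dots,x_k,x_n$), with boundary data on $G_\infty$ still quadratic and nondegenerate in $x_1,\dots,x_k$.

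The first step is therefore to prove the quantitative decay estimate: if $v\in\mathcal D^{\bar\mu}_\sigma(a_1,\dots,a_{n-1})$ then $\mathcal S'_1(v)\subset\{|(x_{k+1},\dots,x_{n-1})|\le C(\bar\mu,n,\alpha)/a_{k+1}\}$ for an appropriate constant, using a barrier that exploits the lower bound $v\ge(1-\sigma)\ph_v$ on $G$ in exactly those directions (the same philosophy as the barrier $w_3$ in Proposition \ref{p1}, but now adapted to pick out the large-$a_i$ directions). Combined with $a_{k+1}^j\to\infty$ this yields the degeneration. Second, I would set up the convergence carefully: the sequence $v_j$ restricted to slabs away from $\{x_n=0\}$ converges in $C^1_{loc}$, and on the boundary piece $G_{v_j}$ (a graph $x_n=g_j(x')$ with $g_j\le\sigma_j|x'|^2\to 0$) the data converges to the quadratic $\ph_\infty(x_1,\dots,x_k)$; one has to make sure the limit $v_\infty$ actually attains this boundary data — this is where the interior tangent ball / the explicit lower barrier $w_1$ of Lemma \ref{l1} (equivalently its rescaled version) is used, to get a uniform modulus of continuity up to $\{x_n=0\}$ on $G$. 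Third, once $v_\infty$ is identified as a lower-dimensional solution of $\det D^2 v_\infty=x_n^\alpha$ with quadratic, uniformly convex boundary data on a flat piece and value $1$ elsewhere, I would invoke the classification/Liouville-type rigidity (Theorem \ref{T3} and ultimately Theorem \ref{T2}, or at this stage a softer uniqueness statement for the normalized limit problem) to conclude that $\mathcal S'_1(v_\infty)$ — and hence $\mathcal S'_t(v_\infty)$ for all $t$ bounded below — is a genuine $k$-dimensional ellipsoid contained in $\{x_{k+1}=\dots=x_{n-1}=0\}$, in particular inside $\{|(x_{k+1},\dots,x_{n-1})|\le 1/(2M)\}$. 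For $j$ large this contradicts the standing assumption that $\mathcal S'_t(v_j)$ escapes $\{|(x_{k+1},\dots,x_{n-1})|\le 1/M\}$ for every $t\in[\bar c,1]$, provided we also know $\mathcal S'_t(v_j)\to\mathcal S'_t(v_\infty)$ for a suitable $t$; this last convergence of midsections follows from convergence of $v_j$ together with convergence of the centers of mass of $S_t(v_j)$.

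The main obstacle, I expect, is the up-to-the-boundary control needed to pass to the limit on $G_{v_j}$ and to guarantee that no mass of the section $S_t(v_j)$ leaks near $\{x_n=0\}$ in a way that survives in the limit: the equation degenerates there ($\det D^2 v\sim x_n^\alpha\to 0$), so the usual interior Monge–Ampère compactness machinery does not directly apply near the boundary, and one must combine the lower barrier (controlling $v$ from below, hence $S_t$ from inside) with an upper barrier or the graph bound $g_j\le\sigma_j|x'|^2$ (controlling $S_t$ from outside near $x_n=0$) to pin down $S_t(v_j)$ in a neighborhood of the boundary uniformly in $j$. A secondary technical point is ensuring the index $k$ and the ``escaping'' direction can be chosen consistently along the subsequence (finitely many choices, so a pigeonhole argument handles it), and that the value of $t\in[\bar c,1]$ realizing the contradiction can be selected after passing to the limit rather than before — one fixes a single $t_0$, say $t_0=1$, runs the argument, and only the final comparison needs $t_0\ge\bar c$.
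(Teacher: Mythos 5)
The compactness-and-contradiction framework is indeed how the paper reduces Proposition \ref{p2} to a statement about the limiting class $\mathcal D_0^\mu$ (Proposition \ref{p3}, via the compactness Lemma \ref{l4}), so the high-level shape of your argument matches the paper. However, two of your intermediate claims are wrong, and between them they conceal where all of the work actually lies.

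It is not true that $a_{k+1}^j\to\infty$ forces $S_1(v_j)$, or its midsection $\mathcal S'_1(v_j)$, to shrink to zero thickness in the $x_{k+1},\dots,x_{n-1}$ directions, nor that $\Omega_\infty$ collapses into the subspace $\{x_{k+1}=\cdots=x_{n-1}=0\}$. Property (1) of $\mathcal D_{\delta}^{\bar\mu}$ forces every $\Omega_j$ to contain a fixed ball $B_{\bar\mu}(x_0^j)$, and $S_1(v_j)$ contains the interior of $\Omega_j$; hence $\mathcal S'_1(v_j)$ always contains a ball of radius comparable to $\bar\mu$, uniformly in $j$. What degenerates as $a_{k+1}\to\infty$ is only the set $G_j\subset\partial\Omega_j$ where the boundary data lies below $1$: it collapses onto what is in the limit a genuinely lower-dimensional set, the set $G$ in the definition of $\mathcal D_0^\mu$. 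The limit $v_\infty$ is still an $n$-dimensional Monge-Amp\`ere solution on a full-dimensional domain (it belongs to $\mathcal D_0^\mu(a_1,\dots,a_l,\infty,\dots,\infty)$); only the boundary trace is singular. Consequently the quantitative decay $\mathcal S'_1(v)\subset\{|(x_{k+1},\dots,x_{n-1})|\le C/a_{k+1}\}$ that you propose to prove with a $w_3$-type barrier is simply false, and no such estimate is used in the paper.

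The real content, after taking the limit, is precisely Proposition \ref{p3}: for $u\in\mathcal D_0^\mu(a_1,\dots,a_l,\infty,\dots,\infty)$ there is a height $t\in[\bar c,1]$, in general strictly smaller than $1$, at which the normalized diameter $\mathcal S'_t(u)$ becomes thin in the $z$-directions. This is proved by induction on $k$ and ultimately rests on Theorem \ref{T3}, the hard estimate of the paper bounding the thickness of $\mathcal S'_h(u)$ by $Ch^\beta$ via tangent-cone analysis and Pogorelov-type estimates for $u$ and for its Legendre transform. Fixing $t_0=1$, as you suggest at the end, cannot work: $\mathcal S'_1$ never becomes thin. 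As written your argument replaces this core step with an incorrect dimension-reduction heuristic and treats Theorem \ref{T3} as a black box applied to a degenerate lower-dimensional limit that does not in fact occur.
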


\begin{rem}
Since $S_1(v) \subset B_{1/\bar \mu}$ we always have the inclusion
\begin{equation}\label{10}
\mathcal S_t'(v) \subset t^{-\frac 1 2} B_{2/\bar \mu}'.
\end{equation}
Proposition \ref{p2} states roughly that if the boundary data of $v$ grows sufficiently fast in the $(x_k,..,x_{n-1})$ variables then the normalized diameter $\mathcal S_t'(v)$ projects into an arbitrarily ``small" set in these variables.
\end{rem}

The proof of Proposition \ref{p2} will be completed in the next three sections. We conclude this section by showing that Lemma \ref{l2} follows from Proposition \ref{p2}.

\

\begin{lem}\label{l3.1}
 Proposition \ref{p2} implies Lemma \ref{l2}
\end{lem}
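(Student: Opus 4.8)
The statement to prove is \textbf{Lemma \ref{l3.1}}: Proposition \ref{p2} implies Lemma \ref{l2}. The idea is to iterate Proposition \ref{p2} to successively control groups of the axes $a_i = d_i h^{-1/2}$, using the rescaling relation \eqref{11} to pass from $v$ back to $u$ and to reduce the ``height'' of $h$ by a controlled factor at each step. Recall from \eqref{12} that $\mathcal S_h'(u)$ is equivalent to the ellipsoid $E_h$ of axes $a_1 \le \cdots \le a_{n-1}$, so Lemma \ref{l2} is precisely the assertion that $\max_i a_i = a_{n-1} \le C$ for all small $h$, with $C$ universal.

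First I would fix $M$ large, universal, to be chosen at the end (essentially $M$ will absorb the constants $C(n)$ in \eqref{12} and $1/\bar\mu$ from \eqref{10}), and let $\delta = \delta(\bar\mu, n, \alpha, M)$, $\bar c$ be the constants from Proposition \ref{p2}; these are universal. The key point is a \emph{dichotomy}: for a given small $h$, either all axes satisfy $a_i \le \delta^{-1}$ — in which case $\mathcal S_h'(u) \subset C(n)E_h \subset B_{C(n)\delta^{-1}}'$ and we are done with $C = C(n)\delta^{-1}$ — or there is a largest index $k$ with $a_{k+1} \ge \delta^{-1}$. In the latter case I would set $v$ to be the normalized solution of $u$ in $S_h$; by Lemma \ref{l3}, provided $h \le h_0 = c(\rho,\rho',\eps_0)$ and $\eps_0$ is small enough that $2\eps_0 \le \delta$, we have $v \in \mathcal D_\delta^{\bar\mu}(a_1,\dots,a_{n-1})$ with $a_{k+1} \ge \delta^{-1}$. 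Proposition \ref{p2} then gives $t \in [\bar c, 1]$ with $\mathcal S_t'(v) \subset \{|(x_{k+1},\dots,x_{n-1})| \le 1/M\}$. Translating via \eqref{11}, $\mathcal S_{th}'(u) = h^{-1/2} D_h' \, \mathcal S_t'(v)$, so the projection of $\mathcal S_{th}'(u)$ onto the last $n-1-k$ coordinates has size at most $\max_{i > k}(d_i h^{-1/2})/M = a_{n-1}/M$. On the other hand, by \eqref{12} applied at height $th$, the axes $a_i^{(th)}$ of $E_{th}$ satisfy $a_{n-1}^{(th)} \le C(n) \cdot (\text{that projection size}) \le C(n) a_{n-1}/M$. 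Choosing $M \ge 2C(n)$ we obtain the contraction $a_{n-1}^{(th)} \le \frac12 a_{n-1}^{(h)}$ while $th \ge \bar c h$.

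The iteration then runs as follows: starting from any small $h$, if the largest axis exceeds the universal threshold $\delta^{-1}$, replace $h$ by $t h \in [\bar c h, h]$ and the largest axis at least halves; after finitely many steps — at most $\log_2(a_{n-1}^{(h)}/\delta^{-1})$ of them, and $a_{n-1}^{(h)} \le C' h^{-1/4}$ by \eqref{10} so this is $O(\log(1/h))$ steps — we reach a height $h' \ge \bar c^{\,O(\log(1/h))} h$ at which all axes are $\le \delta^{-1}$, whence $\max_i d_i \le C(n)\delta^{-1} (h')^{1/2}$. The one subtlety is that this only controls the axes at the \emph{specific} small heights reached by the iteration, whereas Lemma \ref{l2} asks for all $h \le c(\rho,\rho')$; this is handled by the standard doubling argument, namely that $|S_h|^2 d_h^\alpha \sim h^n$ (Proposition \ref{p1}.2) together with $S_{h/2} \subset S_h$ forces the $d_i$ to vary by bounded factors as $h$ ranges over a dyadic-type interval, so controlling them at one height in each such interval suffices — and here one must be slightly careful that the iteration step $t \ge \bar c$ only moves $h$ down by a bounded factor, so every dyadic scale is visited. \textbf{The main obstacle} is precisely this bookkeeping: ensuring the contraction constant $1/2$ beats the ``cost'' $\bar c$ of lowering the height, i.e. that the process terminates at a height $h'$ comparable (in a controlled, though not universal, way) to $h$, and then converting the resulting bound ``at the visited heights'' into a bound at every $h \le c(\rho,\rho')$ via the comparability of sections at nearby heights. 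Everything else is direct substitution into \eqref{11}, \eqref{12}, \eqref{10} and the definition of $\mathcal D_\delta^{\bar\mu}$.
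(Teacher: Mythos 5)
Your high-level strategy matches the paper's: normalize $u$ in $S_h$ to get $v\in\mathcal D_\delta^{\bar\mu}$ via Lemma~\ref{l3}, apply Proposition~\ref{p2}, transfer to $\mathcal S_{th}'(u)$ via \eqref{11}, and iterate the resulting decay of $\max_i a_i$. However, the decay step itself has a genuine gap. Proposition~\ref{p2} plus \eqref{11} only tells you that the \emph{last} $n-1-k$ components of $\mathcal S_{th}'(u)$ are $\le a_i/M$. From this you conclude via \eqref{12} that the largest axis $a_{n-1}^{(th)}$ of $E_{th}$ is $\le C(n)\,a_{n-1}/M$. This implication is false: a small projection onto the last $n-1-k$ coordinates puts no constraint on the extent of $\mathcal S_{th}'(u)$ in the first $k$ coordinates, and the John ellipsoid $E_{th}$ could well have its long axis there. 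The bound on $a_{n-1}^{(th)}$ requires controlling \emph{all} components. The missing ingredient (which you allude to when you say $M$ should ``absorb'' $1/\bar\mu$ from \eqref{10}, but do not actually use) is exactly what makes the argument close: from \eqref{10}, $\mathcal S_t'(v)\subset t^{-1/2}B_{2/\bar\mu}'\subset B_{C_1}'$ with $C_1:=2\bar c^{-1/2}/\bar\mu$, so after applying $\mathrm{diag}(a_1,\dots,a_{n-1})$ the first $k$ components of $\mathcal S_{th}'(u)$ are $\le C_1 a_i$. This is useful only if $a_i\le\delta^{-1}$ for $i\le k$, which is why $k+1$ must be chosen as the \emph{first} (smallest) index with $a_{k+1}\ge\delta^{-1}$ — your ``largest index $k$ with $a_{k+1}\ge\delta^{-1}$'' is backwards, since the $a_i$ are nondecreasing and the largest such $k$ is $n-2$. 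With that choice, set $\bar C:= M C_1\delta^{-1}$; then when $\max a_i\ge\bar C$ one gets $C_1 a_i\le C_1\delta^{-1}=\bar C/M\le \max a_i/M$ for $i\le k$, so \emph{every} component of $\mathcal S_{th}'(u)$ is $\le \max a_i/M$, and \eqref{12} then gives the halving $\max a_i(th)\le\tfrac12\max a_i(h)$.

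On the iteration: you describe starting at an arbitrary small $h$, iterating $O(\log(1/h))$ times, and landing at a much smaller height $h'$ where axes are controlled — which indeed does not directly bound $\max a_i(h)$ and forces you to invoke an extra doubling/comparability argument. The paper's iteration is cleaner and runs in the opposite direction: it starts from $h_0=c(\rho,\rho',\eps_0)$, where $\max a_i(h_0)\le C_0'(\rho,\rho',\eps_0)$ is a priori bounded because $S_{h_0}\subset B_{1/\rho}$, and halves $\max a_i$ at each step; since the starting value is controlled, the number of steps is bounded in terms of $\rho,\rho',\eps_0$, and the final height is $\ge\bar c^N h_0=c(\rho,\rho')$, which is precisely the range claimed in Lemma~\ref{l2}. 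Also, your claimed bound $a_{n-1}^{(h)}\le C'h^{-1/4}$ from \eqref{10} is not right; the trivial bound from $S_h\subset B_{1/\rho}$ is $a_i(h)\le C\rho^{-1}h^{-1/2}$, though the distinction doesn't affect the order of magnitude of the step count.
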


\begin{proof} We apply Proposition \ref{p2} for $\bar \mu$ as in Lemma \ref{l3} and for $M:=4 \sqrt n$, hence the constants $\delta$, $\bar c$ above become universal constants. We also choose $\eps_0=\delta /2$ so that Proposition \ref{p2} applies for all normalized functions of $u$ in $S_h$ with $ h\le h_0$, with $h_0=c(\rho,\rho')$.

Denote by $d_i(h)$ and $a_i(h)$ the quantities $d_i$ and $a_i=d_i h^{-1/2}$ (for $i<n$) corresponding to the section $S_h$. We show that for any $h \le h_0$ we have
\begin{equation}\label{9}
\max_i a_i(h) \ge \bar C \quad \quad \Rightarrow \quad \quad \max_i a_i(th) \le \frac 12 \max_i a_i(h),
\end{equation}
for some $t \in [\bar c, 1]$, and with $\bar C$ universal.

Since $S_{h_0} \subset B_{1/\rho}$ we find $d_i(h_0) \le \rho^{-1}$ hence
$$\max a_i(h_0) \le C_0':=\rho^{-1} h_0^{- \frac 1 2}.$$
Now property \eqref{9} implies that $\max a_i(h)$ is bounded above by a universal constant for all $h \le c_1'$, thus Lemma \ref{l2} holds.

In order to prove \eqref{9} let $v$ denote the normalized function for $u$ in $S_h$ and assume that $a_{k+1}(h)$ is the first $a_i(h)$ greater than $\delta^{-1}$ i.e.
$$a_1 \le \cdots \le a_{k} \le \delta^{-1} \le a_{k+1}  \le \cdots \le a_{n-1}.$$
Since $v \in \mathcal D_\delta^{\bar \mu} (a_1,..,a_{n-1})$, by Proposition \ref{p2} we have (see \eqref{10})
$$\mathcal S_t'(v) \subset \{|(x_1,..,x_k)| \le C_1\} \times \{|(x_{k+1},..,x_{n-1})| \le \frac 1 M\},$$
for some $C_1$ universal with $$C_1:=2\bar c^{-\frac 12}/\bar \mu \ge 2t^{-\frac 12}/ \bar \mu.$$
From \eqref{11} $$\mathcal S'_{th}(u)=h^{-\frac 12} D_h' \mathcal S_t'(v)=diag(a_1,..,a_{n-1}) \mathcal S_t'(v),$$ and we obtain
$$\mathcal S'_{th}(u) \subset \prod_{i=1}^{k}\{|x_i|\le C_1 a_i\} \times \prod_{i=k+1}^{n-1}\{|x_i|\le \frac{a_i}{M}\}.$$
For $i \le k$ we have
$$C_1 a_i \le C_1 \delta^{-1} := \frac {\bar C} { M} \le \frac{\max a_i} {M},$$
 and we find
$$\mathcal S'_{th}(u) \subset \frac 14 \max a_i(h) B'_1,$$
which gives (see \eqref{12}) $$\max a_i(th) \le \frac 12  \max a_i(h).$$
\end{proof}

\section{Compactness and the class $\mathcal D_0^\mu$}\label{s4}

In this section we use compactness arguments and reduce Proposition \ref{p2} to the Theorem \ref{T3} below.

We prove Proposition \ref{p2} by compactness by letting $\sigma \to 0$ and $a_{k+1} \to \infty$.

First we remark that if we have a sequence of functions $v_m$ in $D_{\sigma_m}^\mu$ with $\sigma_m \to 0$ then we can extract a subsequence $v_{m_l}$ that converges to a limiting convex function $v$. Here, and throughout this paper, the convergence of convex functions (defined on possibly different domains) means that their supergraphs converge in the Hausdorff distance (in $\R^{n+1}$) to the supergraph of the limit function. The Monge-Ampere measure of the limit function $v$ is given by $x_n^\alpha$, however $v$ may have discontinuities at the boundary. Before we introduce the class $\mathcal D_0^\mu$ of such limiting solutions, we recall some definitions of boundary values for convex functions defined in convex domains (see \cite{S1}).

\begin{defn}
Let $u:\ov \Om \to \R$ convex, and $\ph: \p \Om \to \R$ be two bounded semicontinuous functions i.e. their upper graph
$$\{x_{n+1} \ge u(x)\} \subset \ov \Om \times \R, \quad \quad \{x_{n+1} \ge \ph(x) \} \subset \p \Om \times \R,$$ are closed sets. We say that $$u=\ph \quad \mbox{on} \quad \p \Om$$
if $u|_{\p \Om}=\ph^*$ where $\ph^*$ represents the convex envelope of $\ph$. In other words $u=\ph$ on $\p \Om$ means that, when we restrict to the cylinder $\p \Om \times \R$, the upper graph of $u$ coincides with the convex envelope of the upper graph of $\ph$.
\end{defn}

An example of function $\ph$ is of course $u|_{\p \Om}$, the restriction of $u$ to $\p \Om$, and when $\Om$ is strictly convex this is the only possible choice. On the other hand, on some flat part of the boundary $\p \Om$ there are many choices of functions $\ph \ge u$ since we only require $\ph^*=u$. The advantage of the definition above is that the maximum principle still holds and the boundary data behaves well when taking limits. Precisely we have (see Proposition 2.2 and Theorem 2.7 in \cite{S1}):

{\it Maximum Principle:} Assume
$$u=\ph, \quad v=\psi,  \quad \ph \le \psi \quad \mbox{on $\p \Omega$},$$
$$\det D^2 u \ge f \ge \det D^2 v \quad \mbox{in $\Om$.} $$
Then $u \le v$.

\

{\it Closedness under limits:} Assume $$\det D^2 u_k=f_k, \quad \quad u_k=\ph_k \quad \mbox{on} \quad \p \Om_k,$$  and
$$u_k \to u, \quad \ph_k \to \ph, \quad f_k \to f.$$ Then
$$\det D^2 u=f, \quad \quad\mbox{and} \quad u=\ph \quad \mbox{on} \quad \p \Om.$$

\

By $u_k \to u$, $\ph_k \to \ph$ above we understand that the corresponding upper graphs converge in the Hausdorff distance and $f_k \to f$ means that $f_k$ converges uniformly on compact sets to $f$.

We also use the following property of boundary values as defined above: if $u=\ph$ on $\p \Om$ then the restriction of $u$ to the set $\{ u \le h\}$
satisfies
$$\mbox{ $u=\ph$ on  $\{\ph \le h\}$ and $u=h$ on the rest of $\p \{u<h\}$}.$$

 Next we introduce the class $\mathcal D_0^\mu$. By abuse of notation we denote its elements still by $u$ and they can be viewed as limits of normalized solutions of the functions $u$ from Section 2.

 {\bf The class $\mathcal D_0^\mu$.}

 Let $\mu>0$ be fixed, and let $\mu \le a_1 \le \cdots \le a_k$ be $k$ real numbers, $0 \le k \le n-1$. We say that the convex function $u$ defined in the convex set $\ov \Om$ belongs to the class
 $$u \in \mathcal D_0 ^\mu(a_1,..,a_k, \infty,..,\infty)$$
 if the following hold:

 1) $$0 \in \p \Om, \quad B_\mu(x^*) \subset \Om \subset B_{1/\mu}^+ \quad \mbox{for some $x^*$},$$
 $$u \ge 0, \quad u(0)=0, \quad \nabla u(0)=0,$$

 2) $$\det D^2 u=x_n^\alpha \quad \mbox{in $\Om$,}$$

 3) $$ \mbox{$u=\ph$ on $\p \Om$ with} \quad \quad \ph:=\left \{
 \begin{array}{l}
   \psi_u \quad \quad \mbox{on $G \subset \p \Om$,}\\
 1 \quad \quad \quad \mbox{on $\p \Om \setminus G$,}
 \end{array}
 \right.
 $$
 where $\psi_u(x_1,..x_k)$ is a nonnegative convex function of $k$ variables satisfying
 $$\mu^{-1} \mathcal N_k \ge D^2 \psi_u \ge \mu \mathcal N_k, \quad \quad \quad \mathcal N_k:=diag(a_1^2,..,a_k^2),$$
 and $G$ represents the $k$ dimensional set (in $\R^n$) where $\psi_u \le 1$, i.e.
 $$G:=\{x \in \R^n| \quad \psi_u(x_1,..,x_k) \le 1, \quad x_i=0  \quad \mbox{if} \quad i>k \}.$$

We easily obtain the following lemma

\begin{lem}[Compactness]\label{l4}
Assume $v_m \in \mathcal D_{\sigma_m}^\mu(a_1^m,..,a_{n-1}^m)$ is a sequence of functions with
$$\sigma_m \to 0, \quad a_{k+1}^m \to \infty.$$
Then we can extract a convergent subsequence to a function $u$ with
$$u \in \mathcal D_0^\mu(a_1,..,a_l,\infty,.,\infty)$$
for some $0 \le l \le k$.
\end{lem}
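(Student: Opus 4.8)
The plan is to establish Lemma \ref{l4} via a standard compactness-plus-stability argument, using the closedness-under-limits property stated above. First I would extract a convergent subsequence. Since each $v_m \in \mathcal D_{\sigma_m}^\mu(a_1^m,\dots,a_{n-1}^m)$ is convex, bounded between $0$ and $1$, and defined on a domain $\Om_m$ satisfying $B_\mu(x_0^m)\subset \Om_m \subset B_{1/\mu}^+$, the supergraphs $\{x_{n+1}\ge v_m(x)\}\subset B_{1/\mu}^+\times[0,1]$ lie in a fixed compact set; by the Blaschke selection theorem for convex sets we may pass to a subsequence along which the supergraphs converge in the Hausdorff distance to the supergraph of a convex function $u$ defined on a limiting convex domain $\ov\Om$. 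Passing to further subsequences we may also assume $x_0^m\to x^*$, so that $B_\mu(x^*)\subset\Om\subset B_{1/\mu}^+$ and $0\in\p\Om$, $u\ge 0$, $u(0)=0$, which gives part 1) of the definition of $\mathcal D_0^\mu$ (the normalization $\nabla u(0)=0$ in the sense of H2 passes to the limit since no supporting plane $x_{n+1}=tx_n$, $t>0$, of the limit could fail to be a limit of such for $v_m$).

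Second I would handle the interior equation. Property 2) for $v_m$ says $(1-\sigma_m)[(x_n-\sigma_m|x'|^2)^+]^\alpha \le \det D^2 v_m \le (1+\sigma_m)(x_n+\sigma_m|x'|^2)^\alpha$; as $\sigma_m\to 0$ both bounding functions converge locally uniformly to $x_n^\alpha$, so $f_m:=\det D^2 v_m$ is squeezed to converge locally uniformly to $x_n^\alpha$. By the closedness-under-limits statement (weak continuity of Monge-Ampère measures under the Hausdorff convergence of supergraphs) we get $\det D^2 u = x_n^\alpha$ in $\Om$, which is part 2).

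Third, and this is the delicate point, I would identify the boundary data of the limit. Write $v_m=\ph_m$ on $\p\Om_m$ in the sense of the definition above, with $\ph_m=1$ on $\p\Om_m\setminus G_m$ and $(1-\sigma_m)\ph_{v_m}\le v_m\le (1+\sigma_m)\ph_{v_m}$ on the graph $G_m=(x',g_m(x'))$ with $0\le g_m\le \sigma_m|x'|^2$; here $\bar\mu^{-1}\mathcal N_m \ge D^2_{x'}\ph_{v_m}\ge \bar\mu\mathcal N_m$ with $\mathcal N_m=\mathrm{diag}((a_1^m)^2,\dots,(a_{n-1}^m)^2)$. Since $g_m\to 0$ uniformly on compacts, the graphs $G_m$ flatten onto $\{x_n=0\}$. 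For the coefficients: by passing to a subsequence we may assume $a_i^m\to a_i\in[\bar\mu,\infty]$ for each $i$; let $l$ be the largest index with $a_l<\infty$ (so $0\le l\le k$ because $a_{k+1}^m\to\infty$ forces $a_{k+1}=\infty$). On any compact subset of $\{|(x_{l+1},\dots,x_{n-1})|<\eps\}$ the convexity of $\ph_{v_m}$ together with the lower bound on $D^2_{x'}\ph_{v_m}$ in the blowing-up directions $x_{l+1},\dots,x_{n-1}$ forces $\ph_{v_m}\to +\infty$ there unless those variables are constrained to $0$; combined with $0\le v_m\le 1$ this shows the relevant part of the limiting boundary graph collapses to the $l$-dimensional slice $\{x_i=0, i>l\}$, where the $\ph_{v_m}$ (being uniformly convex in $x_1,\dots,x_l$ with Hessian pinched between $\bar\mu\mathcal N_l$ and $\bar\mu^{-1}\mathcal N_l$, normalized by vanishing at the origin with zero gradient) converge, along a further subsequence, locally uniformly to a convex function $\psi_u(x_1,\dots,x_l)$ satisfying $\mu^{-1}\mathcal N_l\ge D^2\psi_u\ge\mu\mathcal N_l$. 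Using the closedness-under-limits property for the boundary data—$\ph_m\to\ph$ in the Hausdorff sense implies $u=\ph$ on $\p\Om$—and the factor $(1\pm\sigma_m)\to 1$, one concludes that $u=\ph$ on $\p\Om$ with $\ph=\psi_u$ on the $l$-dimensional set $G=\{\psi_u\le 1, x_i=0 \text{ for } i>l\}$ and $\ph=1$ on the rest. This is exactly part 3), so $u\in\mathcal D_0^\mu(a_1,\dots,a_l,\infty,\dots,\infty)$.

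The main obstacle I expect is the last step: making rigorous the claim that the uniformly convex directions $x_{l+1},\dots,x_{n-1}$, whose second derivatives blow up, genuinely force the limiting boundary set to degenerate to the $l$-dimensional slice rather than producing spurious boundary behavior, and simultaneously controlling the limit of $\ph_{v_m}$ restricted to the bounded directions. The honest way to do this is to argue directly with the sets $\{\ph_{v_m}\le 1\}\subset\R^{n-1}$: their extent in direction $e_i$ is comparable to $1/a_i^m\to 0$ for $i>l$ and comparable to $1/a_i^m\to 1/a_i$ (bounded below) for $i\le l$, so these convex sets Hausdorff-converge to a bounded convex subset of the $l$-plane; then the convex envelope operation defining the boundary value commutes with this limit by the general stability result cited from \cite{S1}. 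Everything else is a routine invocation of Blaschke selection and weak continuity of Monge-Ampère measures.
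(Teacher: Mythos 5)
Your outline handles the domain, the interior equation, and the boundary data essentially correctly, and these parts do track the paper's "closedness under limits" property; the paper in fact treats all of that as routine. The genuine problem is buried in a parenthetical in your first paragraph, where you claim that $\nabla u(0)=0$ in the sense of H2 passes to the limit "since no supporting plane $x_{n+1}=tx_n$, $t>0$, of the limit could fail to be a limit of such for $v_m$." This is not true in general. Hausdorff convergence of supergraphs preserves inequalities, but the \emph{absence} of a supporting plane of a given form is an open condition on the function and is not stable under limits. A concrete warning sign: convex functions $v_m$ that vanish on a thin slab $\{0\le x_n\le a_m\}$ with $a_m\to 0$ have $\nabla v_m(0)=0$, yet can converge to a function with a supporting plane of positive slope in $e_n$ at the origin. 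Your examples are ruled out here only because of the Monge--Amp\`ere structure, and that is precisely the content that needs proof.

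The paper's actual proof of this lemma is short because it isolates exactly this point: the only thing that does not follow from closedness under limits is $\nabla u(0)=0$. It then proves it by reusing the barrier construction from Proposition~\ref{p1}: for every $v\in\mathcal D^\mu_\sigma$ the barriers $w_1,w_2$ give a quantitative lower bound $x^*_h(v)\cdot e_n\ge h^{3/4}$ for the $e_n$-coordinate of the center of mass of $S_h(v)$, uniformly in $\sigma\in[0,1/2)$ and in the $a_i$. This inequality is stable under Hausdorff convergence and so holds for the limit $u$; but if $u$ had a supporting plane $x_{n+1}=tx_n$ with $t>0$, one would get $S_h(u)\subset\{x_n\le h/t\}$ and hence $x^*_h(u)\cdot e_n=O(h)$, contradicting the $h^{3/4}$ bound for small $h$. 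You need an argument of this type (or an equivalent quantitative strict-convexity statement at the origin, uniform in $m$) to close the gap; the soft topological claim you wrote does not do it.
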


\begin{proof}
All the properties for $u$, except $\nabla u(0)=0$, follow from the closedness under limits property above. In order to show that $\nabla u(0)=0$ we remark that if $v \in D_\sigma^\mu$ (with $\sigma \in [0,1/2)$) then we can obtain from the proof of Proposition \ref{p1} that the center of mass $x^*_h(v)$ of $S_h(v)$ satisfies
\begin{equation}\label{13}
x_h^*(v)\cdot e_n \ge  h^\frac 34 \quad \quad \mbox{if $h \le c$,}
\end{equation}
where $c$ depends only on $n$, $\alpha$, $\mu$. Indeed, we bound $v$ by below using the same barriers $w_1$ and $w_2$ (with constants depending only on $n$, $\alpha$, $\mu$) and obtain the estimates \eqref{1}, \eqref{2}. We can do this since we only need the inequality $v \ge c|x'|^2$ on the part of the boundary where $\{v<1\}$ which is clearly satisfied by all $v \in \mathcal D_\sigma^\mu$.

Since property \eqref{13} is preserved after taking limits we see that $u$ satisfies it as well, and this easily implies that $\nabla u(0)=0$  since otherwise $S_h(u) \subset \{ x_n \le O(h) \}$ and we contradict \eqref{13}.

\end{proof}

\begin{rem}\label{r3}
In the proof above we allow $\sigma_m=0$ and $a_i=\infty$ for some $i$, therefore the compactness holds for the class $\mathcal D_0^\mu$ as well.
\end{rem}

Using the compactness lemma above we see that in order to prove Proposition \ref{p2} it suffices to prove the following version for the class $\mathcal D_0^\mu$.

\begin{prop}\label{p3}
Let $\mu>0$ small, $M>0$ large be fixed, and assume $$u \in \mathcal D_0^\mu(a_1,..a_k,\infty,..,\infty)$$ for some $0 \le k \le n-2.$ There exists $\bar c(k,M) >0$ depending only on $n$, $\alpha$, $\mu$, $M$, $k$ such that
$$\mathcal S'_t(u) \subset \{ |(x_{k+1},..,x_{n-1})| \le \frac 1 M \},$$
for some $t \in [\bar c(k,M),1]$.
 \end{prop}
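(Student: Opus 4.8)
The plan is to reduce the statement to a statement about the tangent cone of $u$ along the $(n-1-k)$-dimensional ``edge'' where the boundary data is constant, and then invoke the regularity theory for degenerate Monge-Ampère tangent cones (the degenerate analogue of the results of \cite{S2}). First I would observe that after the normalization built into the class $\mathcal D_0^\mu$, the section $S_1(u)=\Om$ is comparable to a ball, so $\mathcal S'_t(u)$ is well-defined for all $t\le 1$ and, by \eqref{10}, is always contained in $t^{-1/2}B'_{2/\mu}$. The claim to be proved is that, at some definite scale $t\in[\bar c,1]$, the normalized diameter projects into an arbitrarily small set in the last $n-1-k$ of the tangential variables $x_{k+1},\dots,x_{n-1}$. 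The point is that along these directions the boundary data $\ph$ equals the constant $1$ on $G$ and there is no quadratic separation forcing the section to be thin; on the contrary, one must show the section IS thin, which can only come from the structure of the equation $\det D^2 u=x_n^\alpha$ together with the condition $\nabla u(0)=0$ (which survives the limit by Lemma \ref{l4}).

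The key steps, in order, are as follows. (i) Argue by contradiction: suppose the conclusion fails, so there is a sequence $u_m\in\mathcal D_0^\mu(a_1^m,\dots,a_k^m,\infty,\dots,\infty)$ and for every $t\in[\bar c,1]$ the set $\mathcal S'_t(u_m)$ is not contained in $\{|(x_{k+1},\dots,x_{n-1})|\le 1/M\}$; by the compactness property (Lemma \ref{l4} and Remark \ref{r3}) pass to a limit $u_\infty\in\mathcal D_0^\mu(b_1,\dots,b_l,\infty,\dots,\infty)$ with $l\le k$, which again satisfies $\det D^2 u_\infty=x_n^\alpha$, has $\nabla u_\infty(0)=0$, and whose boundary data is constant in the variables $x_{k+1},\dots,x_{n-1}$. (ii) For such a limit, use that the boundary data is genuinely independent of $x_{k+1},\dots,x_{n-1}$ on the relevant piece of $\p\Om$ to conclude that $u_\infty$ itself is, after sliding, a function that is ``flat'' (at most linear) in these variables — more precisely, show that the second derivatives $\p_{ii}u_\infty$ for $i>k$ must vanish along the edge, because a positive lower bound would, via the Pogorelov-type interior estimates of Section \ref{s5}, force a corresponding lower bound on $\det D^2 u_\infty$ near the edge that is incompatible with $\det D^2 u_\infty = x_n^\alpha \to 0$ there, while an upper bound combined with $\nabla u_\infty(0)=0$ and convexity pins the function down. (iii) Deduce that the sections $S_t(u_\infty)$ are unbounded (or at least arbitrarily large) in the $x_{k+1},\dots,x_{n-1}$ directions after the sliding normalization is removed, i.e.\ $\mathcal S'_t(u_\infty)$ does project into a small set in those coordinates — contradicting the limit of the assumed failure, since the property ``$\mathcal S'_t$ not contained in $\{|(x_{k+1},..,x_{n-1})|\le 1/M\}$ for all $t$'' is closed under the convergence (the normalized diameters converge).

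The main obstacle I expect is step (ii): showing that a solution of $\det D^2 u = x_n^\alpha$ in a half-space-like domain, with boundary data independent of the directions $x_{k+1},\dots,x_{n-1}$ and with $\nabla u(0)=0$, must actually be (affinely) independent of those directions up to a controlled error. The difficulty is exactly the one flagged in the introduction — the pure second derivatives $\p_{ii}u$ in tangential directions are only subsolutions of the linearized operator, so one cannot directly run a maximum-principle argument to force them down. I expect the resolution to use the two Pogorelov-type estimates of Section \ref{s5} applied to the restriction of $u$ to slices, together with the degenerate-cone ideas from \cite{S2}: one slides to kill the first-order part in $x_{k+1},\dots,x_{n-1}$ and then shows that the ``slope'' of $u$ in those directions cannot grow, using that growth would create positive Monge-Ampère mass where the right-hand side $x_n^\alpha$ is too small. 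A secondary technical point is bookkeeping the sliding maps $A$ through the compactness argument so that ``small normalized diameter'' is genuinely a closed condition; this is routine given the invariance $\mathcal S'_h(\tilde u)=\mathcal S'_h(u)$ under slidings noted before \eqref{12}.
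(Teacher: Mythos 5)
Your plan misses the structure of the paper's argument and, more importantly, contains a sign error in the key mechanism. In the paper, Proposition \ref{p3} is not proved directly by a compactness--contradiction argument; it is deduced by induction on $k$ from the quantitative Theorem \ref{T3}, which gives the power decay $\mathcal S'_h(u) \subset \{|(x_{k+1},\ldots,x_{n-1})| \le Ch^\beta\}$. The induction step uses compactness to upgrade the inductive hypothesis to Property $P(k-1)$ (allowing finite $a_i$), splits into cases $a_k \ge C_0$ versus $a_k < C_0$, and in the latter case relabels $\mu$ so that Theorem \ref{T3} applies directly. All the real work therefore sits in Theorem \ref{T3}, which is established in Section \ref{s6} via a tangent-cone analysis, not inside Proposition \ref{p3} itself.

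The more serious problem is step (ii) of your plan. You propose to show that $\p_{ii}u_\infty$ vanishes for $i>k$ along the edge; but vanishing tangential curvature in those directions would make the sections \emph{large} in $z=(x_{k+1},\ldots,x_{n-1})$, not small, so this cannot be the right mechanism. What the paper actually proves (Lemma \ref{l10}) is that the tangent-cone profile $\gamma_u(z):=\Gamma_u(y,z,1)$ grows \emph{quadratically} around its minimum $z_o$, with a positive lower bound $\gamma_u(z)\ge c_1|z-z_o|^2$. It is precisely this lower bound that confines $S_t(\gamma_u)$ to a ball of radius $\sim t^{1/2}$ and hence, via \eqref{59.7} and \eqref{511}, confines $\mathcal S'_h(u)$ in the $z$ variables. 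The Pogorelov-type estimates of Section \ref{s5} are applied not to slices of $u$ but to rescalings of $u$ converging to $\gamma_u$ (Step 1, yielding $D^2\gamma_u\le CI$) and to the Legendre transform $u^*$ (Step 2, yielding $D^2\gamma_u\ge cI$); Theorem \ref{Po} produces \emph{upper} bounds on second derivatives when $\det D^2u$ is prescribed, not lower bounds on the determinant as your reasoning requires. Your claim that a positive lower bound on $\p_{ii}u_\infty$, $i>k$, would force a lower bound on $\det D^2u_\infty$ near the edge is also false: $u_{nn}$ degenerates like $x_n^\alpha$ there, which compensates bounded tangential curvature without contradicting $\det D^2u=x_n^\alpha$. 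Finally, step (iii) is internally inconsistent as written---you first assert the sections become ``unbounded in the $x_{k+1},\ldots,x_{n-1}$ directions'' and then claim this means $\mathcal S'_t$ projects into a small set in those coordinates; these are opposite assertions.
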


We will prove Proposition \ref{p3} by induction on $k$, and this is the reason why we require the dependence of $\bar c$ on $k$. Clearly at the end, the constant $\bar c(M)$ which is the minimum of all $c(k,M)$ above can be taken independent of $k$.

 \

 {\it Proposition \ref{p3} implies Proposition \ref{p2}.}

 We show that Proposition \ref{p2} holds with the constant $$\bar c:= \bar c(2M) =\min_k \bar c(k,2M)$$ and for some $\delta>0$ small. Otherwise there exists a sequence of $\delta_m \to 0$ and corresponding functions $v_m \in D_{\delta_m}^\mu$, $a_k^m \to \infty$ for which the conclusion of Proposition \ref{p2} does not hold. By Lemma \ref{l4} we can extract a convergent subsequence to a function $$u \in \mathcal D_0^\mu(a_1,..,a_l,\infty,..,\infty) \quad \quad \mbox{for some $0 \le l \le k$}.$$ From Proposition \ref{p3} there is $t \in [\bar c,1]$ such that
 $$\mathcal S'_t (u) \subset \{|(x_{l+1},..,x_{n-1})| \le \frac{1}{2M} \} \subset \{|(x_{k+1},..,x_{n-1})| \le \frac{1}{2M}\},$$
and therefore the conclusion is satisfied for $v_m$ for all large $m$, contradiction.

\qed

The key step in proving Proposition \ref{p3} consists in proving the following estimates for the class $\mathcal D_0^\mu (1,1,.,1,\infty,..,\infty)$.


\begin{thm}\label{T3}
If $$u \in \mathcal D_0^\mu(\underbrace{1,...,1}_{k \, \, times},\infty..,\infty)$$ for some $0\le k \le n-2$ then
$$\mathcal S'_h(u) \subset \{ |(x_{k+1},..,x_{n-1})| \le C h^\beta  \},\quad \quad \quad \beta:=\frac{1}{2(n+1-k+\alpha)}>0,$$
with $C$ large depending only on $n$, $\alpha$, $\mu$ and $k$.
\end{thm}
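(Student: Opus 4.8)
The plan is to argue by induction on $k$, the number of ``bounded'' boundary directions. For the base case $k=0$ the boundary data is essentially the constant $1$, so $u$ has a pure singularity at the origin; here the section $S_h$ is determined by the behaviour of $u$ near $0$, and one shows directly from the equation $\det D^2u=x_n^\alpha$ together with $B_\mu(x^*)\subset\Omega$ and the vanishing of $\nabla u(0)$ that $S_h$ is trapped, after a sliding, inside a set of the type $\{|x'|^2\lesssim h^{2\beta}, \ x_n\lesssim h^{\text{(something)}}\}$. The exponent $\beta=\frac{1}{2(n+1-k+\alpha)}$ is exactly the power dictated by balancing volume: a section that is a box of sidelengths $s$ in the $k$ flat tangential directions, $h^{1/2}$ in the remaining $n-1-k$ tangential directions, and $d_h$ in the $x_n$-direction, subject to the Monge--Amp\`ere relation $|S_h|^2 d_h^\alpha\sim h^n$ and $d_h\sim h^{1/2}$ is false in this regime --- rather $d_h$ is comparable to $s$ and to $h^{1/2}$ in the other directions --- and then $s^{k+2+\alpha}(h^{1/2})^{2(n-1-k)}\sim h^n$ forces $s\sim h^{\beta}$. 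So the first step I would carry out carefully is this dimensional/volume bookkeeping: use Proposition \ref{p1} (the relation $|S_h|^2 d_h^\alpha\sim h^n$ and the ellipsoidal normalization) to reduce the claim to a bound on the $k$ ``slow'' semiaxes of the normalized section.

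Second, for the inductive step I would peel off one direction. Suppose the estimate holds for $k-1$. Given $u\in\mathcal D_0^\mu(1,\dots,1,\infty,\dots,\infty)$ with $k$ ones, I would look at the section $S_t(u)$ for $t$ of order a universal constant and rescale it. The normalized function $v$ of $u$ in $S_t$ lives in a class of the same type but with the $a_i$'s rescaled; because the data grows quadratically (rate $1$) in the $x_1,\dots,x_k$ directions, after the $h^{-1/2}$-type dilation in the normalization the first coordinate's growth rate $a_1$ actually \emph{increases} like $t^{-1/2}$ (more precisely, the normalized semiaxis $d_1/t^{1/2}$), so that after finitely many dyadic steps the effective number of genuinely ``bounded'' directions drops, and one can invoke Proposition \ref{p3} in dimension $k-1$ (or the inductive hypothesis of the present theorem). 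The iteration is the same mechanism as in Lemma \ref{l3.1}: each application of the lower-dimensional result shrinks $\max_{i\le k} a_i$ by a definite factor at the cost of passing to $S_{th}$ with $t\in[\bar c,1]$, and summing the geometric series controls $\mathcal S_h'(u)$ for all small $h$, with the loss in the exponent accumulating to exactly $\beta$.

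The technical engine behind both steps is the pair of Pogorelov-type interior estimates announced for Section \ref{s5}, applied to the normalized solutions: an upper bound $D^2 v\le C$ (equivalently $S_h(v)$ not too thin) obtained by barriers built from $x_n^{2+\alpha}$ and the quadratic data, and a matching lower bound on the slow tangential second derivatives away from the singular direction. Concretely I would: (i) build a supersolution of the form (quadratic in $x'$) $+\,c\,x_n^{2+\alpha}+$ (linear) that lies above $u$ on $\partial\{u<h\}$, using the maximum principle stated in Section \ref{s4} in the boundary-value sense, to get the containment $S_h\subset\{\text{half-ellipsoid}\}$; (ii) build a subsolution of the same shape to get the reverse containment and in particular to bound the $x_n$-extent $d_h$ from above and below; (iii) feed the resulting two-sided bound on the geometry of $S_h$, together with $\det D^2 u=x_n^\alpha$, into the volume identity to extract the bound on the slow semiaxes. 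The main obstacle I expect is step (ii), the lower bound on the tangential second derivatives: as the introduction emphasizes, the tangential pure second derivatives are only subsolutions of the linearized operator, so one cannot simply apply a minimum principle to them. Overcoming this is precisely where the Pogorelov estimates of Section \ref{s5} and the tangent-cone ideas from \cite{S2} must do the work; on the level of this plan I would treat those estimates as black boxes and concentrate on verifying that the class $\mathcal D_0^\mu$ with $k$ ones satisfies their hypotheses after normalization, and that the exponent produced by the iteration is the asserted $\beta=\frac{1}{2(n+1-k+\alpha)}$.
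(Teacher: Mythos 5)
Your outline captures the correct high-level skeleton (induction on $k$, affine normalization of the sections, volume bookkeeping via $|S_h|^2 d_h^\alpha\sim h^n$), but it leaves the heart of the argument unresolved and in several places proposes mechanisms that would not work.

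The central difficulty, which you correctly flag and then explicitly set aside, is the lower bound on the tangential Hessian in the $z=(x_{k+1},\dots,x_{n-1})$ directions. Your steps (i)--(iii) describe a pure barrier argument: a supersolution built from $|x'|^2$ and $x_n^{2+\alpha}$ for the upper containment, and a matching subsolution for the lower containment. The upper barrier is fine, but the subsolution approach for (ii) cannot succeed: as you note, the tangential second derivatives are only subsolutions of the linearized equation, and there is no comparison function of the proposed shape that forces a lower bound on them. The paper does something essentially different: it studies the tangent cone $\Gamma_u$ of $u$ at $0$ and its trace $\gamma_u(z)=\Gamma_u(y,z,1)$, shows using the rescalings from Lemma \ref{l5} that the sections of $\gamma_u$ around its minimum are balanced, and then proves the sharp two-sided quadratic estimate $c|z-z_o|^2\le \gamma_u\le C|z-z_o|^2$ near the minimum (Lemma \ref{l8}, Lemma \ref{l10}). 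The upper bound comes from Theorem \ref{Po} applied to $u-cx_n$ and its rescalings; the lower bound comes from Theorem \ref{abo} applied not to $u$ at all but to its \emph{Legendre transform} $u^*$, which satisfies $(u^*_n)^\alpha\det D^2u^*=1$ away from the coincidence set $K=\{u^*=0\}$, with $K$ exactly encoding the epigraph of $\gamma_u^*$. For $k\ge 1$ one also must recognize the obstacle structure $u^*\ge\psi_u^*(\xi_y)$ and then run an approximation argument (because $u^*$ is not strictly increasing in $\xi_n$) before Theorem \ref{abo} can be applied. None of this appears in your plan, and this is where the theorem is actually proved; treating the Pogorelov estimates as black boxes does not close the gap because they are not applied to $u$ or a renormalization of $u$ in a transparent way, but to $u-cx_n$ and to $u^*$ via a nontrivial passage to the tangent cone.

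Your description of the inductive mechanism is also off. You propose that, upon rescaling, the growth rates $a_i$ in the $y$-directions \emph{increase} like $t^{-1/2}$ so that ``bounded directions drop out.'' In fact the paper uses the inductive hypothesis (via Property $P(l)$ for $l\le k-1$, which is Proposition \ref{p3} in lower dimension) precisely to show the opposite: that the normalized semiaxes $a_i=d_i h^{-1/2}$ for $i\le k$ stay \emph{bounded}, so that after a sliding $T_h$ along the $y$-direction and a sliding $A_h$ along $x_n=0$ the renormalized solution stays inside a fixed class $\mathcal D_0^{\bar\mu}(1,\dots,1,\infty,\dots,\infty)$. Finally, the bookkeeping paragraph has the roles of $y$ and $z$ interchanged and posits $d_h\sim s$, whereas the correct relation coming from the quadratic growth of $\gamma_u$ is $s\sim d_n^{1/2}h^{1/2}$; plugging this into $\left(\prod_1^{n-1}d_i^2\right)d_n^{2+\alpha}=h^n$ with $d_i\sim h^{1/2}$ for $i\le k$ gives $d_n\sim h^{1/(n+1-k+\alpha)}$ and hence $s/h^{1/2}\sim h^\beta$ with $\beta=\tfrac{1}{2(n+1-k+\alpha)}$, which is the claimed exponent. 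So even the heuristic derivation of $\beta$ needs to be corrected.
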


Theorem \ref{T3} holds for $\alpha=0$ as well. Its proof will be completed in Section \ref{s6} by induction on $k$ and we will see that it applies also for $\alpha=0$.

\begin{lem}\label{l4.1}
Theorem \ref{T3} implies Proposition \ref{p3}.
\end{lem}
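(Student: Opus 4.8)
\emph{Overview.} The plan is to prove Proposition \ref{p3} by induction on the number $k$ of finite parameters, using Theorem \ref{T3} as the driving estimate in the ``model'' case where all parameters equal $1$, and using the inductive hypothesis to absorb those parameters $a_i$ that are very large. Fix $M$ and $k$, assume Proposition \ref{p3} is already known for all smaller values of $k$, and split the argument according to whether $a_k=\max_i a_i$ is moderate or large, with a threshold $\Lambda=\Lambda(k,M)\ge 1$ to be fixed in the compactness step.

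\emph{Moderate parameters.} Suppose $a_k<\Lambda$, so every $a_i\in[\mu,\Lambda)$. Then $D^2_{x'}\psi_u$ lies between $\mu\,\mathrm{diag}(a_i^2)\ge \mu^3 I$ and $\mu^{-1}\,\mathrm{diag}(a_i^2)\le \mu^{-1}\Lambda^2 I$, so with $\mu':=\min(\mu^3,\mu\Lambda^{-2})$ one has $\mu' I\le D^2_{x'}\psi_u\le (\mu')^{-1}I$. Since replacing $\mu$ by the smaller constant $\mu'$ only weakens the domain and normalization requirements, $u$ belongs to $\mathcal D_0^{\mu'}(1,\dots,1,\infty,\dots,\infty)$ (with $k$ ones). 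Theorem \ref{T3} then yields
$$\mathcal S'_h(u)\subset\{\, |(x_{k+1},\dots,x_{n-1})|\le C h^{\beta}\,\},\qquad \beta=\frac{1}{2(n+1-k+\alpha)},$$
with $C=C(n,\alpha,\mu',k)$, hence $C=C(n,\alpha,\mu,k,M)$ once $\Lambda$ is pinned down. Taking $t_0:=\min\!\big(1,(CM)^{-1/\beta}\big)$ gives $\mathcal S'_{t_0}(u)\subset\{\,|(x_{k+1},\dots,x_{n-1})|\le 1/M\,\}$, with $t_0$ bounded below in terms of $n,\alpha,\mu,k,M$ only. The base case $k=0$ is exactly this argument ($\psi_u\equiv 0$, so $u\in\mathcal D_0^\mu(\infty,\dots,\infty)$ qualifies for Theorem \ref{T3} with $k=0$ directly) and uses no induction.

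\emph{Large parameters, by compactness.} I claim that if $\Lambda$ is chosen large enough, depending on $n,\alpha,\mu,k,M$, then whenever $a_k\ge\Lambda$ the conclusion of Proposition \ref{p3} holds for some $t$ in the fixed interval $[\bar c_1,1]$, where $\bar c_1:=\min_{1\le m\le k}\bar c(m-1,2M)$. If this failed, there would be a sequence $u_j\in\mathcal D_0^\mu(a_1^j,\dots,a_k^j,\infty,\dots,\infty)$ with $a_k^j\to\infty$ violating the conclusion for every such $t$; passing to a subsequence, $a_i^j\to a_i\in[\mu,\infty]$ for each $i$, with $m-1\le k-1$ finite limits (the infinite limits form a tail, since the $a_i^j$ are increasing in $i$). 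By the compactness Lemma \ref{l4} and Remark \ref{r3}, $u_j$ converges to some $\tilde u\in\mathcal D_0^\mu(a_1,\dots,a_{m-1},\infty,\dots,\infty)$, and the inductive hypothesis (Proposition \ref{p3} for index $m-1<k$, applied with $2M$ in place of $M$) produces $t\in[\bar c(m-1,2M),1]\subset[\bar c_1,1]$ with $\mathcal S'_t(\tilde u)\subset\{\,|(x_{m},\dots,x_{n-1})|\le 1/(2M)\,\}$, in particular lying strictly inside $\{\,|(x_{k+1},\dots,x_{n-1})|\le 1/M\,\}$. Since the sections $S_t$ and their centers of mass, for solutions of $\det D^2 v=x_n^\alpha$ in this class, are uniformly nondegenerate — the barrier estimate \eqref{13} keeps them from collapsing near $\{x_n=0\}$ — and depend continuously on $v$ under the convergence via the closedness property of \cite{S1}, the normalized diameters satisfy $\mathcal S'_t(u_j)\to\mathcal S'_t(\tilde u)$; hence $\mathcal S'_t(u_j)\subset\{\,|(x_{k+1},\dots,x_{n-1})|\le 1/M\,\}$ for $j$ large, a contradiction. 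Setting $\bar c(k,M):=\min(t_0,\bar c_1)$ completes the inductive step, and $\bar c(M):=\min_k\bar c(k,M)$ may be used at the end.

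\emph{Main obstacle.} The delicate point is the stability used in the large-parameter case: one must verify that the normalized diameter $\mathcal S'_t$, defined through the center of mass of $S_t$, passes to the limit even though $\tilde u$ carries discontinuous boundary data. This is precisely what the boundary-value formalism of \cite{S1} (the maximum principle and closedness under limits) together with the nondegeneracy bound \eqref{13} are designed to supply; the remainder is bookkeeping of constants — choosing $\Lambda$, $\mu'$, and the interval $[\bar c_1,1]$ — to guarantee that $\bar c(k,M)$ depends only on $n,\alpha,\mu,M,k$ (the mild growth of $M$ under the recursion is harmless since it terminates after at most $n$ steps).
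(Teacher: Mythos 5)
Your proof is correct and follows essentially the same route as the paper: induction on $k$, with a moderate-parameter case handled directly by Theorem~\ref{T3} (after absorbing the finite $a_i$'s into a smaller $\tilde\mu$), and a large-parameter case handled by a compactness argument invoking the inductive hypothesis. The only cosmetic difference is that the paper factors the compactness step into a separately stated ``property $P(k-1)$'' (deduced by the same limiting argument as in the passage from Proposition~\ref{p3} to Proposition~\ref{p2}), whereas you inline the contradiction-and-subsequence argument and spell out the stability of $\mathcal S'_t$ under convergence; the substance is identical.
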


\begin{proof}
We prove Proposition \ref{p3} by induction.

{\it Case $k=0$:} We apply Theorem \ref{T3} for $k=0$ and obtain that
 $$\mathcal S'_t(u) \subset \{ |(x_1,..,x_{n-1})| \le C t^\beta \le \frac 1 M \},$$ by choosing $t$ small depending on $M$, $\mu$, $n$, $\alpha$.

{\it Case $k-1 \Rightarrow k$.} We assume Proposition \ref{p3} holds for $k-1$ with $1 \le k \le n-2$ and we prove it holds also for $k$. By compactness (see Remark \ref{r3}) we know that the following property holds

{\it Property $P(k-1)$:}

There exists $C_0:=C_0(M,\mu,k,n,\alpha)$ such that if
$$u \in D_0^\mu(a_1,..,a_{n-1}), \quad \quad \mbox{with $a_k \ge C_0$},$$
for some $a_i \in [\mu,\infty) \cup \{\infty \}$, then
$$\mathcal S_t'(u) \subset \{|(x_k,..,x_{n-1})| \le \frac 1 M \},$$
for some $t\in [c_k,1]$ with $c_k$ depending on the parameters above.

\

Thus when $a_{k} \ge C_0$ the conclusion for $k$ i.e.
$$\mathcal S_t'(u) \subset \{|(x_{k+1},..,x_{n-1})| \le \frac 1 M \}$$
is already satisfied from the property $P(k-1)$.
It remains to prove the statement only when $u\in \mathcal D_0^\mu(a_1,..,a_k,\infty,..,\infty)$ and $a_k \le C_0$. In this case we can write
$$u \in \mathcal D_0^{\tilde \mu} (\underbrace{1,...,1}_{k \, \, times},\infty..,\infty)$$
for some $\tilde \mu$ small depending on $M$, $\mu$, $n$, $k$, $\alpha$.
 Now we can apply Theorem \ref{T3} and find that
 $$S_t'(u)\subset \{|(x_{k+1},..,x_{n-1}| \le C(\tilde \mu) t^\beta  \le \frac 1 M\},$$ if we choose $t$ small enough depending on $M$, $\mu$, $n$, $k$, $\alpha$.

\end{proof}

\begin{rem}\label{r2}
In the proof above we showed that if Theorem \ref{T3} holds for all $l\le k$, for some $k$ satisfying $0 \le k \le n-2$, then Proposition \ref{p2} holds for all $l\le k$ as well.
\end{rem}

We conclude this section with some results for normalized solutions of $u \in \mathcal D_0^\mu$ in $S_h(u)$, that are versions of Proposition \ref{p1} for the class $\mathcal D_0^\mu.$ Below we think of $\mu$, $\alpha$, $n$ as being fixed constants, and we refer to other positive constants depending only on $\mu$, $\alpha$ and $n$ as universal constants.

\begin{lem}\label{l6}
Assume $$u \in \mathcal D_0^\mu (\infty,...,\infty).$$
For each $h \in (0,1]$, after a rotation (relabeling) of the $x'$ coordinates, there exist a sliding $A_h$ along $x_n=0$, and a diagonal matrix $D_h$
$$D_h=diag (d_1,d_2,..,d_n), \quad \quad \mbox{with} \quad \left(\prod_{i=1}^n  d_i^2 \right ) \, \, d_n^\alpha=h^n,$$
such that the normalized solution
$$u_h(x):=\frac 1 h u(A_h D_h x) \quad \quad \mbox{satisfies} \quad u_h|_{S_1(u_h)} \in \mathcal D_0^{\bar \mu}(\infty,..,\infty),$$
with $\bar \mu>0$ universal.
Moreover if $x_h^*$ denotes the center of mass of $S_h(u)$ then, $$c d_n \le x_h^* \cdot e_n \le C d_n \quad \mbox{and} \quad ch^{3/4} \le d_n \le C h^\delta,$$
with $c$, $C$, $\delta$ universal.
\end{lem}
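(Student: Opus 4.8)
The plan is to run the argument of Proposition \ref{p1} essentially verbatim, with the only change being that the role played there by the uniform convexity of the boundary data $\ph$ is now played merely by the normalization $0\le u\le 1$ together with $u=1$ on $\p\Om$ away from the origin. This is enough for every barrier comparison, because the lower barriers $w_1$, $w_2$, $w_3$ of Lemma \ref{l1} and of the proof of Proposition \ref{p1} are built with universal constants and only need to be dominated by $u$ on the portion of $\p\Om$ where $u<1$, which for $u\in\mathcal D_0^\mu(\infty,\ldots,\infty)$ is a neighborhood of the origin on which $u\ge 0= w_i$; on the rest of $\p\Om$ one has $u=1$ which trivially dominates the (small) barriers. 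Recall also that the maximum principle holds in the generalized sense for functions with boundary discontinuities, and that $\det D^2 u=x_n^\alpha\le\mu^{-\alpha}$ on $\Om\subset B_{1/\mu}^+$, so all the comparisons go through with universal constants.

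First I would record, for $S_h=S_h(u)$, the geometric estimates produced by these barriers exactly as in Proposition \ref{p1}: the inclusion $S_h\subset\{|x'|\le Ch^{1/2}\}\cup\{x_n\ge c|x'|^{3/2}\}$ from $w_1$ (i.e.\ \eqref{1}); the lower bound $d_h:=x^*_h\cdot e_n\ge h^{3/4}$ from $w_2$ together with $\nabla u(0)=0$ (this is \eqref{13} applied to $u$ itself); the upper bound $d_h\le Ch^\delta$ for a universal $\delta<1$, obtained as in \eqref{3} by noting that if $d_h$ were too large the convex hull of the origin and a transversal slice of $S_h$ would contain too large an ellipsoid inside the region $\{x_n\gtrsim d_h\}$, where $\det D^2 u\gtrsim d_h^\alpha$, contradicting the volume estimate; and finally the two-sided bound $c_0 h^n\le |S_h|^2 d_h^\alpha\le C_0 h^n$, whose lower part uses John's lemma and the barrier $w_3$ and whose upper part uses the same ellipsoid-versus-quadratic comparison (comparing $u$ against the quadratic polynomial equal to $h$ on $\p E$).

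Next I would normalize. Choose the sliding $A_h x=x-\tau_h x_n$ with $\tau_h={x^*_h}'/(x^*_h\cdot e_n)$ so that the center of mass of the slid section lies on the $x_n$-axis; by \eqref{1} one gets $|\tau_h|\le Ch^{-1/4}$ as in \eqref{5}. After a rotation of the $x'$-coordinates, write the transversal slice $\{x_n=d_h\}$ of the slid section as equivalent to an ellipsoid with semi-axes $d_1\le\cdots\le d_{n-1}$, and define $d_n$ by $\bigl(\prod_1^n d_i^2\bigr)d_n^\alpha=h^n$, i.e.\ $\prod_1^{n-1}d_i^2\cdot d_n^{2+\alpha}=h^n$ as in \eqref{dn}. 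As in \eqref{7.1}, the estimate $|S_h|^2 d_h^\alpha\sim h^n$ combined with the John ellipsoid then gives $cd_n\le d_h\le Cd_n$. Setting $D_h=\mathrm{diag}(d_1,\ldots,d_n)$ and $u_h(x)=\tfrac1h u(A_h D_h x)$, a direct computation using the product relation and the fact that a sliding along $x_n=0$ fixes the $e_n$-coordinate gives $\det D^2 u_h(x)=d_n^{-\alpha}\det D^2 u(A_hD_hx)=x_n^\alpha$; the section $S_1(u_h)$ is the image of $S_h$ and hence sits between balls of universal radius by the inclusions above; $u_h=1$ on the part of $\p\Om_h$ away from the origin, $u_h(0)=0$, $0\in\p\Om_h$; and $\nabla u_h(0)=0$ because property \eqref{13} is invariant under the normalization. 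Therefore $u_h|_{S_1(u_h)}\in\mathcal D_0^{\bar\mu}(\infty,\ldots,\infty)$ for a universal $\bar\mu$, which, together with the bounds $cd_n\le x^*_h\cdot e_n\le Cd_n$ and $ch^{3/4}\le d_n\le Ch^\delta$, is the assertion of Lemma \ref{l6}.

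The step I expect to be the main obstacle is the quantitative upper bound $d_h\le Ch^\delta$ with a \emph{universal} exponent. In the setting of Proposition \ref{p1} the uniform convexity of $\ph$ forces a definite transversal fatness of $S_h$ near $\p\Om$, which directly caps $d_h$ by $h^{1/(2+\alpha)}$; here the only boundary information is the jump of $u$ from $0$ at the origin to $1$ elsewhere, so one cannot read off the transversal size of $S_h$ from the boundary data. Instead one must extract it from the interior ball $B_\mu(x^*)\subset\Om$ and from the already-established lower bound $d_h\ge h^{3/4}$, and then balance this transversal size against the volume identity $|S_h|^2 d_h^\alpha\le C h^n$ together with the nondegeneracy $\det D^2 u\gtrsim d_h^\alpha$ on $\{x_n\gtrsim d_h\}$; squeezing a clean universal $\delta$ out of this balancing is the delicate point, and everything else is a routine transcription of Section \ref{s3}.
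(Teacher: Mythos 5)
Most of your transcription of the Proposition \ref{p1} machinery is sound, and you are right that the barriers and the John-ellipsoid volume estimates go through with $\ph$ replaced by the degenerate boundary data: the comparison only needs $u\ge 0$ near the origin and $u=1$ on the rest of $\p\Om$, which dominates the small barriers, and the center-of-mass normalization plus the two-sided bound $c\le x^*\cdot e_n\le C$ (lower bound by a barrier of the form $c|x'|^2+c^{1-n}x_n^2+tx_n$, upper bound by the John-ellipsoid volume estimate) does put $u_h|_{S_1(u_h)}$ into $\mathcal D_0^{\bar\mu}(\infty,\ldots,\infty)$ with a universal $\bar\mu$.

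But the step you flag as the obstacle, the upper bound $d_n\le Ch^\delta$, is a genuine gap, and your proposed fix does not close it. The argument behind \eqref{3} in Proposition \ref{p1} relies on $S_h$ containing the transversal cap $\p\Om\cap B_{ch^{1/2}}$, which came from the quadratic boundary data; in $\mathcal D_0^\mu(\infty,\ldots,\infty)$ the boundary data is $1$ away from the origin, so $S_h$ touches $\p\Om$ only at $\{0\}$ and there is no cap. Your suggested "balancing" against $B_\mu(x^*)\subset\Om$ and the volume identity cannot produce a universal $\delta$: the interior ball is a unit-scale feature and constrains nothing at scale $h\to 0$, and the identity $|S_h|^2 d_h^\alpha\sim h^n$ is perfectly compatible with $d_h$ staying bounded below as long as the transversal axes $d_i$ shrink fast enough, so it does not by itself force the height to decay. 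The paper's mechanism is different and softer: having established that all normalized solutions $u_h$ live in the same universal class $\mathcal D_0^{\bar\mu}(\infty,\ldots,\infty)$, it invokes the \emph{compactness} of that class (Lemma \ref{l4} with Remark \ref{r3}) to get a uniform decay $b_v(1/2)/b_v(1)\le 1-c$ for every $v$ in the class, where $b_v(t)=\max_{S_t(v)}x_n$; then since $b_u(h/2)/b_u(h)=b_{u_h}(1/2)/b_{u_h}(1)$ and each $u_h$ stays in the class, iteration gives $b_u(h)\le Ch^\delta$ with $\delta$ universal, and the identification $d_n\sim x_h^*\cdot e_n\sim b_u(h)$ finishes the bound. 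You would need to add this compactness step; it cannot be squeezed out of the barriers or the volume identity alone.
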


\begin{proof}
This is a simplified version of Proposition \ref{p1} since the behavior of $\det D^2 u_h$ and the boundary data of $u_h$ is left invariant under composition of the affine transformations above.

As in the proof of Proposition \ref{p1}, we choose $d_1$,...,$d_{n-1}$ as being the lengths of the axes of the ellipsoid which is equivalent to $S_h \cap \{x_n=x_h^*\cdot e_n\}$. After a rotation we may assume that its axes are parallel to the coordinate axes. We choose $d_n$ in terms of $d_1,\ldots,d_{n-1}$ so that it satisfies the above identity for $D_h$. We let $A_h$ so that $x^*$, the center of mass of $S_1(u_h)$, lies on the $x_n$ axis, i.e. $$x^*=\frac {x^*_h \cdot e_n}{d_n} \,\, \,  e_n.$$

By construction, the restriction of $u_h$ to $S_1(u_h)$ satisfies
$$u_h \ge 0, \quad u_h(0)=0, \quad \nabla u_h(0)=0, \quad \det D^2 u_h=x_n^\alpha,$$
$$ u_h = \ph \quad \mbox{on $\p S_1(u_h)$,} \quad \mbox{with $\ph=1$ on $\p S_1 \setminus\{0\}$, and $\ph(0)=0$,}$$
and when we restrict to the $n-1$ dimensional space passing through $x^*$ we have
$$ B_1'  \subset S_1(u_h) \subset C(n) B'_1 \quad \quad \mbox{on the hyperplane $\{x_n=x^*\cdot e_n\}$.}$$

In order to prove that $u_h$ belongs to the class $\mathcal D_0^{\bar \mu} $ above it remains to show that $$c \le x^* \cdot e_n \le C.$$

We obtain this by choosing appropriate lower and upper barriers for $u_h$ in $S_1(u_h)$. Indeed, if $x^* \cdot e_n$ is very small then we obtain $u_h \ge w_3$ where $w_3$ is the barrier
$$w_3(x)=c|x'|^2 + c^{1-n} x_n^2 + t x_n,$$
for some $t>0$ and we contradict $\nabla u_h(0)=0$.
On the other hand if $x^* \cdot e_n$ is very large then $S_1(u_h)$ contains an ellipsoid $E$ (centered at $x^*$) of large volume and we contradict that $u_h \ge 0$ similarly as in \eqref{4}. This proves that $$u_h \in \mathcal D_0^{\bar \mu}(\infty,..,\infty) \quad \mbox{ for some $\bar \mu$ universal.}$$

The inequality $d_n \ge c h^{3/4}$ follows as in the proof of Proposition \ref{p2}, (see also proof of Lemma \ref{l4}). In order to prove the upper bound on $d_n$ we remark that $d_n \sim x^*_h \cdot e_n \sim b_u(h)$ where $b_u(h)$ represents the height of $S_h(u)$ i.e. $$b_u(h):=\max_{x \in S_h(u)} x_n.$$ By the compactness of the class $\mathcal D_0^\mu$ we easily obtain that $$\frac{b_v(1/2)}{b_v(1)} \le 1-c \quad \quad \mbox{for any $v\in \mathcal D_0^\mu$.}$$
This implies that $$\frac{b_u(h/2)}{b_u(h)} =\frac{b_{u_h}(1/2)}{b_{u_h}(1)} \le 1-c_1, \quad \quad \mbox{for all $h \le 1$},$$
with $c_1$ universal, hence $b_u(h) \le C h^\delta$ which finishes our proof.

\end{proof}

{\it Remark:}
The proof shows in fact that $\bar \mu$ depends only on $\alpha$ and $n$ since we can choose the constant $c$ in $w_3$ to depend only on $n$.
Also the inequality $b(u) \le C h^\delta$ implies that
\begin{equation}\label{31}
  u \ge c \, {x_n}^{1/ \delta}.
\end{equation}

Below we prove a similar lemma as above for functions $u \in \mathcal D_0^\mu(1,..,1,\infty,..,\infty)$. Before we state our lemma we introduce some notation.

\

{\bf Notation.} Fix $1\le k \le n-2$. We denote points in $\R^n$ by
$$x=(y,z,x_n) \quad \quad y:=(x_1,..x_k) \in \R^k, \quad z:=(x_{k+1},..,x_{n-1}) \in \R^{n-1-k}.$$

We say that a linear transformation $T:\R^n \to \R^n$ is a {\it sliding along $y$ direction} if
$$Tx=x + \nu_1z_1 +\ldots + \nu_{n-k-1}z_{n-k-1}$$ with
$$\nu_1,..,\nu_{n-k-1} \in span\{e_1,..,e_k\}.$$
We see that $T$ leaves the $(z,x_n)$ components invariant together with the subspace $(y,0,0)$. Clearly if $T$ is a sliding along the $y$ direction then so is $T^{-1}$ and $\det T=1$.

We will use the following linear algebra fact about transformations $T$ as above.

Assume $E_{x'} \subset \R^{n-1}$ is an ellipsoid in $x'=(y,z)$ variables (with center of mass at $0$). Then there exists $T$ a sliding along $y$-variable such that $$T E_{x'}=E_y \times E_z,$$ with $E_y$, $E_z$ two ellipsoids in the $y$ respectively $z$ variables. Here the ellipsoid $E_y$ is obtained by intersecting $E$ with the $y$-subspace. Using John's lemma we conclude that if $\Omega'\subset \R^{n-1}$ is a bounded convex set (with center of mass at the origin), there exists $T$ such that $T \Omega'$ is equivalent to a product of ellipsoids in the $y$ and $z$ variables i.e.
$$E_y \times E_z \subset T \Omega' \subset C(n) E_y \times E_z.$$

\begin{lem}\label{l5}
Assume Theorem \ref{T3} holds for all $l \le k-1$, for some $k$ with $1 \le k \le n-2$, and let
$$u \in \mathcal D_0^{\mu} (\underbrace{1,...,1}_{k \, \, times},\infty..,\infty).$$
For each $h \in (0,1]$, after a rotation (relabeling) of the $y$ respectively $z$ coordinates, there exist a sliding $T_h$ along the $y$ variable, and a sliding $A_h$ along $x_n=0$, and a diagonal matrix $D_h$
$$D_h=diag (d_1,d_2,..,d_n), \quad \quad \mbox{with} \quad \left(\prod_{i=1}^n  d_i^2 \right ) \, \, d_n^\alpha=h^n,$$
such that the normalized solution
$$u_h(x):=\frac 1 h u(T_h A_h D_h x) \quad \quad \mbox{satisfies} \quad u_h|_{S_1(u_h)} \in \mathcal D_0^{\bar \mu} (\underbrace{1,...,1}_{k \, \, times},\infty..,\infty).$$
Moreover if $x_h^*$ denotes the center of mass of $S_h(u)$ then,
$$c d_n \le x_h^* \cdot e_n \le C d_n, \quad  \quad \quad ch^{3/4} \le d_n \le C h^\delta,$$
 $$\mbox{and} \quad c \le d_i h^{-\frac 12} \le C \quad \mbox{for $i\le k$}.$$
The constants $\bar \mu$, $c$, $C$, $\delta$ above depend on $\mu$, $n$, $\alpha$ and $k$.
\end{lem}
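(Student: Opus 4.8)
The plan is to mimic the proof of Lemma \ref{l6}, with the extra feature that the boundary data is nontrivial in the $y=(x_1,\dots,x_k)$ variables and must be renormalized there as well. The new ingredient compared to Lemma \ref{l6} is that we must first separate the $y$ and $z$ variables using the linear-algebra fact stated just above the lemma, and then use the inductive hypothesis (Theorem \ref{T3} for $l\le k-1$) to control the $z$-directions of the section so that the renormalized domain still fits in $B^+_{1/\bar\mu}$.

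Concretely, first I would choose $d_1,\dots,d_{n-1}$ as the lengths of the axes of the ellipsoid equivalent to the slice $S_h(u)\cap\{x_n=x_h^*\cdot e_n\}$; but before doing so I apply the linear-algebra fact to the $(n-1)$-dimensional convex set $\pi(S_h(u))$ (projected to $x_n=\mathrm{const}$, centered), obtaining a sliding $T_h$ along the $y$ variable so that $T_h$ of this slice is equivalent to $E_y\times E_z$. After a rotation of the $y$-coordinates and of the $z$-coordinates we may take $E_y,E_z$ to have axes parallel to the coordinate axes, and we let $d_1,\dots,d_k$ be the axes of $E_y$ and $d_{k+1},\dots,d_{n-1}$ the axes of $E_z$. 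We define $d_n$ from $d_1,\dots,d_{n-1}$ by $\left(\prod_{i=1}^n d_i^2\right)d_n^\alpha=h^n$, and $A_h$ so that the center of mass of $S_1(u_h)$ lies on the $x_n$-axis, with $u_h(x):=\frac1h u(T_h A_h D_h x)$. As in Lemma \ref{l6}, the quantities $\det D^2 u_h=x_n^\alpha$ and the structure of the boundary data are invariant under the affine maps $T_h$, $A_h$, $D_h$ (a sliding in $y$, a sliding along $x_n=0$, and a diagonal rescaling all preserve the equation $\det D^2 u=x_n^\alpha$ and the class of boundary data $\psi_u$ on $G$), so $u_h$ has the required form $\psi_{u_h}=1$ on $\p S_1\setminus G$, $\psi_{u_h}$ convex in $(x_1,\dots,x_k)$ on $G$, with $a_i=d_ih^{-1/2}$ for $i\le k$.

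The bounds $c\,d_n\le x_h^*\cdot e_n\le C\,d_n$ and $ch^{3/4}\le d_n\le Ch^\delta$ follow exactly as in Lemmas \ref{l4} and \ref{l6}: the lower bound $d_n\ge ch^{3/4}$ comes from the barrier $w_2$ (using $v\ge c|x'|^2$ only where $\{v<1\}$ touches the boundary, which holds for $\mathcal D_0^\mu$), the relation $d_n\sim x_h^*\cdot e_n\sim b_u(h)$ and the strict decay $b_u(h/2)/b_u(h)\le 1-c_1$ from compactness of $\mathcal D_0^\mu$ give $d_n\le Ch^\delta$. For the bounds $c\le d_ih^{-1/2}\le C$ when $i\le k$: the lower bound $d_i\ge ch^{1/2}$ follows from the upper bound $u\le c|x'|^2$ on $G$ (the section cannot be too thin in the $y$-directions where the boundary data is comparable to $|x'|^2$ by H3-type hypothesis), together with the convexity bound $D^2_{x'}\psi_u\ge\mu\,\mathcal N_k$ with $a_i\ge\mu$ forcing $d_i\le Ch^{1/2}$; the upper bound $d_i\le Ch^{1/2}$ is where Theorem \ref{T3} for $l\le k-1$ enters — actually it is the $z$-directions $d_j$, $j>k$, that Theorem \ref{T3} controls (giving $d_j h^{-1/2}\le Ch^\beta\to 0$, i.e. the $z$-slice is much thinner than the $y$-slice so that after the diagonal renormalization the domain $\Omega_{u_h}$ still sits inside $B^+_{1/\bar\mu}$), whereas for $i\le k$ both bounds are already "universal" because $u\in\mathcal D_0^\mu(1,\dots,1,\infty,\dots,\infty)$ pins $a_1,\dots,a_k$ between $\mu$ and $\mu^{-1}$ up to the renormalization.

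The main obstacle I expect is verifying that $u_h|_{S_1(u_h)}\in\mathcal D_0^{\bar\mu}(1,\dots,1,\infty,\dots,\infty)$ with a \emph{universal} $\bar\mu$ — in particular that the domain $\Omega_{u_h}=D_h^{-1}A_h^{-1}T_h^{-1}\Omega$ contains a ball $B_{\bar\mu}(x^*)$ and is contained in $B^+_{1/\bar\mu}$. The containment upward is straightforward from barriers ($w_3$-type, contradicting $\nabla u_h(0)=0$ or $u_h\ge0$), but the fact that the $z$-directions of $S_1(u_h)$ are comparable to $1$ (not degenerating to a lower-dimensional slab) is exactly the content of Theorem \ref{T3} at level $k-1$: it guarantees $\mathcal S'_h(u)$ is trapped in $\{|z|\le Ch^\beta\}$, which after renormalization by $D_h$ keeps the $z$-width bounded; one must check this renormalized $z$-width is also bounded \emph{below} (so $x^*$ has a ball around it), which again follows from John's lemma applied to the convex hull of $x_h^*$ and the relevant portion of $\p\Omega$ near $0$, together with the graph bound $g_v\le\sigma|x'|^2$.
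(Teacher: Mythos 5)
Your outline gets the geometric setup right and matches the paper: use the linear-algebra fact to find a sliding $T_h$ along the $y$-variable so that the slice $S_h(\tilde u)\cap\{x_n=\tilde x_h^*\cdot e_n\}$ splits as a product $E_y\times E_z$; take $d_1,\dots,d_{n-1}$ to be the axes; define $d_n$ from the volume identity; pick $A_h$ to center the section on the $x_n$-axis; observe that $\det D^2$ and the structure of the boundary data are invariant; and get the $d_n$ and $x_h^*\cdot e_n$ bounds exactly as in Lemma~\ref{l6}.

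But there is a genuine gap in the step you dismiss as automatic, and it is precisely the step where the inductive hypothesis is needed. You write that the bounds $c\le d_i h^{-1/2}\le C$ for $i\le k$ are \emph{already universal} because the original $u$ has $a_i=1$, i.e.\ $\mu I\le D^2\psi_u\le\mu^{-1}I$. That is false. What the construction gives is $u_h\in\mathcal D_0^{\tilde\mu}(a_1(h),\dots,a_k(h),\infty,\dots,\infty)$ with $a_i(h)=d_i(h)h^{-1/2}$: the rescaled boundary data $\psi_{u_h}(y)=\tfrac1h\psi_u(d_1y_1,\dots,d_ky_k)$ has second derivative bounds governed by $\mathcal N_k=\mathrm{diag}(a_1(h)^2,\dots,a_k(h)^2)$, not by $I$. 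The lower bound $a_i(h)\ge c$ follows easily from the boundary data (as you say, the $y$-ball $\{|y|\le ch^{1/2}\}\subset S_h(u)$). But nothing in the hypothesis prevents $a_i(h)\to\infty$ as $h\to0$: the section $S_h$ could a priori be very elongated in the $y$-directions relative to $h^{1/2}$. If $a_i(h)$ were unbounded, $u_h$ would not lie in any fixed class $\mathcal D_0^{\bar\mu}(1,\dots,1,\infty,\dots,\infty)$ with universal $\bar\mu$, and the lemma would fail.

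The missing argument is a decay/dichotomy iteration modeled on Lemma~\ref{l3.1}: one shows that there are universal constants $\bar C$, $\bar c$ such that
$$\max_{i\le k}a_i(h)\ge\bar C\quad\Longrightarrow\quad\max_{i\le k}a_i(th)\le\tfrac12\max_{i\le k}a_i(h)\quad\text{for some }t\in[\bar c,1],$$
which, combined with $\max a_i(1)\le C$, gives $\max_{i\le k}a_i(h)\le C$ for all $h$. The proof of this implication is where Theorem~\ref{T3} for $l\le k-1$ is used: by Remark~\ref{r2} and the compactness argument of Lemma~\ref{l4.1}, one has Property $P(l)$ for $l\le k-1$, namely that if $a_l\ge C_0$ for some $l\le k$ then $\mathcal S_t'(u_h)\subset\{|(x_l,\dots,x_{n-1})|\le 1/M\}$ for a fixed $t\in[\bar c,1]$. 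Applying this with $l$ chosen as the first index where $a_i(h)\ge C_0$ and unwinding through the relation $\{y: (y,0)\in\mathcal S'_{th}(u)\}=\mathrm{diag}(a_1,\dots,a_k)\{y:(y,0)\in\mathcal S'_t(u_h)\}$ yields the halving. Your description of what the inductive hypothesis controls (``the $z$-directions $d_j$, $j>k$'') is also off: by construction of $D_h$ the $z$-width of $S_1(u_h)$ is automatically of order one; what Property $P(l)$ actually controls is the projections onto $(x_l,\dots,x_{n-1})$ for $l\le k$, which is exactly what is needed to bound $a_l,\dots,a_k$ from above.
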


The lemma states that if $u$ satisfies the hypothesis of Theorem \ref{T3} then we can normalize it in $S_h$ (using also a sliding along $y$ variable) such that the normalized solution satisfies essentially to same hypothesis as $u$.

\begin{proof}
 First we remark that if we use an affine deformation $x \to TAx$ with $T$ sliding along $y$, $A$ sliding along $x_n=0$, and let
 $$\tilde u(x):=u(TAx)$$ then the intersection of the $y$-subspace (passing through the center of mass) with $S_h(u)$ is left invariant. Precisely we have
$$\mathcal S_t'(\tilde u) \cap\{z=0 \} \quad = \quad \mathcal S_t'(u) \cap \{z=0\} \quad \quad \quad \mbox{for any $t>0$.}$$
For each $h$ we let $T=T_h$ and $A=A_h$ such that $S_h(\tilde u)$ has the center of mass $\tilde x^*_h$ on the $x_n$ axis and $$S_h(\tilde u) \cap \{x_n=\tilde x ^*_h \cdot e_n\}$$ is equivalent to a product of ellipsoids $E_y \times E_z$.
After a rotation of the $y$ respectively $z$ coordinates we may assume that $E_y$, $E_z$ have axes of lengths $d_1 \le \ldots \le d_k$ respectively $d_{k+1},..,d_{n-1}$ parallel to the coordinate axes. From the boundary data of $u$ we know that $$\{(0,0,y)| \quad |y| \le c h^{1/2}\} \subset S_h(u) $$ which implies $$d_i \ge c h^{1/2} \quad \mbox{for $i=1,..,k$.}$$
We choose $d_n$ as before in terms of $d_1,\ldots,d_{n-1}$ so that it satisfies the above identity for $D_h$. We let $$u_h=\frac 1h \tilde u (D_h x),$$
and obtain
$$u_h \ge 0, \quad u_h(0)=0, \quad \nabla u_h(0)=0, \quad \det D^2 u_h=x_n^\alpha,$$
$$ u_h = \ph \quad \mbox{on $\p S_1(u_h)$,} \quad \mbox{with $\ph=1$ on $\p S_1 \setminus\{G_y\}$, and $\ph=\psi(y)$ on $G_y$,}$$
where $$G_y:=\{(y,0,0)| \quad \psi(y) \le 1\}$$
and $\psi$ is a nonnegative function in $y$ satisfying
$$\mu^{-1} \mathcal N_y \ge D^2_y \psi \ge \mu \mathcal N_y \quad \quad \mathcal N_k=diag(a_1^2,..,a_k^2), \quad a_i:=d_i h^{-\frac 12} \ge c.$$
Moreover, by construction, when we restrict to the $n-1$ dimensional space passing through $x^*$ the center of mass of $S_1(u_h)$ we have
$$ B_1'  \subset S_1(u_h) \subset C(n) B'_1 \quad \quad \mbox{on the hyperplane $\{x_n=x^*\cdot e_n\}$.}$$

The properties above imply that $$u_h \in \mathcal D_0^{\tilde \mu}(a_1,..,a_k, \infty,..,\infty) \quad \quad \mbox{for some $\tilde \mu$ universal.}  $$
Indeed, for this it suffices to prove that $$c \le x^* \cdot e_n \le C,$$ and this follows exactly as in the proof of Lemma \ref{l6}.

Since $u_h$ belongs to the class above, the bounds on $d_n$ follow in the same way as in Lemma \ref{l6}.

It remains to show that $a_i$, $1\le i \le k$ remain bounded above by a universal constant for all $h$.
From our hypothesis and Remark \ref{r2} we know that Proposition \ref{p3} holds for all $l$ with $l \le k-1$. Using compactness as in Lemma \ref{l4.1} this implies that the property $P(l)$ holds for all $l \le k-1$. Precisely,

there exists $C_0:=C_0(M,\mu,k,n,\alpha)$ such that if
$$v \in D_0^\mu(a_1,..,a_{n-1}), \quad \quad \mbox{with $a_l \ge C_0$, for some $l \le k$},$$
and some $a_i \in [\mu,\infty) \cup \{\infty \}$, then
$$\mathcal S_t'(v) \subset \{|(x_l,..,x_{n-1})| \le \frac 1 M \},$$
for some $t\in [c_k,1]$ with $c_k$ depending on the parameters above.

Now we argue as in Lemma \ref{l3.1}. For $i \le k$ denote by $d_i(h)$ and $a_i(h)$ the quantities $d_i$ and $a_i=d_i h^{-1/2}$ constructed above that correspond to the section $S_h(u)$. Notice that $a_i(h)$ represent the lengths of the axes of a $k$-dimensional ellipsoid (in the $y$ variable) which is equivalent to $\mathcal S_h'(u) \cap \{z=0 \}$. We show that for any $h$ we have
$$
\max a_i(h) \ge \bar C \quad \quad \Rightarrow \quad \quad \max a_i(th) \le \frac 12 \max a_i(h),$$
for some $t \in [\bar c, 1]$, and with $\bar C$ universal. Since $\max a_i(1)$ is bounded above by a universal constant we easily obtain that $\max a_i(h) $ remains bounded above.

Let $C_0$ and $c_k$ denote the constants in the property above for $\tilde \mu$ and $M=4 \sqrt n$, hence $C_0$, $c_k$ are universal. Assume that $a_l(h)$ is the first $a_i(h)$ greater than $C_0$ i.e.
$$a_1 \le \cdots \le a_{l-1} \le C_0 \le a_l  \le \cdots \le a_{k}.$$
Since $u_h \in \mathcal D_0^{\tilde \mu} (a_1,..,a_k, \infty,..,\infty)$, we have (see \eqref{10})
$$ \mathcal S_t'(u_h) \subset \{|(x_1,..,x_{l-1})| \le C_1\} \times \{|(x_l,..,x_{n-1})| \le \frac 1 M\},$$
for some $C_1(c_k)$ universal.
Since $$\{ y| (y,0) \in \mathcal S'_{th}(u) \}\,  = \,diag(a_1,..,a_k)  \, \, \, \{y|  (y,0) \in \mathcal S_t'(u_h)\},$$ we obtain
$$\mathcal S'_{th}(u) \cap \{z=0 \} \subset \prod_{i=1}^{l-1}\{|x_i|\le C_1 a_i\} \times \prod_{i=l}^{k}\{|x_i|\le \frac{a_i}{M}\} \times \{z=0\}.$$
For $i \le l-1$ we have
$$C_1 a_i \le C_1 C_0 := \frac {\bar C} { M} \le \frac{\max a_i} {M},$$
 and we find
$$\mathcal S'_{th}(u) \cap \{z=0\} \subset \{ |y| \le \frac 14 \max a_i(h) \} \times \{z=0\},$$
which gives $$\max a_i(th) \le \frac 12  \max a_i(h).$$
\end{proof}

\section{Pogorelov type estimates}\label{s5}

In this section we obtain two estimates of Pogorelov type that will be used in Section \ref{s6} for the proof of Theorem \ref{T3}. They appeared also in \cite{S2} where the obstacle problem for Monge-Ampere equation was investigated. 

\begin{thm}\label{Po}
Assume $u\in C^4(\Omega) \cap C(\ov \Om)$ is convex, $u=0$ on $\p \Om$,
$$\det D^2 u=f(x_2,\ldots,x_n) \quad \mbox{in $\Omega$}, \quad \quad f>0.$$
Then
$$u_{11}|u| \le C(n,\max_{\Omega}|u_1|).$$
\end{thm}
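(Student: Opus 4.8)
The plan is to use the classical Pogorelov pointwise technique, adapted to exploit the special structure that $f$ does not depend on $x_1$. Consider the auxiliary function
$$
w := \log u_{11} + \log(-u) + \tfrac{1}{2} u_1^2
$$
(recall $u<0$ in $\Omega$ since $u=0$ on $\partial\Omega$ and $u$ is convex; if the sign convention is $u\ge 0$ as in the rest of the paper one takes $\log(C-u)$ or works with $-u$, but the mechanism is the same). Since $w\to -\infty$ on $\partial\Omega$ (because $u\to 0$ there while $u_{11}$ stays bounded by interior estimates, or rather because $\log(-u)\to-\infty$ dominates), $w$ attains an interior maximum at some point $x_0\in\Omega$. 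The goal is to bound $w(x_0)$ from above by a constant depending only on $n$ and $\max_\Omega|u_1|$, which then gives the stated estimate everywhere. After a linear change of coordinates in the $x_2,\dots,x_n$ variables (which leaves $u_{11}$ and the $x_1$-independence of $f$ untouched) we may assume $D^2u(x_0)$ is diagonal.

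The key steps, carried out at $x_0$, are the following. First, differentiate the equation $\log\det D^2 u = \log f(x_2,\dots,x_n)$ once and twice in the $x_1$ direction; since $f$ is independent of $x_1$, both $\partial_1$ and $\partial_{11}$ of the right-hand side vanish, giving
$$
u^{ij}u_{ij1} = 0, \qquad u^{ij}u_{ij11} - u^{ip}u^{jq}u_{pq1}u_{ij1} = 0,
$$
where $u^{ij}$ denotes the inverse matrix and we linearize $L := u^{ij}\partial_{ij}$. Second, compute $Lw$ using these identities together with $Lu = n$ and $L u_1 = 0$ (the latter again from $x_1$-independence of $f$). At the maximum point $\nabla w = 0$, which expresses $u_{11i}/u_{11}$ in terms of $u_i/u$ and $u_1 u_{1i}$; substitute this into the (nonpositive) quantity $Lw(x_0)$. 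The terms coming from $\log u_{11}$ produce the good term $u^{ii}(u_{11i})^2/u_{11}^2$ with a favorable sign after using the second differentiated equation, the terms from $\log(-u)$ produce $-n/(-u) - u^{ii}u_i^2/u^2$, and the term from $\tfrac12 u_1^2$ produces $u^{11}u_{11} = 1$ (using diagonality: $u^{11} = 1/u_{11}$). Collecting everything and using the constraint from $\nabla w = 0$, one arrives at an inequality of the form
$$
0 \ge Lw(x_0) \ge \frac{u_{11}u^{11}}{(-u)} \cdot(\text{something}) - C(n)\Big(1 + \frac{(\max|u_1|)^2}{?}\Big),
$$
more precisely a bound showing $u_{11}(-u)$ at $x_0$ (hence $e^{w(x_0)}$ up to the bounded factor $e^{\frac12 u_1^2}$) cannot be too large.

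The main obstacle I expect is the careful bookkeeping of the third-order terms: one must show that the "bad" third-order terms $u^{ii}(u_{11i})^2$ appearing with the wrong sign from differentiating $\log u_{11}$ are exactly absorbed or dominated by the corresponding term in the twice-differentiated equation, $u^{ip}u^{jq}u_{pq1}u_{ij1} \ge u^{11}u^{ii}(u_{11i})^2$ (a Cauchy–Schwarz / diagonalization step), so that no gradient-of-Hessian term survives uncontrolled. This is the standard delicate point in Pogorelov estimates, but here it is cleaner than in the general case precisely because $Lu_1 = 0$: there is no inhomogeneous third-order term forcing us to also control $u_1$ by other means, and the only place $\max|u_1|$ enters is through the harmless bounded multiplier $e^{\frac12 u_1^2}$ and through controlling the cross term $u^{ii}u_i\,u_1 u_{1i}$ arising from the mixed product in $Lw$. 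Once the third-order terms are handled, the rest is an elementary inequality in the positive quantities $u^{ii}$, $u_{ii}$, and $(-u)$.
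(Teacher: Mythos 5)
Your proposal is essentially the paper's proof: same test function $\log u_{11}+\log|u|+\tfrac12 u_1^2$, same linearized identities from differentiating $\log\det D^2u=\log f$ once and twice in $x_1$ (both right-hand sides vanish because $f$ is $x_1$-independent), and the same collection of terms at the interior maximum, leading to the bound $e^{2M}-n e^{u_1^2/2}e^M\le u_1^2 e^{u_1^2}$. One slip you should fix: you cannot diagonalize $D^2u(x_0)$ by a linear change in $(x_2,\dots,x_n)$ alone, since such a change leaves the cross-terms $u_{1j}(x_0)$ untouched; the paper instead uses a \emph{sliding along $x_1$}, namely $\tilde u(x)=u(x_1-\alpha_2 x_2-\cdots-\alpha_n x_n,x_2,\dots,x_n)$ with $\alpha_j=u_{1j}(x_0)/u_{11}(x_0)$ (together, implicitly, with an orthogonal change in the remaining $x'$-block), which annihilates $u_{1j}(x_0)$ for $j\ge 2$ while preserving $u$, $u_1$, $u_{11}$ and the $x_1$-independence of $f$.
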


{\it Remark:} The constant $C(n,\max_{\Omega}|u_1|)$ does not depend on $f$ or $\Omega$.

\begin{proof} We may assume that $u \in C^4(\ov \Om)$ since we apply the estimate to $u+\eps$ and then let $\eps \to 0$. We write
$$ \log \det D^2 u= \log f$$
and differentiate with respect to $x_1$
\begin{equation}{\label{48}}
u^{ij}u_{1ij}=0,
\end{equation}
where $[u^{ij}]=[D^2u]^{-1}$ and we use the index summation convention.
Differentiating once more, we have
\begin{equation}{\label{49}}
u^{ij}u_{11ij}-u^{ik}u^{jl}u_{1ij}u_{1kl}=0.
\end{equation}

Suppose the maximum of
\begin{equation}{\label{410}}
\log u_{11} + \log|u| + \frac{1}{2}|u_1|^2=M
\end{equation}
occurs at the origin. One can also assume that $D^2u(0)$ is diagonal since
the the domain transformation (sliding along $x_1$ variable)
\begin{equation}{\label{412.5}}
\tilde{u}(x_1,..,x_n):=u(x_1-\alpha_2 x_2-..-\alpha_n x_n, x_2, ..,x_n), \quad
\alpha_i=\frac{u_{1i}(0)}{u_{11}(0)}
\end{equation}
does not affect the equation or the maximum
in (\ref{410}). Thus, at $0$

\begin{equation}{\label{411}}
\frac{u_{11i}}{u_{11}}+\frac{u_i}{u}+u_1u_{1i}=0
\end{equation}

\begin{equation}{\label{412}}
\frac{u_{11ii}}{u_{11}}-\frac{u_{11i}^2}{u_{11}^2}+\frac{u_{ii}}{u}
-\frac{u_i^2}{u^2}+u_{1i}^2+u_1u_{1ii}\le 0
\end{equation}

We multiply (\ref{412}) by $u_{ii}^{-1}$ and add

$$\frac{u_{11ii}}{u_{11}u_{ii}}-\frac{u_{11i}^2}{u_{ii}u_{11}^2}+\frac{n}{u}
-\frac{u_i^2}{u_{ii}u^2}+\frac{u_1u_{1ii}}{u_{ii}}+u_{11}\le 0.$$

From (\ref{411}) we obtain

$$ \frac{u_i}{u}=-\frac{u_{11i}}{u_{11}}, \quad i \ne 1,$$
which together with (\ref{48}), (\ref{49}) gives

$$\sum_{i,j\ne
1}\frac{u_{1ij}^2}{u_{11}u_{ii}u_{jj}}+\frac{n}{u}
-\frac{u_1^2}{u_{11}u^2}+u_{11}\le 0,$$
thus,
$$e^{2M}-ne^{\frac{1}{2}u_1^2}e^M \le u_1^2e^{u_1^2}$$
and the result follows.

\end{proof}

The second estimate deals with curvature bounds for the level sets of solutions to certain Monge-Ampere equations.

We assume the convex function $u\in C^4(\Omega) \cap C(\ov \Om)$ is increasing in the $e_n$ direction and
\begin{equation}\label{4.13}
u=\sigma x_n \quad \mbox{on $\p \Om$,}
\end{equation}
for some $\sigma>0$. We denote by
$v(x_1,..,x_{n-1},s)$
the graph in the $-e_n$ direction of the $s$ level set, i.e
$$u(x_1,..,x_{n-1},-v(x_1,..,x_{n-1},s))=s.$$
Clearly $v$ is convex.

\begin{thm}{\label{abo}}
Assume $u$ satisfies \eqref{4.13} and
$$ u_n^\alpha \, \, \det D^2 u =f(x_2,..,x_{n-1},u)  \quad \mbox{in $\Om$,}$$
for some $\alpha \ge 0$.
Then
$$v_{11} \, \, |u-\sigma x_n| \le C \left ( n,\alpha,\sigma,\max_\Om u_n, \max_\Om |v_1| \right).$$

\end{thm}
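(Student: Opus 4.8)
The plan is to turn the statement into a Pogorelov‑type estimate for the convex function $v$ itself and then run a maximum–principle argument on an auxiliary function, exactly in the spirit of Theorem \ref{Po} and of \cite{S2}. \emph{Step 1 (the equation for $v$).} Differentiating the defining relation $u(x_1,\dots,x_{n-1},-v(x_1,\dots,x_{n-1},s))=s$ gives $v_{x_i}=u_i/u_n$ for $i<n$ and $v_s=-1/u_n$; inserting these into the formula for $D^2v$ one finds $\det D^2v=(\det D^2u)\,u_n^{-(n+2)}$ (equivalently, the graph of $v$ is a fixed linear image of the graph of $u$). Combined with $u_n^\alpha\det D^2u=f(x_2,\dots,x_{n-1},u)$ and $u_n=(-v_s)^{-1}$, this yields in the interior of the convex domain $D$ of $v$
$$\det D^2v=f(x_2,\dots,x_{n-1},s)\,(-v_s)^{\,n+2+\alpha}.$$
Two features are decisive: the right hand side is independent of $x_1$ (as in Theorem \ref{Po}), and $-v_s=1/u_n\ge(\max_\Om u_n)^{-1}>0$. \emph{Step 2 (reduction).} Since $u=\sigma x_n$ on $\p\Om$, the boundary of $D$ is $\{v=-s/\sigma\}$, so $\tilde v:=v+s/\sigma$ is convex, vanishes on $\p D$ (hence $\tilde v\le 0$ inside), satisfies $\tilde v_{11}=v_{11}$, $\tilde v_1=v_1$, $|u-\sigma x_n|=\sigma|\tilde v|$, and solves $\det D^2\tilde v=f(x_2,\dots,x_{n-1},s)\,(\sigma^{-1}-\tilde v_s)^{\,n+2+\alpha}$ with $\sigma^{-1}-\tilde v_s=1/u_n\ge(\max_\Om u_n)^{-1}$. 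It suffices to bound $\tilde v_{11}|\tilde v|$.

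\emph{Step 3 (maximum principle).} As in Theorem \ref{Po} we may assume $\tilde v\in C^4(\ov D)$ (apply the estimate to $\tilde v+\eps$ and let $\eps\to0$). Consider $W=\log\tilde v_{11}+\log(-\tilde v)+\tfrac12\tilde v_1^2$, which tends to $-\infty$ on $\p D$ and hence attains its maximum at an interior point $p$, where we diagonalize $D^2\tilde v(p)$ by a rotation of $x_2,\dots,x_n$ together with a sliding in $x_1$. Differentiating $\log\det D^2\tilde v=\log f+(n+2+\alpha)\log(\sigma^{-1}-\tilde v_s)$ once and twice in $x_1$: the $f$–terms drop because $f$ is independent of $x_1$, and the only new terms, coming from $(n+2+\alpha)\log(\sigma^{-1}-\tilde v_s)$, are a genuine third–order term $\propto\tilde v_{11s}$ and a second–order term $\propto\tilde v_{1s}^2$. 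Using the first–order relations $\nabla W(p)=0$ exactly as in the proof of Theorem \ref{Po}, the third–order term $\tilde v_{11s}$ is traded for $-\tilde v_{11}\tilde v_s/\tilde v$ plus a cross term that cancels against $\mathcal L(\tfrac12\tilde v_1^2)$ (here $\mathcal L=\tilde v^{ij}\p_{ij}$); this converts the offending contribution into $-(n+2+\alpha)/\tilde v$, which together with the $n/\tilde v$ from $\mathcal L\log(-\tilde v)$ leaves $-(2+\alpha)/\tilde v$ — still of the sign favorable for the argument, since $\tilde v<0$ — while every surviving coefficient $(\sigma^{-1}-\tilde v_s)^{-1}=u_n$ is bounded by $\max_\Om u_n$. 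Carrying out the algebra of Theorem \ref{Po} and multiplying through by $\tilde v_{11}\tilde v^2>0$ gives, with $X:=\tilde v_{11}|\tilde v|(p)$,
$$X^2\le C_1X+C_2+C_3\,\tilde v_{1s}^2\,\tilde v^2\big|_p,\qquad C_i=C_i\big(n,\alpha,\sigma,\max_\Om u_n,\max_D|v_1|\big).$$

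\emph{Step 4 (absorbing the extra term).} The term $C_3\,\tilde v_{1s}^2\tilde v^2$ is the genuine novelty compared with Theorem \ref{Po}: it appears only because the right hand side depends on $\nabla\tilde v$, and the distinguished direction $s$ entering that dependence is not preserved by the diagonalizing change of variables. Since $\tilde v_{1s}^2\le\tilde v_{11}\tilde v_{ss}$, one has $\tilde v_{1s}^2\tilde v^2\le X\,(\tilde v_{ss}|\tilde v|)$, so the estimate closes once $\tilde v_{ss}|\tilde v|$ is controlled at $p$ — equivalently $u_{nn}\,|u-\sigma x_n|$ — which follows by running the same maximum–principle argument in the $s$–direction, the point being that the weight $|\tilde v|$ vanishes on $\p D$ at exactly the rate at which $\tilde v_{ss}=u_{nn}/u_n^3$ may blow up (this is again where the lower bound on $-v_s=1/u_n$ is used). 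With this in hand Step 3 gives $X\le C$, hence $W\le C$ on $D$, hence $\tilde v_{11}|\tilde v|\le C$ everywhere, and finally $v_{11}|u-\sigma x_n|=\sigma\,\tilde v_{11}|\tilde v|\le C(n,\alpha,\sigma,\max_\Om u_n,\max_\Om|v_1|)$.

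I expect the main obstacle to be precisely Step 4: because the second differentiation of the equation in $x_1$ produces a mixed second derivative $\tilde v_{1s}$ that is absent in the setting of Theorem \ref{Po}, the estimates for $\tilde v_{11}|\tilde v|$ and for $\tilde v_{ss}|\tilde v|$ are coupled and must be made to close simultaneously, and one has to keep careful track of the interplay between the direction $s$ (which enters the right hand side but not the diagonalizing frame) and the boundary weight $|\tilde v|$. The role of the a priori bound $\sigma^{-1}-\tilde v_s=1/u_n\ge(\max_\Om u_n)^{-1}$ is exactly to keep all the extra coefficients that this analysis generates bounded in terms of the stated parameters.
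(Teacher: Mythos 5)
Your Steps 1–2 are exactly the paper's reduction: compute the equation for $v$ via the Gauss curvature of the graph, pass to $w=v+x_n/\sigma$ which vanishes on the boundary, and aim for a Pogorelov estimate. From Step 3 on, however, there are two genuine gaps, and the second one is fatal as written.

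The minor issue first: you take the auxiliary function $W=\log\tilde v_{11}+\log(-\tilde v)+\tfrac12\tilde v_1^2$. In Theorem \ref{Po} the coefficient $\tfrac12$ is fine because the equation there gives $\sum_i u^{ii}u_{1ii}=0$, so the term $u_1\sum u^{ii}u_{1ii}$ produced by the gradient part of $W$ drops identically. In your setting that sum equals $\gamma\,w_{1\xi}/(w_\xi+\beta)$ with $\gamma=n+2+\alpha\ne 0$, so the same cancellation does \emph{not} occur, and the final inequality, after (4.24)–type manipulations, produces a factor $1-2\eta w_1^2$ multiplying $(ww_{11})^2$. With $\eta=1$ this factor can be negative and the argument does not close; one needs to replace $\tfrac12\tilde v_1^2$ by $\tfrac{\eta}{2}\tilde v_1^2$ with $\eta$ small in terms of $\max|v_1|$, exactly as in the paper's Lemma \ref{above}.

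The more serious problem is Step 4. You identify the extra second–order term $\propto\tilde v_{1s}^2$ as the ``genuine novelty,'' propose to bound it by $\tilde v_{1s}^2\le\tilde v_{11}\tilde v_{ss}$, and then declare the estimate closes \emph{provided} one also controls $\tilde v_{ss}|\tilde v|$ by ``running the same argument in the $s$–direction.'' That auxiliary estimate is not proved, and you yourself flag it as the main obstacle; the two inequalities you are trying to bootstrap are genuinely coupled and nothing in your outline decouples them. The paper avoids this entirely. After diagonalizing at the maximum point, it writes, using Cauchy–Schwarz in the diagonal $i=j$ terms and then the once–differentiated equation $\sum_i w^{ii}w_{1ii}=\gamma\,w_{1\xi}/(w_\xi+\beta)$,
$$\sum_{i,j\ne 1}\frac{w_{ij1}^2}{w_{ii}w_{jj}}-\gamma\,\frac{w_{1\xi}^2}{(w_\xi+\beta)^2}\;\ge\;-\frac{w_{111}^2}{w_{11}^2}+\Big(\frac{\gamma^2}{n}-\gamma\Big)\frac{w_{1\xi}^2}{(w_\xi+\beta)^2}\;\ge\;-\frac{w_{111}^2}{w_{11}^2},$$
the last step using $\gamma=n+2+\alpha\ge n$. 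In other words, the $w_{1\xi}^2$ term is not an obstruction at all: it is absorbed \emph{for free} by the Cauchy–Schwarz bound and the sign of $\gamma^2/n-\gamma$. That is the single key observation you are missing, and it is precisely what makes the one–pass maximum–principle argument close without any auxiliary bound on $\tilde v_{ss}|\tilde v|$. After this, the remaining algebra is as you describe; the coefficient $\beta/(w_\xi+\beta)$ (i.e.\ $\sigma^{-1}u_n$) is what makes the final constant depend on $\sigma$ and $\max u_n$.

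So: your framework is the right one and Steps 1–2 match the paper, but your proposed Step 4 is the wrong way to handle the new second–order term, and without the $\gamma\ge n$/Cauchy–Schwarz cancellation your proof does not close.
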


{\it Remark:} The constant $C$ does not depend on $f$ or $\Om$. We also have the equality $$|u-\sigma x_n|= |\sigma v+ s|.$$

First we write the equation for $v$.
The normal map to the graph of $u$ at
$$X=(x_1,..,x_n,x_{n+1})=(x_1,..,x_n,u(x))$$ is given by
$$\nu=(\nu_1,..,\nu_{n+1})=(1+|\nabla
u|^2)^{-\frac{1}{2}}(-u_1,..,-u_n,1).$$
The Gauss curvature of the graph of $u$ at $X$ equals
\begin{align*}
K(X) &=\det D_i \left ( u_j (1+|\nabla u|^2)^{-\frac{1}{2}} \right)\\
&=(1+|\nabla u|^2)^{-\frac{n+2}{2}} \det D^2 u \\
&= (\nu_{n+1})^{n+2} \det D^2 u.
\end{align*}

The graph of $u$ can be viewed as the graph of $v$ in the $-e_n$ direction,
thus
$$K(X)= (-\nu_n)^{n+2} \det D^2 v,$$
which gives
$$ \det D^2 u= \det D^2 v \left( \frac{-\nu_n}{\nu_{n+1}}  \right)^{n+2}.$$
Since
$$u_n=\frac{-\nu_n}{\nu_{n+1}}=-\frac 1 {v_s}=\frac{1}{|v_s|},$$
we find
$$u_n^\alpha \, \det D^2 u= |v_s|^{-(n+2+\alpha)} \, \det D^2 v.$$

By abuse of notation we relabel the $s=x_{n+1}$ variable (i.e. the last coordinate of $v$) by $x_n$ we find that $v$ satisfies
$$\det D^2 v=f(x_2,x_3,..,x_{n-1},x_n) |v_n|^{n+2+\alpha}, \quad \quad \quad v_n<0,$$
and it is defined in
$$\Omega_v:=\left \{v < -\frac {x_n} {\sigma} \right \}, \quad \quad  v=-\frac {x_n} {\sigma} \quad \mbox{on $\p \Om_v$.}$$
We denote by
$$w:=v + \frac {x_n} {\sigma},$$
thus $w=0$ on $\p \Omega_v$ and in $\Om_v$
$$ \det D^2 w= f(x_2,..,x_n) \left (\frac 1 \sigma - w_n \right )^{n+2+\alpha},$$
and also
$$\left (\frac 1 \sigma -w_n \right )^{-1}=u_n>0, \quad \quad w_1=v_1, \quad w_{11}=v_{11}.$$
In order to prove Theorem \ref{abo} it suffices to prove the next
estimate.

\begin{lem}{\label{above}}
Suppose that in the bounded set $\{w<0 \}$
$$ \det D^2 w= f(x_2,..,x_n)(w_\xi + \beta)^{n+2+\alpha}, \quad w_\xi + \beta>0,$$
where $\xi$ is some vector. Then
$$w_{11}|w| \le C \left (n,\alpha,\max_{\{w<0 \}} |w_1|, \max_{\{w<0 \}}
\frac{\beta}{w_\xi +
\beta} \right ).$$
\end{lem}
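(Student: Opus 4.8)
The plan is to prove Lemma \ref{above} by the classical Pogorelov barrier-maximum argument, mimicking the structure of the proof of Theorem \ref{Po} but carrying along the extra gradient-type factor $(w_\xi+\beta)$ coming from the Gauss-curvature change of variables. First I would reduce to $w \in C^4(\overline{\{w<0\}})$ by replacing $w$ with $w+\epsilon$ and letting $\epsilon\to 0$, noting that the right-hand side $f(x_2,\dots,x_n)(w_\xi+\beta)^{n+2+\alpha}$ depends on $x_1$ only through $w_\xi$, which is the key feature that makes differentiation in $x_1$ tractable. Then I would consider the test function
$$
\Phi := \log w_{11} + \log|w| + \tfrac12 |w_1|^2 + \lambda w_\xi
$$
(or possibly $\lambda x_1$ in place of the last term — one chooses whichever linear/gradient correction absorbs the nuisance terms), assume its maximum over $\{w<0\}$ is attained at an interior point, which we normalize to the origin after a sliding along $x_1$ that diagonalizes $D^2w(0)$ and does not change the equation, the set, or $\Phi$.

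The main computation is to write $\log\det D^2 w = \log f + (n+2+\alpha)\log(w_\xi+\beta)$ and differentiate twice in $x_1$. The first derivative gives $w^{ij}w_{1ij} = (n+2+\alpha)\,\dfrac{w_{\xi 1}}{w_\xi+\beta}$, and differentiating again produces $w^{ij}w_{11ij} - w^{ik}w^{jl}w_{1ij}w_{1kl}$ on the left, equated to a second-derivative expression in $\log(w_\xi+\beta)$ on the right. At the maximum point the first-order conditions $\partial_i\Phi=0$ express $w_i/w$ and the ratios $w_{11i}/w_{11}$ in terms of lower-order quantities, and the inequality $\sum_i u_{ii}^{-1}\partial_{ii}\Phi \le 0$ (equivalently contracting the second-order condition with $[w^{ij}]$, diagonal at $0$) is combined with the twice-differentiated equation. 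As in Theorem \ref{Po}, the good terms are $n/w$ (negative, since $w<0$) and $w_{11}$; the nonnegative Hessian-square terms $\sum w_{1ij}^2/(w_{11}w_{ii}w_{jj})$ dominate the cross terms coming from the first-order relations, and everything else is controlled by $\max|w_1|$, $\alpha$, $n$, and $\max \beta/(w_\xi+\beta)$. Collecting, one gets an inequality of the shape $e^{2M} \le C_1 e^{M} + C_2$ with $M = \max\Phi$ and $C_1,C_2$ depending only on the stated quantities, whence $M$ is bounded and so is $w_{11}|w|$ (after reabsorbing the $\tfrac12|w_1|^2$ and $\lambda w_\xi$ terms, which are themselves bounded).

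The step I expect to be the genuine obstacle is handling the extra factor $(w_\xi+\beta)^{n+2+\alpha}$: its logarithm contributes $(n+2+\alpha)\big[\dfrac{w_{\xi 11}}{w_\xi+\beta} - \dfrac{w_{\xi 1}^2}{(w_\xi+\beta)^2}\big]$ after two $x_1$-derivatives, and the term $w_{\xi 11}/(w_\xi+\beta)$ is a \emph{third-order} quantity that is not obviously controlled. The device that saves the argument is precisely the linear/gradient correction in $\Phi$: choosing the coefficient $\lambda$ appropriately, the first-order condition for $\Phi$ lets one solve for the offending third derivatives (the $w_{\xi 11}$-type term and the $w_{11i}$ terms together) and feed them back so that only squares survive, exactly as the $\tfrac12|w_1|^2$ term does in Theorem \ref{Po}. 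One must also check that the $x_1$-sliding normalization at $0$ is compatible with the presence of the vector $\xi$ — this is fine because sliding along $x_1$ only shears the $x_1$-coordinate and $w_\xi$ transforms by a harmless additive constant in its definition, leaving $w_{11}$, $w_1$, and the equation's structure intact. Once the third-order terms are eliminated, the remaining estimate is routine and parallels the end of the proof of Theorem \ref{Po} verbatim.
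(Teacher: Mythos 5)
Your structural plan is on the right track (Pogorelov test function, sliding to diagonalize, contract the second-order condition with $w^{ij}$, combine with the twice-differentiated equation), but two of your key conjectures about how to close the estimate are off, and you miss the step that is actually delicate.

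First, the correction term $\lambda w_\xi$ (or $\lambda x_1$) is unnecessary. The paper uses exactly
$$\Phi=\log w_{11}+\log|w|+\tfrac{\eta}{2}w_1^2$$
with no extra term, and the ``offending'' third-order quantity $w_{11\xi}/(w_\xi+\beta)$ is eliminated precisely by the $\xi$-component of the first-order condition $\nabla\Phi=0$ for this $\Phi$: at the maximum,
$$\frac{w_{11\xi}}{w_{11}}+\eta w_1 w_{1\xi}=-\frac{w_\xi}{w},$$
and the left-hand side is exactly the combination that appears (after dividing by $w_\xi+\beta$) in the contracted second-order inequality. No choice of $\lambda$ is needed; the mechanism you correctly anticipate (``solve for the third derivatives from the first-order conditions'') already fires for $\lambda=0$. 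Adding $\lambda w_\xi$ would only introduce new $w_{\xi i}$ and $w_{\xi ij}$ terms that you would then have to control.

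Second, the genuinely structural point you do not identify is where the specific exponent $\gamma:=n+2+\alpha$ is used. Differentiating $\log(w_\xi+\beta)$ twice in $x_1$ produces a negative term $-\gamma\,w_{1\xi}^2/(w_\xi+\beta)^2$. This is absorbed by the Hessian-square term via Cauchy--Schwarz: from the once-differentiated equation $\sum_i w_{1ii}/w_{ii}=\gamma\,w_{1\xi}/(w_\xi+\beta)$ one gets
$$\sum_i\frac{w_{1ii}^2}{w_{ii}^2}\ \ge\ \frac1n\Big(\sum_i\frac{w_{1ii}}{w_{ii}}\Big)^2=\frac{\gamma^2}{n}\,\frac{w_{1\xi}^2}{(w_\xi+\beta)^2},$$
and the difference $\gamma^2/n-\gamma\ge0$ holds if and only if $\gamma\ge n$, i.e. $\alpha\ge -2$. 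So the exponent $n+2+\alpha$ from the Gauss-curvature change of variables is not a nuisance constant; it is exactly what makes the Cauchy--Schwarz absorption work. Any sketch that ``drops the positive Hessian-square term'' as in Theorem~\ref{Po} will not close here.

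Third, the coefficient on the $|w_1|^2$ term must be a small $\eta$, not $\tfrac12$. Unlike Theorem~\ref{Po}, where the $w_{111}^2$ contributions cancel exactly in the algebra, here one needs to estimate $w_{111}/w_{11}$ via the $i=1$ first-order condition, producing a term of order $\eta^2 w_1^2 w_{11}^2$ that competes in the final balance with the good term $\eta\,w_{11}$. Multiplying out, the coefficient in front of $(w w_{11})^2$ becomes $\eta(1-2\eta w_1^2)$, which must be kept positive; the paper therefore chooses $\eta$ with $\eta\max w_1^2<1/4$. With your coefficient $\tfrac12$ this sign can fail and the argument collapses.

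Your remark about the $x_1$-sliding is essentially fine in spirit (the sliding only modifies $\xi$ to a new vector and preserves $w_1$, $w_{11}$ and the form of the equation), but it is more accurate to say that $\xi$ transforms to a new $\tilde\xi$ with $\tilde w_{\tilde\xi}=w_\xi$ rather than that $w_\xi$ picks up an additive constant.
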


\begin{proof}
Assume the maximum of
\begin{equation}{\label{415}}
\log w_{11} + \log |w| + \frac{\eta}{2}w_1^2
\end{equation}
occurs at the origin, where $\eta>0$ is a small constant depending only on
$\max|w_1|$, to be made precise later.

Again we can assume that $D^2w(0)$ is diagonal. Indeed, using
a sliding in the $x_1$ direction as in (\ref{412.5}) we find that the transformed function $\tilde w$ satisfies
$$\det D^2 \tilde{w}=f(x_2,..,x_n) (\tilde{w}_{\tilde{\xi}} + \beta)^{n+2+\alpha}, \quad
\tilde{w}_{\tilde{\xi}}(\tilde x)=w_\xi(x),$$
thus, the hypothesis and the conclusion remain invariant under this
transformation.

We write
$$\log \det D^2 w= \log f (x_2,..,x_n)+ \gamma \log (w_\xi +\beta),$$
with $$\gamma:=n+2+\alpha.$$
Taking derivatives in the $e_1$ direction we find

\begin{equation}{\label{416}}
\frac{w_{1ii}}{w_{ii}}=\gamma \, \frac{w_{1\xi}}{w_\xi+\beta}
\end{equation}

\begin{equation}{\label{417}}
\frac{w_{11ii}}{w_{ii}}-\frac{w_{ij1}^2}{w_{ii}w_{jj}}=
\gamma \, \frac{w_{11\xi}}{w_\xi+\beta}- \gamma\, \frac{w_{1\xi}^2}{(w_\xi+\beta)^2}
\end{equation}

On the other hand, from (\ref{415}) we obtain at $0$

\begin{equation}{\label{418}}
\frac{w_{11i}}{w_{11}}+\frac{w_i}{w}+\eta w_1w_{1i}=0,
\end{equation}

\begin{equation}{\label{419}}
\frac{w_{11ii}}{w_{11}}-\frac{w_{11i}^2}{w_{11}^2}+\frac{w_{ii}}{w}
-\frac{w_i^2}{w^2}+\eta w_1w_{1ii} +\eta w_{1i}^2 \le 0.
\end{equation}

We multiply (\ref{419}) by $w_{ii}^{-1}$ and add, then use (\ref{417}),
(\ref{416})

\begin{equation}{\label{420}}
\frac{1}{w_{11}}\left( \frac{w_{ij1}^2}{w_{ii}w_{jj}}
+ \gamma \frac{w_{11\xi}}{w_\xi+\beta}-\gamma \frac{w_{1\xi}^2}{(w_\xi+\beta)^2}
\right)-
\end{equation}

$$-\frac{w_{11i}^2}{w_{ii}w_{11}^2}+\frac{n}{w}
-\frac{w_i^2}{w_{ii}w^2}+\gamma \eta w_1\frac{w_{1\xi}}{w_\xi+\beta} +\eta
w_{11} \le 0.$$

Since $\gamma \ge n$ we have

\begin{equation}\label{421}
\sum_{i,j \ne
1}\frac{w_{ij1}^2}{w_{ii}w_{jj}}- \gamma \frac{w_{1\xi}^2}{(w_\xi+\beta)^2} \ge \\
\end{equation}

$$-\frac{w_{111}^2}{w_{11}^2}+\frac{1}{n}\left( \sum_1^n
\frac{w_{1ii}}{w_{ii}}
\right)^2-\gamma \frac{w_{1\xi}^2}{(w_\xi+\beta)^2}\ge$$

$$ -\frac{w_{111}^2}{w_{11}^2}+
\frac{\gamma^2}{n}\frac{w_{1\xi}^2}{(w_\xi+\beta)^2}
-\gamma \frac{w_{1\xi}^2}{(w_\xi+\beta)^2} \ge$$

$$\ge -\frac{w_{111}^2}{w_{11}^2}.$$

From (\ref{418})
$$\frac{w_i}{w}=-\frac{w_{11i}}{w_{11}}, \quad \mbox{for $i\ne 1$}$$
which together with (\ref{421}) gives us in (\ref{420})

\begin{equation}{\label{422}}
\frac{\gamma}{w_\xi+\beta}\left(\frac{w_{11\xi}}{w_{11}}+
\eta w_1w_{1\xi} \right) - \frac{w_{111}^2}{w_{11}^3}+
 \frac{n}{w}-\frac{w_1^2}{w_{11}w^2}+\eta w_{11} \le 0.\end{equation}

From (\ref{418})

\begin{equation}{\label{423}}
\frac{w_{11\xi}}{w_{11}}+ \eta w_1w_{1\xi}=-\frac{w_\xi}{w}
\end{equation}
and also
$$\frac{w_{111}}{w_{11}}=-\frac{w_1}{w} - \eta
w_1w_{11}$$
thus,
\begin{equation}{\label{424}}
\frac{w_{111}^2}{w_{11}^2} \le 2 \frac{w_1^2}{w^2} +  2 \eta^2
w_1^2w_{11}^2.
\end{equation}

We use \eqref{423}, \eqref{424} in \eqref{422} and obtain

$$-\frac{\gamma w_\xi}{(w_\xi +\beta)w}+ \frac{n}{w} -
3\frac{w_1^2}{w_{11}w^2}+\eta(1- 2\eta w_1^2) w_{11} \le 0.$$

Multiplying by $w_{11}w^2$ we have

$$\eta(1- 2 \eta w_1^2) (ww_{11})^2+\left( n-\gamma + \gamma \frac{\beta}{w_\xi +\beta}
\right) ww_{11} \le 3 w_1^2$$
and the result follows if $\eta$ is chosen such that $\eta \, (\max w_1^2) <1/4$.

\end{proof}

\section{Proof of Theorem \ref{T3}}\label{s6}

We prove Theorem \ref{T3} by induction on $k$. The cases $k=0$ and the induction step $k-1\Rightarrow k$ are quite similar.
We start with $k=0$.
\begin{prop}\label{p4}
Theorem \ref{T3} holds for $k=0$. Precisely if $$u \in \mathcal D_0^\mu(\infty,..,\infty),$$
then
$$\mathcal S_h'(u) \subset \{|x'| \le C h^\beta  \}, \quad \quad \beta:=\frac{1}{2(n+1+\alpha)},$$
for some $C$ depending on $\mu$, $n$ and $\alpha$.
\end{prop}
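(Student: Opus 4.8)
The plan is to reduce the statement to a single ``improvement'' estimate at dyadic scales and then iterate. By \eqref{12} the set $\mathcal S_h'(u)$ is equivalent to the $(n-1)$-dimensional ellipsoid with semi-axes $a_i(h)=d_i(h)h^{-1/2}$, so the assertion $\mathcal S_h'(u)\subset\{|x'|\le Ch^{\beta}\}$ is exactly the upper bound $\max_{i\le n-1}d_i(h)\le Ch^{1/2+\beta}$ on the tangential semi-axes of $S_h(u)$; since $\big(\prod_{i<n}d_i(h)^2\big)\,d_n(h)^{2+\alpha}=h^n$, this forces in particular a lower bound on the height $d_n(h)$, i.e. it says that $u$ cannot grow too slowly in the $x_n$-direction relative to the tangential directions. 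First I would normalize via Lemma \ref{l6}: after replacing $u$ by its normalization $u_h$ in $S_h(u)$ it is enough to prove that for every $v\in\mathcal D_0^{\bar\mu}(\infty,\dots,\infty)$ the tangential width of $S_{1/2}(v)$, measured against $h^{1/2}$, is at most a fixed fraction $\theta<1$ of that of $S_1(v)$. Iterating this dyadically gives $\max_i a_i(h)\le C h^{\beta}$ with $\beta=\log_2(1/\theta)$, and the value $\beta=\tfrac1{2(n+1+\alpha)}$ should fall out of the quantitative form of the gain.

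The contraction would be produced by a Pogorelov-type second-derivative bound of the kind established in Section \ref{s5}. The point is that $\det D^2v=x_n^{\alpha}$ has right-hand side independent of the tangential variables, which lets one reduce matters to an $(n-1)$-dimensional problem in those variables — e.g. by a partial Legendre transform in $x_n$, which for each value of the dual variable turns $v$ into a convex function of $x'\in\R^{n-1}$ solving a Monge--Ampère equation again of admissible form — and then run the level-set construction together with Theorem \ref{abo} and Lemma \ref{above} in that lower dimension. This is why the exponent that appears is $\gamma=(n-1)+2+\alpha=n+1+\alpha$; the resulting estimate has the shape $v_{\tau\tau}\,|v|\le C(n,\alpha,\bar\mu)$ along tangential directions $\tau$, and converting it into geometry — integrating it twice along a tangential segment through the bulk of $S_{1/2}(v)$ — controls how elongated the section can be, with the powers of $h$ matching $\beta=\tfrac1{2(n+1+\alpha)}$.

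Before the Section \ref{s5} estimates apply one must deal with the fact that $v\in\mathcal D_0^{\bar\mu}$ does not vanish on $\p S_1(v)$: it equals $1$ away from the origin and $0$ at the origin (with a conical profile on any flat part of $\p\Omega$). I would handle this by working on a subsection $S_\lambda(v)$, subtracting off the boundary contribution — which is almost affine in $x_n$ near the degeneracy set $\{x_n=0\}$, precisely because $\det D^2v\to0$ there — and absorbing the single singular boundary point with a lower barrier of the explicit type built in Lemma \ref{l1} and in the proof of Proposition \ref{p1}: the origin is lower-dimensional, so it does not charge the Monge--Ampère measure and can be dominated by such a barrier. This keeps the constants in the Pogorelov bound independent of how $\Omega$ degenerates near $0$.

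The hard part will be exactly this last combination: running the Section \ref{s5} estimates in the reduced dimension \emph{uniformly} in the singular behaviour of the boundary data and of $\p\Omega$ at the origin, while keeping the contraction factor $\theta$ strictly below $1$ (so that the dyadic iteration yields a genuine positive power) and then verifying that the exponent bookkeeping produces precisely $\beta=\tfrac1{2(n+1+\alpha)}$ rather than a weaker power. By comparison, the barrier constructions needed along the way are routine variants of those already used in Section \ref{s3}, and the compactness and boundary-value limit machinery is available from Section \ref{s4}.
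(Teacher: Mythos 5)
Your proposal identifies the right region of ideas (Pogorelov-type bounds from Section \ref{s5}, a normalization via Lemma \ref{l6}, and a role for Legendre duality), but the architecture is different from what the proof actually needs, and I see a genuine gap in the contraction strategy that would prevent it from yielding the precise exponent.

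The paper's proof does not run a dyadic-improvement argument on the sections of $u$ at all. Instead it introduces the \emph{tangent cone} $\Gamma_u$ of $u$ at the origin, restricts it to $\{x_n=1\}$ to get an $(n-1)$-dimensional convex function $\gamma_u(x')$, and proves (Lemma \ref{l8}) that $\gamma_u$ grows \emph{exactly quadratically} near its minimum: $c|x'-x_o'|^2\le\gamma_u\le C|x'-x_o'|^2$. The exponent $\beta=\tfrac1{2(n+1+\alpha)}$ then falls out not from the Pogorelov estimate itself but from the volume identity $\big(\prod_{i<n}d_i^2\big)d_n^{2+\alpha}=h^n$ combined with the inclusion \eqref{51} relating $S_t(\gamma_u)$ to $d_n^{-1}D_h'B'$. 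Your guess that $n+1+\alpha$ arises as $(n-1)+2+\alpha$ from a Pogorelov estimate run in dimension $n-1$ is a misattribution: the relevant exponent in Lemma \ref{above} is $n+2+\alpha$ (in dimension $n$), and the $n+1+\alpha$ that governs $\beta$ comes from the John-lemma volume bookkeeping, not from a lower-dimensional Monge--Amp\`ere equation.

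Two concrete difficulties with your scheme. First, your contraction statement ``the tangential width of $S_{1/2}(v)$ measured against $h^{1/2}$ is at most $\theta<1$ of that of $S_1(v)$'' would indeed iterate to $\max_ia_i(h)\le Ch^{\log_2(1/\theta)}$, but landing on the specific value $\beta=\tfrac1{2(n+1+\alpha)}$ requires the contraction factor to be \emph{exactly} $2^{-\beta}$, which is not something a single soft second-derivative bound in $\mathcal D_0^{\bar\mu}$ will hand you; you would essentially have to know the quadratic growth of the tangent cone already. Second, and more importantly, the estimate $v_{\tau\tau}|v|\le C$ you propose is an \emph{upper} bound on tangential second derivatives, which by itself cannot prevent the section from being elongated; preventing elongation needs a tangential second-derivative bound from \emph{below}. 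In the paper this lower bound appears as $D^2\gamma_u\ge cI$ and is obtained by applying Theorem \ref{abo} not to $u$ but to its full Legendre transform $u^*$ (where upper curvature bounds on level sets dualize to lower Hessian bounds) — a different object and a different mechanism from the partial Legendre transform in $x_n$ you suggest. Even for the one-sided conclusion $\mathcal S_h'(u)\subset\{|x'|\le Ch^\beta\}$, both the upper and the lower bound on $\gamma_u$ are required: the upper one controls $d_n$ through the volume identity, and the lower one converts that into the bound on $a_i=d_ih^{-1/2}$. Your proposal only sketches one half of this pair and supplies no object playing the role of $\gamma_u$ to which the Pogorelov bounds would be applied.
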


 \begin{rem} \label{r4}In view of Lemma \ref{l6} we may assume, after relabeling $\mu$, that all renormalized solutions $u_h$ given in Lemma \ref{l6} are in the same class $\mathcal D_0^\mu(\infty,..,\infty).$
\end{rem}
We prove Proposition \ref{p4} by studying the behavior of the tangent cone of $u$ at the origin.

The {\it tangent cone} $\Gamma_u$ of $u$ at the origin is obtained by taking the supremum of all supporting planes of $u$ at the origin. In other words the upper graph of $\Gamma_u$ is obtained by the intersection of all half-spaces that pass through the origin and contain the upper graph of $u$, therefore $\Gamma_u$ is lower semicontinuous.

We define the $n-1$ dimensional function $\gamma_u(x')$ as being the restriction of $\Gamma_u$ to $x_n=1$ i.e
$$\gamma_u(x'):= \Gamma_u(x',1).$$
By construction the upper graph of $\gamma_u$ is a closed set and
$$u(x) \ge \Gamma_u(x)=x_n \gamma_u(\frac{x'}{x_n}).$$
Since $\nabla u(0)=0$ we have $\gamma_0 \ge 0$ and $\inf \gamma_u=0$. In the next lemma we obtain some useful properties of $\gamma_u$.

\begin{lem}\label{l7}

a) $$\gamma_u(x') \ge c_0|x'|-C_0.$$

b) If $$\gamma_u \ge C_0 p' \cdot (x'-x_0'),$$for some unit vector $p'\in \R^{n-1}$, $|p'|=1$ and some $x_0'$ then
$$\gamma_u \ge p' \cdot (x'-x_0') + c_0.$$
The constants $c_0$, $C_0$ above are universal constants.
\end{lem}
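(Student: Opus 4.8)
The plan is to exploit that $\Gamma_u$ is the supremum of supporting planes of $u$ at $0$, together with the lower barriers already constructed for the class $\mathcal D_0^\mu$. First recall from the proof of Lemma \ref{l4} (see \eqref{13} and \eqref{31}) that every $u\in\mathcal D_0^\mu(\infty,\dots,\infty)$ satisfies $u\ge c\,x_n^{1/\delta}$, so the section $S_h(u)$ has $e_n$-width comparable to a power of $h$ and the center of mass $x_h^*$ has $x_h^*\cdot e_n\ge c\,h^{3/4}$. For part a), I would use the lower barrier $w_1$ of Lemma \ref{l1} (valid here since we only need $v\ge c|x'|^2$ on the part of $\partial\Omega$ with $\{v<1\}$): this gives $u(x)\ge c'|x'|^2$ on a fixed piece of $\partial\Omega$ near $0$, hence for each direction $e'\in\mathbb R^{n-1}$ the segment from $0$ to a boundary point of the form $(r e',g(re'))$ with $g(re')\le\sigma r^2$ lies in $\overline\Omega$, and $u$ takes value $\gtrsim r^2$ at its far endpoint while being $0$ at $0$. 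A supporting plane $\ell(x)=p\cdot x$ at $0$ must stay below $u$, but it also cannot be too negative somewhere, because $u\le 1$ on $S_1$ which contains a fixed ball; balancing these forces $|p'|\le C_0$ for the slope $p'$ of the best plane in the $e'$ direction — equivalently $\gamma_u(x')\ge c_0|x'|-C_0$. I expect the cleanest route is: $\gamma_u(x')=\Gamma_u(x',1)=\sup\{p\cdot(x',1): x_{n+1}=p\cdot x \text{ supports } u \text{ at }0\}$, and one produces, for each large $|x'|$, a supporting plane with the required slope by pushing a plane up from below until it touches the graph, using the quadratic lower bound on $\partial\Omega$ to control where the contact happens.

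For part b), the hypothesis $\gamma_u\ge C_0\,p'\cdot(x'-x_0')$ means $\Gamma_u(x)\ge C_0\, p'\cdot(x'-x_0'x_n)$, i.e. $u$ lies above a plane with a large slope $C_0 p'$ in the $x'$-directions (up to the affine term). I would feed this information into the equation: after a sliding along $x_n=0$ that kills the $x_0'$ term and using $\det D^2 u=x_n^\alpha$ with $u\le 1$ on $S_1$, the largeness of the gradient in the $p'$ direction together with the bounded Monge–Ampère mass forces $u$ to actually separate from the smaller plane $p'\cdot(x'-x_0')$ by at least a fixed amount $c_0$ on the hyperplane $x_n=1$. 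Concretely: consider the section of $u$ (restricted to $\{x_n=1\}$, or rather the sub-level set of $\gamma_u$) — the function $\gamma_u$ is convex on $\mathbb R^{n-1}$, bounded below by $0$, with $\inf\gamma_u=0$, and by a) grows linearly; if it dominated $C_0 p'\cdot(x'-x_0')$ but came within $c_0$ of $p'\cdot(x'-x_0')$ somewhere, one gets a contradiction with the volume/normalization bounds on $S_1(u)$ via the Monge–Ampère measure, because a convex function pinched that tightly between two nearly-parallel planes over an unbounded slab would have to have its graph concentrate, violating $\det D^2 u = x_n^\alpha$ and the inclusion $B_\mu(x^*)\subset\Omega\subset B_{1/\mu}^+$.

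The main obstacle I anticipate is part b): turning "the tangent cone lies above a plane of large slope" into "it lies above a parallel plane of slope $1$ plus a definite constant". The subtlety is that $\gamma_u$ is only lower semicontinuous and defined on all of $\mathbb R^{n-1}$, so one cannot argue purely locally — one must use the global structure, namely that $u$ solves $\det D^2 u=x_n^\alpha$ with the normalization of $\mathcal D_0^\mu$, which via the maximum principle (the version stated in the excerpt for boundary values of singular solutions) pins down how far $u$ can dip below a supporting plane. I would compare $u$ with an explicit subsolution built from $x_n^{(2+\alpha)/\cdots}$ plus a linear term in the $p'$ direction, chosen so that on $\partial S_1$ it lies below $u$ (using that $u=1$ on $\partial S_1\setminus\{0\}$ and the large-slope hypothesis near $0$), and conclude by the maximum principle that the improved inequality $\gamma_u\ge p'\cdot(x'-x_0')+c_0$ holds; producing that barrier with the right constants, uniform over the class, is the delicate computational point.
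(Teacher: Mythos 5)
Your overall strategy---comparison with an explicit subsolution and the maximum principle for the boundary values used in the class $\mathcal D_0^\mu$---is indeed the paper's strategy for both parts. But there is a real gap in the barrier you name for part a), and the sketch for part b) misses the key structural device.

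For part a) you propose to use the barrier $w_1$ from Lemma~\ref{l1}. That barrier satisfies $\nabla w_1(0)=0$ (it is $\sim c'|x'|^2$ near the origin, with no linear part), so the inequality $u\ge w_1$ only gives $\Gamma_u\ge\Gamma_{w_1}=0$, i.e.\ $\gamma_u\ge 0$ --- it does not produce the linear growth $\gamma_u(x')\ge c_0|x'|-C_0$. The essential point, which your ``push a plane up'' heuristic does not capture, is that the subsolution must carry a nonzero linear part at the origin. The paper compares with
\[
w:=c\,x'\cdot p' + c|x'|^2 + C\bigl(x_n^2-\mu^{-1}x_n\bigr),
\]
where $c$ is small enough that $w\le 1$ on $\Omega\subset B^+_{1/\mu}$ (the term $C(x_n^2-\mu^{-1}x_n)$ is $\le 0$ there), $C$ large enough that $\det D^2 w\ge \det D^2 u$, and $w(0)=0$. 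Then $\nabla w(0)=cp'-C\mu^{-1}e_n$, so $\gamma_u\ge\gamma_w = c\,p'\cdot x' - C\mu^{-1}$, and $p'$ arbitrary gives the linear bound. Without the linear term $cx'\cdot p'$ in the barrier, the argument collapses, so the gap here is not cosmetic.

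For part b) you correctly anticipate a barrier argument after a sliding killing the $x_0'$-term, but you leave vague exactly where the comparison is done, and you reach for a barrier involving $x_n^{2+\alpha}$. The paper's barrier is again purely quadratic,
\[
w:=\frac{C_0}{2}(x_1-qx_n)+\frac{C_0}{8}(x_1-qx_n)^2+\delta\bigl(x_2^2+\cdots+x_n^2\bigr)+\delta x_n,
\]
and --- this is the point your sketch misses --- the comparison is carried out not on all of $\Omega$ but on the \emph{restricted} domain $O=\Omega\cap\{x_1-qx_n>-1\}$. On $\partial O\setminus\partial\Omega$ one has $x_1-qx_n=-1$ so $w\le 0\le u$, while on $\partial O\cap\partial\Omega$ the hypothesis $u\ge C_0(x_1-qx_n)^+$ together with $u\le 1$ pins $x_1-qx_n\le 1/C_0$ and hence $w\le 1\le u$ (and $w(0)=0=u(0)$). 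This restriction to the strip is what makes the quadratic barrier admissible; without it, $w$ becomes large where $x_1-qx_n$ is very negative and the comparison fails. Once $u\ge w$ on $O$, the tangent plane of $w$ at $0$ is $\frac{C_0}{2}(x_1-qx_n)+\delta x_n$, giving $\gamma_u\ge \frac{C_0}{2}(x_1-q)+\delta$, which yields the stated $c_0$-improvement at and near $x_0'$. Your alternative compactness route (``two nearly-parallel planes pinching the graph'') is a genuinely different idea and could plausibly be made to work, but as stated it is not a proof; the paper's direct barrier argument, with the strip $O$ and the quadratic comparison function, is the clean way to close it.
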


\begin{proof}
a) We compare $u$ with
$$w:=c x' \cdot p' + c|x'|^2 + C (x_n^2 -\mu^{-1} x_n),$$
with $p'$ a unit vector.
We choose $c$ small such that $w \le 1$ in $\Om \subset B^+_{1/\mu}$ and $C$ large such that $\det D^2 w \ge \det D^2u$. We find $u \ge w$ hence
$$\gamma_u \ge \gamma_w=c_0 x' \cdot p'-C_0 x_n,$$
which proves part a).

b) Assume that $p'=e_1$ and let $x_0' \cdot e_1=q$. Then
$$u \ge x_n\gamma_u(\frac{x'}{x_n}) \ge C_0(x_1-q x_n)^+.$$
In the set $O=\Omega \cap \{x_1-q x_n >-1\}$ we compare $u$ with
$$w:= \frac {C_0}{2} (x_1-qx_n) + \frac{C_0}{8} (x_1-q x_n)^2 + \delta (x_2^2+\ldots + x_n^2) + \delta x_n,$$
where $\delta$ is small, fixed, depending on $\mu$.
Notice that if $C_0$ is sufficiently large we have $\det D^2 w \ge \det D^2 u$ and $w \le u$ on $\p O$.
Indeed, on $\p O \setminus \p \Om$ we have $$x_1-q x_n=-1 \quad \Rightarrow \quad w \le 0 \le u,$$ and in the set $\p O \cap \p \Omega$,
$$1 \ge u \ge C_0(x_1 - q x_n) \quad \Rightarrow \quad w \le 1.$$
From $w(0)=0$ and the inequalities above we obtain $w \le u$ on $\p \Om$.
 In conclusion $$\gamma_u \ge \gamma_w \ge \frac{C_0}{2}(x_1-q) + \delta.$$

\end{proof}

\begin{rem}\label{r5}
From the proof we see that we only need the weaker assumption $$u \ge C_0(x_1-q x_n) \quad \mbox{ on $\p \Omega$,}$$
in order to obtain the conclusion of part b).
\end{rem}

From part a) we see that $x_o' \in \R^{n-1}$ the point where $\gamma_u$ achieves its infimum belongs to $B_C'$. As a consequence of Lemma \ref{l7} we obtain the following corollary about the section $S_1(\gamma_u) \subset \R^{n-1}$.

\begin{cor}\label{c1} There exist universal constants $c_*$ small, $C_*$ large, such that
$$B_{c_*}' \subset S_1(\gamma_u) - x_o' \subset B_{C_*}'$$
$$S_{c_*}(\gamma_u)-x_o' \subset (1-c_*) (S_1(\gamma_u)-x_o').$$
\end{cor}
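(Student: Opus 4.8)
The plan is to deduce Corollary~\ref{c1} directly from Lemma~\ref{l7}, using only elementary convex geometry together with the two inequalities (a) and (b) already established. Recall that $\gamma_u$ is a nonnegative convex function on $\R^{n-1}$ with $\inf \gamma_u = 0$, and write $x_o'$ for a point where this infimum is (essentially) attained. By part (a), $\gamma_u(x') \ge c_0|x'| - C_0$, so the sublevel set $\{\gamma_u < 1\}$ is bounded: $S_1(\gamma_u) \subset B_{C_*}'(0)$ for a universal $C_*$, and since the minimum value is $0$ this set is nonempty and $x_o' \in B_{C_1}'$ for a universal $C_1$. After translating by $x_o'$ we may as well think of $\gamma_u$ as a nonnegative convex function with minimum $0$ at the origin; the outer inclusion $S_1(\gamma_u) - x_o' \subset B_{C_*}'$ is then immediate from (a), possibly enlarging $C_*$ to absorb $|x_o'|$.

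For the inner inclusion $B_{c_*}' \subset S_1(\gamma_u) - x_o'$, I would argue by contradiction: if some point $\bar x'$ with $|\bar x' - x_o'|$ small had $\gamma_u(\bar x') \ge 1$, then since $\gamma_u$ is convex and vanishes at $x_o'$, the supporting-plane inequality at $x_o'$ combined with convexity would force $\gamma_u$ to have a large slope near $x_o'$ — precisely, there is a unit vector $p'$ and a point $x_0'$ close to $x_o'$ with $\gamma_u \ge C_0 \, p'\cdot(x'-x_0')$ on all of $\R^{n-1}$ (this uses the boundedness from (a) to control $\gamma_u$ in the directions where it would otherwise be negative). Then part (b) of Lemma~\ref{l7} upgrades this to $\gamma_u \ge p'\cdot(x'-x_0') + c_0$ everywhere, in particular $\gamma_u \ge c_0$ at a point near $x_o'$ where it should be close to $0$, contradicting the choice of $x_o'$ as a near-minimizer (and in fact showing $\gamma_u(x_o') \ge c_0/2 > 0$, which contradicts $\inf\gamma_u = 0$ once we take $x_o'$ to be an actual minimizer, which exists since $S_1(\gamma_u)$ is a nonempty compact convex set on which the lower semicontinuous $\gamma_u$ attains its infimum). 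This yields $B_{c_*}'(x_o') \subset S_1(\gamma_u)$ for a universal $c_*$.

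For the last inclusion $S_{c_*}(\gamma_u) - x_o' \subset (1-c_*)\bigl(S_1(\gamma_u) - x_o'\bigr)$, I would again use the slope control of the previous step: having established $B_{c_*}'(x_o') \subset S_1(\gamma_u) \subset B_{C_*}'(x_o')$, the function $\gamma_u$ (translated to have its minimum at the origin) satisfies a uniform lower bound on the growth of its sublevel sets — any supporting plane at a boundary point of $S_1(\gamma_u)$ has slope at least a universal constant, by (b) applied as above. Hence on $\partial S_1(\gamma_u)$ we have $\gamma_u \ge 1$ while at distance $\ge (1-c_*)\cdot(\text{radius})$ inward from the boundary we can guarantee $\gamma_u > c_*$, which is exactly the nesting claimed after adjusting $c_*$.

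The main obstacle I anticipate is the careful bookkeeping in the contradiction argument of the second paragraph: to apply Lemma~\ref{l7}(b) one needs a genuine global linear lower bound $\gamma_u \ge C_0 p'\cdot(x'-x_0')$, not merely a local one, so one has to combine the one-sided information coming from a near-minimizing point with the coercive lower bound (a) to rule out cancellation in the far field. Once the geometry is set up correctly the constants are all universal and the three inclusions follow in sequence; there is no analytic difficulty beyond Lemma~\ref{l7} itself.
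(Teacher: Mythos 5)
Your outer inclusion and your argument for the middle inclusion are both sound and follow the same route the paper takes: use part~(a) of Lemma~\ref{l7} to bound $S_1(\gamma_u)$ and $x_o'$, and then, for the inner ball, argue by contradiction that a point $\bar x'$ near $x_o'$ with $\gamma_u(\bar x')\ge 1$ would, together with $\gamma_u\ge 0$, produce a global plane of slope $C_0$ touching near $x_o'$, so that Lemma~\ref{l7}(b) lifts $\gamma_u$ above $c_0$ near $x_o'$ and contradicts $\gamma_u(x_o')=0$. That is exactly the paper's mechanism.

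The gap is in the last paragraph. You claim that a universal lower bound on the slope of supporting planes at $\partial S_1(\gamma_u)$ forces $\gamma_u>c_*$ at points a fixed fraction inside $S_1(\gamma_u)$. For a convex function this implication goes the wrong way: a supporting plane at a boundary point $z'$, with $\gamma_u(z')=1$ and slope $q$, gives $\gamma_u(x')\ge 1+q\cdot(x'-z')$, which for $x'$ \emph{inside} is a quantity of the form $1-|q|\,|x'-z'|$; the \emph{larger} the slope the \emph{weaker} this lower bound, and with no universal \emph{upper} bound on $|q|$ you get nothing. (A lower bound on $|q|$ is in fact trivial from $\gamma_u(x_o')=0$ and $|z'-x_o'|\le C_*$; it does not come from (b), and it does not help.) Concretely, a function that is $0$ on $(1-\varepsilon)S_1$ and ramps up sharply to $1$ on the thin annulus has arbitrarily large boundary slope yet violates the nesting for every fixed $c_*>\varepsilon$ --- such examples are ruled out only by (b) itself. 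Also, ``at distance $\ge (1-c_*)\cdot(\text{radius})$ inward from the boundary'' lands you near $x_o'$, where $\gamma_u$ is small, which is the opposite of what is needed.

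The fix, and what the paper actually does in one stroke, is to prove the single statement: $\gamma_u$ cannot be smaller than $c_0/2$ at any point within a small universal distance $\eps$ of $\partial S_1(\gamma_u)$. You prove this by rerunning the contradiction of your middle paragraph with $x_o'$ replaced by an arbitrary point $y_0'$ near $\partial S_1(\gamma_u)$ with $\gamma_u(y_0')$ small: the steep secant from $y_0'$ to the boundary produces a supporting plane of slope $\ge C_0$ through a boundary point $x_0'$; using $\gamma_u\ge 0$ this extends to the global bound $\gamma_u\ge C_0\,p'\cdot(x'-x_0')$; then (b) gives $\gamma_u(y_0')\ge c_0-\eps$, a contradiction. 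Once this layer bound is in hand, both the inner ball and the dilation inclusion follow by elementary convexity together with the outer ball $S_1(\gamma_u)-x_o'\subset B_{C_*}'$: since $\gamma_u(x_o')=0<c_0/2$, the point $x_o'$ is at distance $>\eps$ from $\partial S_1(\gamma_u)$; and if $\gamma_u(x')<c_*$ with $c_*\le\min(c_0/2,\eps/(2C_*))$, the point $y'=x_o'+(x'-x_o')/(1-c_*)$ satisfies $|y'-x'|\le 2c_*C_*\le\eps$ and hence lies in $S_1(\gamma_u)$.
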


\begin{proof}
We only need to show that $\gamma_u$ cannot be too small near $\p S_1(\gamma_u)$.
Assume by contradiction that $\gamma_u(y_0') \ll 1$ for some $y_0'$ near $\p S_1(\gamma_u)$. Then we can find a plane of slope $C_0$ i.e. $C_0 p'\cdot (x'-x_0') \le \gamma_u(x')$ with $x_0' \in \p S_1(\gamma_1)$ sufficiently close to $y_0'$. We apply part b) of Lemma \ref{l7} and obtain that $\gamma_u$ is greater than a universal constant in a neighborhood of $x_0'$ and we reach a contradiction.

\end{proof}

Next we apply the corollary above for the rescalings $u_h$ of $u$ defined in Lemma \ref{l6} (see Remark \ref{r4}). For any $h \in (0,1]$, we have
$$u_h(x)=\frac 1 h \tilde u(D_h x), \quad \quad \mbox{with} \quad \tilde u(x)=u(A_hx).$$
Notice that $\gamma_u$ is just a translation of $\gamma_{\tilde u}$.
Since
$$\Gamma_{u_h}(x)=\frac 1 h \Gamma_{\tilde u}(D_h x)$$
we divide by $x_n$ and obtain
$$ \gamma_{u_h} \left (\frac{x'}{x_n}\right )=\frac {d_n}{ h } \, \, \gamma_{\tilde u}\left(\frac{ D_h' x'}{d_n x_n}\right )$$
or
$$\gamma_{u_h}(x')=\frac{d_n}{h} \, \, \gamma_{\tilde u}\, (d_n^{-1} \, D_h'x') \quad \quad \mbox{with} \quad D_h'=diag(d_1,..,d_{n-1}).$$

This implies that $$d_n^{-1}D_h'\, \,  S_s(\gamma_{u_h})= S_{s \, h/d_n}(\gamma_{\tilde u}).$$

We apply Corollary \ref{c1} for the sections $S_1$ and $S_{c_0}$ of $\gamma_{u_h}$ and use also that $\gamma_u$ is a translation for $\gamma_{\tilde u}$. We obtain the following inclusions for the sections $S_t(\gamma_u)$, $t:=h/d_n$
\begin{equation}\label{51}
d_n^{-1} D_h' \, B_{c_*}' \quad \subset \quad S_t(\gamma_u) -x_o' \quad  \subset \quad d_n^{-1} D_h' \, B_{C_*}', \quad  \quad \quad t=\frac{h}{d_n},
\end{equation}
and
\begin{equation}\label{52}
S_{c_* t}(\gamma_u)-x_o' \quad \subset \quad (1-c_*)\left(S_t(\gamma_u) -x_o'   \right).
\end{equation}

From Lemma \ref{l6} we know that as $h$ ranges from 1 to 0 the parameter $t=h/d_n$ covers an interval $[0,c]$. The inclusion \eqref{51} says that the sections $S_t(\gamma_u)$ are {\it balanced} around the minimum point $x_o$, i.e. there exists an ellipsoid $E$ such that
$$c_0 E \subset S_t(\gamma_u)-x_o' \subset C_0 E, \quad \quad \quad E=d_n^{-1}D_h B_1'.$$
A dilation of the ellipsoid $E$ above is equivalent also to the normalized diameter $\mathcal S'(u)$. Indeed, from the definition of $D_h$
\begin{equation}\label{53}
h^{-1/2} D_h' \, B_1' \, \, \subset \, \, \mathcal S_h'(u) \, \, \subset \, \, C(n) h^{-1/2} D_h' \, B_1'.
 \end{equation}
 The inclusions \eqref{51}, \eqref{53} show the relation between the sections $S_t(\gamma_u)$ and $\mathcal S_h'(u)$.

 Property \eqref{52} implies that
 \begin{equation}\label{54}
\gamma_u(x') \ge c |x'-x_o'|^M \quad \quad \mbox{in $B_c'(x_o)$,}
\end{equation}
for some $M$ large universal. From the fact that the sections of $\gamma_u$ are balanced one can also prove that $\gamma_u \in C^{1,\beta}$. Thus each small section $S_t(\gamma_u)$ contains a small ball of radius $t^{1/(1+\beta)}$ and it is contained in a ball of radius $t^{1/M}$. However, these bounds are not sufficient for the proof of Proposition \ref{p4}.

We remark that so far in the proof we only used that the Monge-Ampere measure of $u$ is bounded by above and below by multiples of $x_n^\alpha$. Below we use the estimates of Section 4 and the fact that the Monge-Ampere measure is precisely $x_n^\alpha$, and conclude that $\gamma_u$ has quadratic growth near $x_o'$.
Precisely we show the following.

\begin{lem}\label{l8}
There exists universal constants $c_1$, $C_1$ such that
$$c_1|x'-x_o|^2 \le \gamma_u(x') \le C_1 |x'-x_o'|^2 \quad \quad \mbox{in $B_{c_1}'(x_o')$}.$$
\end{lem}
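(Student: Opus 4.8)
The plan is to prove Lemma \ref{l8} by studying the restriction of $u$ to a two-dimensional slice and applying the Pogorelov-type estimate of Theorem \ref{Po}. Fix the minimum point $x_o'$ of $\gamma_u$; after a translation we may assume $x_o'=0$ and after a rotation in $x'$ we may study growth in the $e_1$ direction. The tangent cone $\Gamma_u(x)=x_n\gamma_u(x'/x_n)$ is itself a (generalized) solution of $\det D^2\Gamma_u=0$ that is a subsolution, and $u\ge\Gamma_u\ge 0$. The key point is that for the quadratic \emph{upper} bound we must actually use that $\det D^2u = x_n^\alpha$ exactly (not merely $\sim x_n^\alpha$), which is why the lemma is stated only for the class $\mathcal D_0^\mu$ and not for the weaker hypothesis used up to Corollary \ref{c1}.

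First I would establish the quadratic lower bound $\gamma_u(x')\ge c_1|x'|^2$ near the origin: this should follow directly from Corollary \ref{c1} together with \eqref{52}, which already give that the sections $S_t(\gamma_u)$ shrink geometrically and are balanced around $0$; feeding this into a barrier argument (comparing $u$ from below with an explicit function whose cone has quadratic growth, in the spirit of Lemma \ref{l7}b and \eqref{54}) upgrades the bound \eqref{54} from polynomial growth $|x'|^M$ to genuine quadratic growth. Actually the lower bound also follows from \eqref{31}: since $u\ge c\,x_n^{1/\delta}$ and $u\ge\Gamma_u$, one interpolates; but the barrier route is cleaner. For the lower bound it would also suffice to note that $\gamma_u$, being a finite convex function with $0$ as a nondegenerate-type minimum in the sense of \eqref{52}, is bounded below by a quadratic.

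The main work is the \emph{upper} bound $\gamma_u(x')\le C_1|x'|^2$. Here I would argue by contradiction/compactness or directly: consider the rescalings $u_h$ from Lemma \ref{l6}, for which $\gamma_{u_h}$ is, up to a translation and the dilation $d_n^{-1}D_h'$, the rescaled cone; by \eqref{51} the sections $S_t(\gamma_u)$ are balanced, so $\gamma_u$ is comparable to a quadratic form in the ellipsoidal sense, and it remains to rule out that the ellipsoids $E=d_n^{-1}D_h'B_1'$ degenerate (i.e. that some axis ratio blows up) as $t\to 0$. To control this I would look at the two-dimensional section: restrict $u$ to a plane containing $e_n$ and the direction $e_1$ where one suspects fast growth, obtaining a convex function of two variables whose Monge-Ampere measure is still controlled by $x_n^\alpha$, with a linear-in-$x_n$ boundary behavior coming from the cone structure; then I would apply Theorem \ref{Po} (with $x_1$ playing the role of the ``good'' variable and $u$ vanishing on the appropriate level set) to get $u_{11}|u|\le C$, which translates into $v_{11}|v|\le C$ for the level-set graph and hence into the quadratic bound $\gamma_u\le C_1|x'|^2$ after dividing by $x_n$ and letting $x_n\to 0$. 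Alternatively Theorem \ref{abo} applies directly, since the cone has exactly the form $u=\sigma x_n$ on the relevant boundary piece and the equation $u_n^\alpha\det D^2u=\text{const}$ is what degenerate slices produce.

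\textbf{Main obstacle.} The hard part will be making rigorous the passage from the global cone $\Gamma_u$ (which is only lower semicontinuous and solves $\det D^2=0$) to a genuine solution of a uniformly-parabolic-type Monge-Ampere equation to which Pogorelov's estimate applies. The natural fix is to apply Theorem \ref{Po}/\ref{abo} not to $\Gamma_u$ itself but to $u$ (or to $u_h$) on the section $S_t$, where $u$ is a legitimate solution of $\det D^2u=x_n^\alpha$ with the required boundary structure ($u$ constant on $\partial S_t\setminus G$, linear growth near $G$), derive the estimate $u_{11}|u|\le C(n,\max|u_1|)$ there with constant independent of $t$ by the normalization of Lemma \ref{l6}, and only then pass to the limit $t\to 0$ to recover the bound on $\gamma_u$. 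The delicate points are checking that $\max_{S_t}|u_1|$ stays bounded (this is where Lemma \ref{l7} and the balancedness \eqref{51} are used) and that the two-dimensional slicing preserves the structural form $f=f(x_2,\dots,x_n)$ required by Theorem \ref{Po}, which one arranges by choosing the slice through the minimum direction of the quadratic part of $\gamma_u$.
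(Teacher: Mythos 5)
Your proposal correctly identifies Theorem \ref{Po} as the tool for the \emph{upper} bound $\gamma_u \le C_1|x'|^2$, and the idea of applying the Pogorelov-type estimate to $u$ (via rescalings), rather than directly to the singular cone $\Gamma_u$, matches the paper's Step 1: one sets $w = u - cx_n$, observes that the dilations $w_\lambda(x)=\lambda^{-1}w(\lambda x)$ satisfy $\det D^2 w_\lambda = c(\lambda)x_n^\alpha$ with uniform gradient bounds in $\{w_\lambda<0\}$, and lets $\lambda\to 0$ so that $w_\lambda|_{x_n=1}\to |\gamma_u-c|$; since $x_n^\alpha$ depends only on $x_n$, Theorem \ref{Po} applies in each tangential direction and yields $|\gamma_u-c|\,\p_{11}\gamma_u\le C$. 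Your two-dimensional slicing variant is dispensable—restricting a Monge--Amp\`ere solution to a plane does not preserve the equation, so the blow-down/rescaling formulation is the clean way to proceed—but the mechanism is essentially the same.

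The genuine gap is your treatment of the quadratic \emph{lower} bound $\gamma_u \ge c_1|x'|^2$, which you assert follows ``directly from Corollary \ref{c1} together with \eqref{52}'', or from the fact that $\gamma_u$ has a ``nondegenerate-type minimum in the sense of \eqref{52}.'' This is not correct. Corollary \ref{c1} and \eqref{52} give geometric decay of the sections $S_t(\gamma_u)$, which yields only the polynomial estimate \eqref{54}, $\gamma_u\ge c|x'|^M$ for some large universal $M$—the sections are balanced around $x_o'$ but may still become highly eccentric ellipsoids as $t\to 0$. The paper states explicitly after \eqref{54} that ``these bounds are not sufficient for the proof of Proposition \ref{p4}.'' Your remark that the lower bound ``also follows from \eqref{31}'' by interpolation is likewise not a valid argument: $u\ge c\,x_n^{1/\delta}$ controls the vertical growth of $u$ and says nothing about the tangential growth of $\gamma_u$. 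The lower bound is in fact the hard half of the lemma. The paper proves it (Step 2) by passing to the Legendre transform $u^*$, identifying the contact set $K=\{u^*=0\}=\{\xi_n\le-\gamma_u^*(\xi')\}$, and applying the level-set curvature estimate Theorem \ref{abo} to $u^*$ (which satisfies $(u^*_n)^\alpha\det D^2u^*=1$ on $\{u^*>0\}$), after an approximation step to cope with the fact that $u^*$ is not strictly increasing in $\xi_n$; this yields $D^2\gamma_u^*\le CI$ near $0$, hence $D^2\gamma_u\ge cI$ by duality. You mention Theorem \ref{abo} only as an alternative route to the \emph{upper} bound; the dual/Legendre-transform idea that makes it deliver the lower bound is missing from your plan and is precisely where the difficulty lies.
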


This estimate for $\gamma_u$ easily implies Proposition \ref{p4}. Indeed, the lemma gives $$t^{1/2}B_c' \subset S_t(\gamma_u) -x_o' \subset t^{1/2} B_C',$$
which together with \eqref{51} implies that for all $i<n$,
$$ c d^{1/2}_n \le d_i h^{-1/2} \le C d^{1/2}_n.$$
Then
$$\left(\prod_{i=1}^{n-1} d_i^2 \right) \, d_n^{2+\alpha}=h^n \quad \Rightarrow \quad ch \le {d_n}^{n+1+\alpha} \le Ch  $$
and Proposition \ref{p4} follows from \eqref{53}.

\

Below we prove Lemma \ref{l8}. After performing a sliding along $x_n=0$ of bounded norm, we may assume that $x_o'=0$.

\

{\it Step 1:} In step 1 we use Theorem \ref{Po} in the set $\{ u < c x_n\}$ to obtain
\begin{equation}\label{55}
D^2\gamma_u \le C I \quad \quad \mbox{in} \quad \{\gamma_u <c \}.
\end{equation}

In order to apply Theorem \ref{Po} for $u$ we first need to bound $|\nabla u|$ in the set $\{u<c x_n\}$ for some $c$ small. To this aim we observe that the projection of $\Om=S_1(u)$ along $e_n$ into $\R^{n-1}$ contains the ball $B_{1/C_0}'$, with $C_0$ as in Lemma \ref{l7}. Otherwise we can find a direction, say $x_1$ such that
$$\Om \subset \{ x_1 \le 1/C_0 \},$$
hence $$u \ge C_0 x_1 \quad \mbox{on $\p \Om$},$$
and by Lemma \ref{l7} (see Remark \ref{r5}) we find $$\gamma_u \ge x_1 + c_0,$$ and we contradict that $\gamma_u(0)=0$ (since $x_o'=0$.)

Since $ \Omega \subset B_C^+$ contains a ball $B_\mu(x^*)$, and its projection contains $B_{1/C_0}'$ and $0 \in \p \Om$, we see from its convexity that it must contain also $B_r(\varrho e_n)$ for some small fixed universal constants $\varrho$, $r$. Now we use $u \ge 0$, $u(0)=0$ and conclude that $|\nabla u| \le C$ in the convex set generated by $0$ and $B_{r/2}(\varrho e_n)$. On the other hand by \eqref{31} and \eqref{54} we see that this convex set contains the set $\{ u < cx_n \}$ if $c$ is sufficiently small.

 Now let $w:=u-cx_n$ and notice that the rescalings
 $$w_\lambda(x):=\frac{1}{\lambda} w(\lambda x), \quad \quad \det D^2 w_\lambda = c(\lambda) x_n^\alpha,$$
 have the same gradient bound in $\{w_\lambda <0\}$ and they converge uniformly on $x_n=1$ to $|\gamma_u - c|$. By Theorem \ref{Po} we find
 $$|w_\lambda| \, \, \p_{11}w_\lambda \, \le C,$$
 hence
 $$|\gamma_u-c| \, \, \p_{11} \gamma_u  \, \le C,$$
 which proves step 1.

\

In the course of the proof we showed also that the segment $[0,\varrho x_o] \subset S_1(u)$ with $x_o:=(x_o',1)=e_n$. We apply this for the rescaling $u_h$ and obtain
$$[0,\varrho d_n  \, x_o] \subset S_h(u).$$ Using the bounds on $d_n$ from Lemma \ref{l7} we find (see also \eqref{31})
\begin{equation}\label{57}
 c t^{1/\delta} \le  u(t x_o',t) \le C t^{4/3}.
 \end{equation}
We can extend this inequality at points $y'$ near $x_o'$,
\begin{equation}\label{56}
c t^{1/\delta} \le u(t y',t) - t \gamma_u(y') \le C t^{4/3} \quad \quad \mbox{for all $y' \in B_c'$.}
\end{equation}
Indeed, \eqref{56} follows by applying \eqref{57} to the function $u-p \cdot x$ where $p \cdot x$ is the linear function which restricted to $x_n=1$ becomes tangent by below to $\gamma_u$ at $y'$. From Step 1 we see that when $|y'|$ is small, the slope of $l$ is also small and $u-l$ (renormalized at its $1/2$ section) belongs to a class $\mathcal D_0^c (\infty,...,\infty)$. Therefore we can apply \eqref{57} for $u-l$ and obtain the desired inequality \eqref{56}.

\

{\it Step 2:} In step 2 we apply Theorem \ref{abo} for the Legendre transform of $u$ and obtain
$$D^2 \gamma_u \ge c I \quad \quad \mbox{in $B_c'$.}$$

 Let $u^*$ denote the Legendre transform of $u$,
 $$u^*(\xi):= \sup_{x \in \ov \Om} \left( x \cdot \xi - u(x)\right).$$
Since $u$ is lower semicontinuous the supremum is always achieved at some point $x \in \ov \Om$. We are interested in the behavior of $u^*(\xi)$ for $|\xi| \le c$ small. From the boundary values of $u$ we see that the maximum is realized either at $0$ or at some $x \in \Om$, and clearly $u^* \ge 0$. We define $K$ as the convex set
$$K:=\{ u^*=0\}.$$
 If $\xi \in K$ then the maximum is achieved at $0$, and this happens if and only if $$\xi' \cdot x' + \xi_n \le \gamma_u(x') \quad \mbox{for all $x'$} \quad \quad \Leftrightarrow \quad \xi_n \le - \gamma_u^* (\xi'),$$
where $\gamma_u^*$ represents the Legendre transform of $\gamma_u$.
In conclusion
$$K=\{\xi_n \le - \gamma_u^* (\xi')  \}.$$
From Step 1 and \eqref{54} we know that
$$c|x'|^M \le \gamma_u(x') \le C |x'|^2  \quad \mbox{in $B_c'$}$$
hence
\begin{equation}\label{58}
\{ \xi_n \le -C |\xi'|^\frac{M}{M-1} \}  \subset K \subset \{ \xi_n  \le - c |\xi'|^2 \} \quad \mbox{in $B_c$}.
\end{equation}

 Since $u$ is strictly convex in $\Om$ we obtain that
 \begin{equation}\label{59}
 u^* \in C^1(B_c),
 \end{equation}
 and in the set $\{u^*>0\}$ we have
 $$\det D^2 u^*(\xi) =(\det D^2 u (x))^{-1}=x_n^{-\alpha}=(u^*_n(\xi))^{-\alpha},$$
thus, $u^*$ solves the equation
\begin{equation}\label{59.5}
(u^*_n)^\alpha \det D^2 u^* =1 \quad \quad \mbox{in $B_c \setminus K$.}
\end{equation}
Also from \eqref{56} and the definition of Legendre transform we find
$$ u^*(\xi) \ge c \left ( (\xi_n + \gamma_u^*(\xi'))^+ \right )^4 \quad \quad \mbox{and} \quad |\nabla u^*| \le 1 \quad \mbox{in $B_c$},$$
which together with \eqref{58} implies that
$$O:= \{ u^* < \eta (\xi_n + \eta) \} \subset B_{c_1},$$
with $\eta$ and $c_1$ sufficiently small universal constants. Moreover, in $O$, the Lipschitz norms of the level sets of $u^*$ (viewed as graphs in the $-e_n$ direction) are bounded by a universal constant.

 Next we apply Theorem \ref{abo} for $u^*$ in $O$ and obtain universal bounds for the second derivatives of the level sets of $u^*$ in a fixed neighborhood of the origin. Writing this for $K$, the $0$ level set, we obtain the desired result of Step 2 since, in a neighborhood of $0 \in \R^{n-1}$, $$D^2 \gamma_u^* \le C I \quad \Rightarrow \quad D^2 \gamma_u \ge c I.$$

We cannot apply directly Theorem \ref{abo} since $u^*$ is not strictly increasing in the $e_n$ direction. However we show below using approximations that the theorem still applies in our case i.e. for functions $u^*$ that satisfy \eqref{59}, \eqref{59.5}. Formally, we write that $u^*$ solves the equation in $B_c$ with right hand side $f(u^*)$ with $f=\chi_{(0,\infty)}$, and then apply Theorem \ref{abo}.

\

{\it Approximation.} Define $$v_\eps= \max \left \{ u^*, \, \, \eps \left (1+ \xi_n + \frac 1 2 |\xi|^2\right ) \right \}$$
and we remark that in $B_{c_1}$, $v_\eps>0$, it is strictly increasing in the $e_n$ direction, $|\nabla v_\eps| \le 1$ and its level sets have Lipschitz norm bounded by a universal constant. In the set $\{v_\eps>u^*\} \cap B_{c_1}'$ we have
$$(\p_n v_\eps)^ \alpha \det D^2 v_\eps = \eps^{n+\alpha},$$
hence
$$ (\p_n v_\eps)^ \alpha \det D^2 v_\eps \ge f_\eps(v_\eps)  \quad \mbox{in $B_{c_1}$,} $$
in viscosity sense, with
$f_\eps$ a nondecreasing function satisfying $$f(s)=\eps^{n+\alpha} \quad \mbox{if $s \le \eps^{1/2}$}, \quad \quad f(s)=1 \quad \mbox{if $s \ge 2 \eps^{1/2}$.} $$

 We define $\bar v_\eps$ as the viscosity solution to
 $$(\p_n \bar v_\eps)^\alpha \det D^2 \bar v_\eps =f_\eps(\bar v_\eps) \quad \quad \mbox{in}\quad O_\eps:= \{v_\eps< \eta(\xi_n + \eta)\},$$
 $$ \bar v_\eps=v_\eps \quad \mbox{on $\p O_\eps$.} $$

 The existence of $\bar v_\eps$ follows by Perron's method and since $v_\eps$ is a subsolution, we have $\bar v_\eps \ge v_\eps$. This implies that $\bar v_\eps$ is strictly increasing in the $e_n$ direction and, $|\nabla \bar v_\eps|$ and the Lipschitz norm of the level sets of $\bar v_\eps$ are bounded by a universal constant. Therefore we can apply Theorem \ref{abo} for $\bar v_\eps$ in $O_\eps$ and obtain the uniform second derivative bounds for its level sets around the origin.

 It remains to show that $\bar v_\eps$ converges to $u^*$. Assume that a subsequence of $\bar v_\eps$ converge to $\bar v_0$. Then $\bar v_0$ is defined in $O$, $\bar v_0= u^*$ on $\p O$, and by construction $$\bar v_0 \ge u_*.$$

 We prove that also $\bar v_0 \le u^*$. Assume by contradiction that the maximum of $\bar v_0 -u^*$ is positive and occurs at a point $\xi_0$. From the convergence of $\bar v_\eps$ to $\bar v_0$ we obtain
 $$ (\p_n \bar v_0)^\alpha \det D^2 \bar v_0 =1 \quad \quad \mbox{in the set $\{ \bar v_0>0 \}\cap O$,} $$
and the equation is satisfied in the classical sense. We find $$\xi_0 \notin \{ u^*>0 \} \, \, \subset \{ \bar v_0 >0 \},$$ since in $\{u^*>0\}$
both $u^*$ and $\bar v_0$ solve the same equation. On the other hand if $\xi_0 \in \{u^*=0\}$ then (see \eqref{59}) we obtain $\nabla \bar v_0(\xi_0)=0$ thus
$$\bar v_0 \ge \bar v_0 (\xi_0) >0,$$
and we reach again a contradiction.

\qed

Next we prove the induction step for Theorem \ref{T3}.

 \begin{prop}\label{p5}
 Assume Theorem \ref{T3} holds for all $l \le k-1$ for some $k$ with $1 \le k \le n-2$. Then Theorem \ref{T3} holds also for $k$.
 \end{prop}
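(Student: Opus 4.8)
The plan is to follow closely the proof of Proposition \ref{p4} (the case $k=0$), with the coordinates $z=(x_{k+1},\ldots,x_{n-1})$ now playing the role previously played by $x'$, and with the uniformly convex coordinates $y=(x_1,\ldots,x_k)$ remaining passive throughout. Under the hypothesis of Proposition \ref{p5}, Theorem \ref{T3} holds for all $l\le k-1$, so Lemma \ref{l5} is available: for every $h\in(0,1]$ we obtain a normalized solution $u_h=\frac1h u(T_hA_hD_hx)$ in $\mathcal D_0^{\bar\mu}(1,\ldots,1,\infty,\ldots,\infty)$, with $d_i h^{-1/2}\sim 1$ for $i\le k$, $d_n\sim x_h^*\cdot e_n$, $ch^{3/4}\le d_n\le Ch^\delta$, and $\bigl(\prod_1^{n-1}d_i^2\bigr)d_n^{2+\alpha}=h^n$. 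As in Proposition \ref{p4}, the statement will reduce to showing that the $z$--section of the tangent cone of $u$ at the origin grows quadratically near its minimum point: together with $d_i\sim h^{1/2}$ for $i\le k$ this gives $d_j\sim d_n^{1/2}h^{1/2}$ for $k<j<n$, hence $d_n=h^{1/(n+1-k+\alpha)}$, and then $\mathcal S_h'(u)$ has $z$--extent of order $h^{-1/2}d_j\sim d_n^{1/2}=h^\beta$ with $\beta=\frac1{2(n+1-k+\alpha)}$, which is the asserted inclusion.

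The first new ingredient is that the tangent cone $\Gamma_u$ does not depend on the $y$ variables. Indeed, a supporting plane $\ell(x)=p'\cdot y+q\cdot z+sx_n$ of $u$ at $0$ must satisfy $p'\cdot y\le u(y,0,0)=\psi_u(y)$ on $G$; since $\psi_u$ is uniformly convex with $\psi_u(0)=0$ and $\nabla\psi_u(0)=0$ we have $\psi_u(y)\le\frac{\mu^{-1}}{2}|y|^2$ near $0$, which forces $p'=0$. Hence $\Gamma_u=\Gamma_u(z,x_n)$, and we set $\gamma_u(z):=\Gamma_u(0,z,1)$, a convex function on $\R^{n-1-k}$ with $\inf\gamma_u=0$ (since $\nabla u(0)=0$) attained at some $z_o$ with $|z_o|\le C$. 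Exactly as in Lemma \ref{l7} and Corollary \ref{c1}, barrier comparisons yield the linear lower bound $\gamma_u(z)\ge c_0|z|-C_0$, the slope-to-translation improvement of Lemma \ref{l7}, and the fact that the sections $S_t(\gamma_u)$ are balanced around $z_o$; and, using that the rescalings of Lemma \ref{l5} leave the $y$--data invariant, these sections control the $z$--part of $\mathcal S_h'(u)$ through inclusions analogous to \eqref{51}--\eqref{53}.

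The heart of the argument is the analogue of Lemma \ref{l8}: $c_1|z-z_o|^2\le\gamma_u(z)\le C_1|z-z_o|^2$ near $z_o$. After a bounded sliding along $x_n=0$ we may take $z_o=0$. For the upper bound we apply Theorem \ref{Po} to $w=u-cx_n$ in $\{u<cx_n\}$: the right hand side $x_n^\alpha$ depends only on $x_n$, so the estimate applies with any $z$--coordinate in the role of $x_1$; the required gradient bound for $w$ follows from the same geometric argument (projection of $S_1(u)$ onto $\R^{n-1}$ contains a fixed ball, interior ball at a fixed height) as in the case $k=0$; and since $\Gamma_u$ is $y$--independent the parabolic rescalings of $w$ converge on $\{x_n=1\}$ to $|\gamma_u(z)-c|$, so passing the Pogorelov bound to the limit gives $D^2_{zz}\gamma_u\le CI$ where $\gamma_u<c$. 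For the lower bound we pass to the Legendre transform $u^*$, which is $C^1$ near $0$, solves $(u^*_n)^\alpha\det D^2u^*=1$ off the set $K=\{u^*=0\}$, and whose level sets have universally bounded Lipschitz norm near $0$ (using the two--sided bounds for $u$ along $[0,\varrho\, x_o]$ exactly as in \eqref{56}--\eqref{58}); applying Theorem \ref{abo} to $u^*$ via the same approximation scheme as in Lemma \ref{l8} gives curvature bounds for the level sets of $u^*$, in particular for $K$, hence $D^2_{zz}\gamma_u\ge cI$ near $0$.

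I expect the main obstacle to be precisely this last step: verifying that the presence of the uniformly convex $\xi_y$--directions in $u^*$ — which enter only as an additional uniformly convex summand in the relevant Legendre picture — does not destroy the monotonicity of $u^*$ in the $e_n$ direction, the description of $K$ as a graph over the $(\xi_y,\xi_z)$--plane, or the Lipschitz control of the level sets needed to run Theorem \ref{abo} and its approximation argument. Once the quadratic growth of $\gamma_u$ is in hand, deriving the bounds $d_j\sim d_n^{1/2}h^{1/2}$ and the conclusion $\mathcal S_h'(u)\subset\{|(x_{k+1},\ldots,x_{n-1})|\le Ch^\beta\}$ is the same computation as at the end of Proposition \ref{p4}.
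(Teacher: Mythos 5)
You have the right architecture: this is exactly the route of the paper (tangent cone independent of $y$; Lemma \ref{l9} in place of Lemma \ref{l7}; balanced $z$--sections of $\gamma_u$ via the rescalings of Lemma \ref{l5}; quadratic growth of $\gamma_u$ via Theorem \ref{Po} for the upper bound and Theorem \ref{abo} applied to the Legendre transform for the lower bound; then the computation $d_n^{n+1-k+\alpha}\sim h$). However, you explicitly flag the Legendre-transform step (``I expect the main obstacle to be precisely this last step'') without resolving it, and that step is the crux of Proposition \ref{p5}; so the proposal is not a complete proof.

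The missing piece is the observation that $u^*$ solves an \emph{obstacle problem} with obstacle $\psi_u^*(\xi_y)$, the Legendre transform of the $y$--boundary data. One has $u^*\ge\psi_u^*(\xi_y)$, $u^*\in C^1$ near $0$, and
$$(u^*_n)^\alpha\det D^2u^*=1\quad\mbox{in }\{u^*>\psi_u^*\},$$
so that, in the sense of the approximation in the paper, $u^*$ satisfies $(u^*_n)^\alpha\det D^2u^*=f(u^*-\psi_u^*)$ with $f=\chi_{(0,\infty)}$. Since $\psi_u^*$ is a function of $\xi_y$ only, this right-hand side has no dependence on the $\xi_z$ variables, which is precisely the admissible dependence $f(x_2,\ldots,x_{n-1},u)$ in Theorem \ref{abo} once an $\xi_z$--direction is taken as the distinguished $x_1$. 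This is what gives curvature bounds of the level sets of $u^*$ \emph{in the $z$--direction}, and applying them to the $0$--level set, restricted to $\{\xi_y=0\}$, gives $D^2_z\gamma_u\ge cI$. Your characterization of the $y$--directions as ``an additional uniformly convex summand in the relevant Legendre picture'' is misleading --- $u^*$ does not decompose additively; the correct structure is the obstacle-problem structure above. Relatedly, $K=\{u^*=0\}$ is not a graph over the full $(\xi_y,\xi_z)$--plane: it is contained in the lower-dimensional set $\{\xi_y=0\}$ (since $u^*\ge\psi_u^*>0$ for $\xi_y\ne0$), and one bounds the $z$--curvature of the level sets, then restricts. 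One further point you gloss over: the Lipschitz control of the level sets of $u^*$ in the $z$--direction requires a separate argument (increasing the tangent plane of $u$ until it touches the boundary data at some $(y_0,0,0)$ and comparing with the lower barrier from Lemma \ref{l9} part a)), and the gradient bound for $w=u-cx_n$ must be shown in a neighborhood of the full $k$--dimensional set $G$, not just near the origin, using the bounded-slope supporting planes along $G$.
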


We recall the notation of Section 3 that we denote points in $\R^n$ by
$$x=(y,z,x_n)  \quad y=(x_1,..,x_k) \in \R^k \quad z=(x_{k+1},..,x_{n-1}) \in \R^{n-1-k}.$$

 The proof of Proposition \ref{p5} is very similar to the proof of Proposition \ref{p4}, in most statements we just have to replace $x'$ by $z$. We provide the details below.

 {\it Remark:} In view of Lemma \ref{l5} we may assume, after relabeling $\mu$, that all renormalized solutions $u_h$ given in Lemma \ref{l5} are in the same class $\mathcal D_0^\mu(1,..,1, \infty,...,\infty).$

 Let $\Gamma_u$ denote the tangent cone of $u$ at the origin. Any supporting plane for $u$ at the origin has $0$ slope in the $y$ direction, hence $\Gamma_u$ does not depend on the $y$ variable.

We define the $n-k-1$ dimensional function $\gamma_u(z)$ as being the restriction of $\Gamma_u$ to $x_n=1$ i.e
$$\gamma_u(z):= \Gamma_u(y,z,1).$$
By construction the upper graph of $\gamma_u$ is a closed set and
$$u(x) \ge \Gamma_u(x)=x_n \gamma_u(\frac{z}{x_n}).$$
Since $\nabla u(0)=0$ we have $\gamma_0 \ge 0$ and $\inf \gamma_u=0$. In the next lemma we obtain some useful properties of $\gamma_u$.

\begin{lem}\label{l9}

a) $$\gamma_u(z) \ge c_0|z|-C_0.$$

b) If $$\gamma_u \ge C_0 p_z \cdot (z-z_0),$$for some unit vector $p_z\in \R^{n-k-1}$, $|p_z|=1$ and some $z_0$ then
$$\gamma_u \ge p_z \cdot (z-z_0) + c_0.$$
The constants $c_0$, $C_0$ above are universal constants.
\end{lem}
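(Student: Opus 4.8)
The plan is to mirror the proof of Lemma \ref{l7}, using barrier comparisons that exploit the fact that $\Gamma_u$ — and hence $\gamma_u$ — is independent of the $y$ variable. For part a), I would compare $u$ on $S_1(u)=\Om$ with a quadratic barrier of the form
\[
w(x):= c\, p_z\cdot z + c\,(|z|^2 + |y|^2) + C\,(x_n^2 - \tfrac1\mu x_n),
\]
where $p_z$ is an arbitrary unit vector in $\R^{n-k-1}$. Choosing $c$ small (depending on $\mu$, $n$, $\alpha$) guarantees $w\le 1$ on $\Om\subset B^+_{1/\mu}$, and hence $w\le u$ on $\p\Om\setminus\{0\}$ (recall $u\equiv 1$ there by the definition of the class $\mathcal D_0^\mu$, except on the $k$-dimensional set $G$ where $u=\psi_u(y)\ge 0$; since $w$ is small in the $y$-directions this still goes through, possibly after shrinking $c$); choosing $C$ large makes $\det D^2 w\ge (x_n+\text{l.o.t.})^\alpha \ge \det D^2 u$ in $\Om$. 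By the maximum principle $u\ge w$, so passing to the tangent cone and restricting to $x_n=1$ gives $\gamma_u(z)\ge c_0\, p_z\cdot z - C_0$; optimizing over $p_z$ yields $\gamma_u(z)\ge c_0|z| - C_0$.

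For part b), after a rotation in the $z$-variables I may assume $p_z=e_{k+1}$ and write $z_0\cdot e_{k+1}=q$. The hypothesis $\gamma_u\ge C_0\,p_z\cdot(z-z_0)$ together with $u(x)\ge x_n\gamma_u(z/x_n)$ gives $u\ge C_0(x_{k+1}-q x_n)^+$ in $\Om$. On the region $O:=\Om\cap\{x_{k+1}-q x_n>-1\}$ I would compare $u$ with
\[
w:=\frac{C_0}{2}(x_{k+1}-q x_n)+\frac{C_0}{8}(x_{k+1}-q x_n)^2+\delta\sum_{i\ne k+1}x_i^2+\delta x_n,
\]
with $\delta$ small depending only on $\mu$ (the sum $\sum_{i\ne k+1}x_i^2$ runs over all remaining coordinates, including the $y$-coordinates, which is harmless since $w$ is then still dominated by $u$ on the part of $\p\Om$ lying in $G$). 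As in Lemma \ref{l7}: if $C_0$ is large then $\det D^2 w\ge \det D^2 u$; on $\p O\setminus\p\Om$ we have $x_{k+1}-q x_n=-1$ so $w\le 0\le u$; on $\p O\cap\p\Om$ we have $1\ge u\ge C_0(x_{k+1}-q x_n)$ so $w\le 1\le u$ (again using $u\equiv1$ off $G$ and adjusting on $G$ using that $w$ is $O(\delta)$ in the $y$-directions). Since $w(0)=0$ we get $w\le u$ on all of $\p O$, hence $u\ge w$ in $O$, and passing to the tangent cone at $0$ and restricting to $x_n=1$ yields $\gamma_u\ge \frac{C_0}{2}(x_{k+1}-q)+\delta$, which is the claim with $c_0=\delta$.

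The only real subtlety, and the step I expect to need the most care, is checking the boundary comparison on the part $G\subset\p\Om$ where $u$ equals the convex function $\psi_u(y)$ rather than the constant $1$. There $\psi_u$ can vanish (at the minimum of $\psi_u$, which after normalization sits at the origin), so one must be sure that the barriers $w$ — which involve terms $c|y|^2$ or $\delta|y|^2$ with $c,\delta$ small — still satisfy $w\le u=\psi_u(y)$ on $G$. This is exactly the point where I would invoke the lower bound $D^2_y\psi_u\ge \mu\,\mathcal N_k$ with $\mathcal N_k=\mathrm{diag}(a_1^2,\dots,a_k^2)$ and $a_i\ge 1$, so $\psi_u(y)\ge \tfrac{\mu}{2}|y|^2$ near its minimum; choosing $c,\delta\le \mu/2$ then makes the $y$-quadratic part of the barrier absorbed by $\psi_u$. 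Everything else is a verbatim transcription of the argument for Lemma \ref{l7} with $x'$ replaced by $z$, using only that $\Gamma_u$ ignores the $y$-variable and that $\det D^2 u = x_n^\alpha$ is controlled from below near $\{x_n=1\}$.
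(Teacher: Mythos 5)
Your proposal is correct and follows essentially the same barrier argument as the paper. Part~b) is verbatim the paper's proof: the same barrier $\frac{C_0}{2}(z_1-qx_n)+\frac{C_0}{8}(z_1-qx_n)^2+\delta\sum_{i\neq k+1}x_i^2+\delta x_n$, the same three-way boundary check, and the same appeal to the quadratic lower bound for $\psi_u$ on $G$. In part~a) there is one small deviation: the paper places the actual obstacle $\psi_u(y)$ in the barrier, so that $w=u$ identically on $G$, whereas you substitute the smaller term $c|y|^2$ and invoke $D^2_y\psi_u\ge\mu\,\mathcal N_k$ to conclude $w\le u$ there. Both choices work; yours has the minor advantage that $w$ is then automatically bounded by a universal constant throughout $B^+_{1/\mu}$, so the inequality $w\le 1$ on $\p\Om\setminus G$ is immediate, without needing any control on the size of $\psi_u$ off $G$. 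One tiny bookkeeping slip: the definition of $\mathcal D_0^\mu$ gives $a_i\ge\mu$, not $a_i\ge 1$, so the quadratic lower bound should read $\psi_u(y)\ge\frac{\mu}{2}\min_i a_i^2\,|y|^2\ge\frac{\mu^3}{2}|y|^2$; you should therefore take $c,\delta\le\mu^3/2$ rather than $\mu/2$. This does not affect the argument.
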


\begin{proof}
a) We compare $u$ with
$$w:=c z \cdot p_z + c|z|^2 + \psi_u(y) + C (x_n^2 -\mu^{-1} x_n),$$
with $p_z$ a unit vector, and $\psi_u$ denoting the boundary data of $u$ on $\p \Om \cap\{(y,0,0) \}$.

Notice that $w = u$ on the intersection of $\p \Om$ with the $y$ axis. We choose $c$ small such that $w \le 1$ in $\Om \subset B^+_{1/\mu}$ and $C$ large such that $\det D^2 w \ge \det D^2u$. By maximum principle, $u \ge w$, hence
$$\gamma_u \ge \gamma_w=c z \cdot p_z -C x_n,$$
which proves part a).

b) Assume that $p_z$ points in the $z_1$ direction and let $z_0 \cdot p_z=q$. Then
$$u \ge x_n\gamma_u(\frac{z'}{x_n}) \ge C_0(z_1-q x_n)^+.$$
In the set $O=\Omega \cap \{z_1-q x_n >-1\}$ we compare $u$ with
$$w:= \frac {C_0}{2} (z_1-qx_n) + \frac{C_0}{8} (z_1-q x_n)^2 + \delta (|x|^2-z_1^2) + \delta x_n,$$
where $\delta$ is small, fixed, depending on $\mu$.

Notice that if $C_0$ is sufficiently large we have $\det D^2 w \ge \det D^2 u$ and $u \le w$ on $\p O$. Indeed,
on $\p O \setminus \p \Om$ we have $$z_1-q x_n=-1 \quad \Rightarrow \quad w \le 0 \le u,$$
and on $\p O \cap \p \Omega$
$$1 \ge u \ge C_0(z_1 - q x_n) \quad \Rightarrow \quad w \le 1.$$
From the inequalities above and $$w(y,0,0) \le \delta |y|^2 \le u(y,0,0)$$ we obtain $w \le u$ on $\p O$.
 In conclusion $$\gamma_u \ge \gamma_w \ge \frac{C_0}{2}(z_1-q) + \delta.$$

\end{proof}

\begin{rem}\label{r6}
In part a) we showed that
\begin{equation}\label{59.6}
u(x) \ge \psi_u(y) + c|z|- Cx_n.
\end{equation}
Also we only need the weaker assumption $$u \ge C_0(z_1-q x_n) \quad \mbox{ on $\p \Omega$,}$$
in order to obtain the conclusion of part b).
\end{rem}

Part a) shows that the point $z_o \in \R^{n-k-1}$ where $\gamma_u$ achieves its infimum belongs to $B_C^z$. As a consequence of Lemma \ref{l9} we obtain as before the following inclusions
$$B_{c_*}^z \subset S_1(\gamma_u) - z_o \subset B_{C_*}^z$$
$$S_{c_*}(\gamma_u)-z_o \subset (1-c_*) (S_1(\gamma_u)-z_o).$$
for universal constants $c_*$ small, $C_*$ large.

Next we write these inclusions for the rescalings $u_h$ of $u$ defined in Lemma \ref{l6} (see Remark \ref{r4}). Recall from Lemma \ref{l5} that for any $h \in (0,1]$, we have
$$u_h(x)=\frac 1 h \tilde u(D_h x), \quad \quad \mbox{with} \quad \tilde u(x)=u(T_h A_h x).$$
Notice that $\gamma_u$ is just a translation of $\gamma_{\tilde u}$.
Since
$$\Gamma_{u_h}(x)=\frac 1 h \Gamma_{\tilde u}(D_h x)$$
we divide by $x_n$ and obtain as before
$$\gamma_{u_h}(z)=\frac{d_n}{h} \, \, \gamma_{\tilde u}\, (d_n^{-1} \, D_h^z z) \quad \quad \mbox{with} \quad D_h^z=diag(d_{k+1},..,d_{n-1}).$$

This implies that $$d_n^{-1}D_h^z\, \,  S_s(\gamma_{u_h})= S_{s \, h/d_n}(\gamma_{\tilde u}).$$

We obtain the following inclusions for the sections of $\gamma_u$,
\begin{equation}\label{59.7}
d_n^{-1} D_h^z \, B_{c_*}^z \quad \subset \quad S_t(\gamma_u) -z_o \quad  \subset \quad d_n^{-1} D_h^z \, B_{C_*}^z, \quad  \quad \quad t=\frac{h}{d_n},
\end{equation}
and
\begin{equation*}
S_{c_* t}(\gamma_u)-z_o \quad \subset \quad (1-c_*)\left(S_t(\gamma_u) -z_o   \right).
\end{equation*}

From Lemma \ref{l5} we know that as $h$ ranges from 1 to 0 the parameter $t=h/d_n$ covers an interval $[0,c]$. The inclusions above show that the sections $S_t(\gamma_u)$ are balanced around the minimum point $x_o$, and
 \begin{equation}\label{510}
\gamma_u(z) \ge c |z-z_o|^M \quad \quad \mbox{in $B_c^z(z_o)$,}
\end{equation}
for some $M$ large universal.
We also recall from Lemma \ref{l5} that, from the construction of $D_h$,
\begin{equation}\label{511}
 \mathcal S_h'(u) \, \, \subset \, \,\R^k \times h^{-1/2} D_h^z \, B_{C(n)}^z.
 \end{equation}

It remains to show that $\gamma_u$ grows quadratically near its minimum point.

\begin{lem}\label{l10}
There exists universal constants $c_1$, $C_1$ such that
$$c_1|z-z_o|^2 \le \gamma_u(x') \le C_1 |z-z_o|^2 \quad \quad \mbox{in $B_{c_1}^z(z_o)$}.$$
\end{lem}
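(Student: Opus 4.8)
The plan is to prove Lemma \ref{l10} by following the proof of Lemma \ref{l8} essentially verbatim, with the $x'$-variable there replaced by the $z$-variable here. After a sliding along $x_n=0$ of bounded norm we may assume $z_o=0$, and then the goal is the two-sided bound $c_1 I \le D^2_z\gamma_u \le C_1 I$ near $0$, obtained from an upper bound via the Pogorelov estimate Theorem \ref{Po} and a lower bound via Theorem \ref{abo} applied to the Legendre transform of $u$. The feature that makes this reduction to the lower-dimensional situation work is that every supporting plane of $u$ at $0$ is flat in the $y$-directions, so $\Gamma_u$ and hence $\gamma_u$ depend only on $(z,x_n)$; in particular differentiating $\log\det D^2 u=\alpha\log x_n$ in a $z_i$-direction annihilates the right-hand side, which is exactly what Theorems \ref{Po} and \ref{abo} require.

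For the upper bound (Step 1) I would first bound $|\nabla u|$ on $\{u<c\,x_n\}$ for $c$ small. As in the $k=0$ case, the projection of $\Omega=S_1(u)$ along $e_n$ onto the $z$-subspace must contain a fixed ball: otherwise $\Omega\subset\{z_1\le 1/C_0\}$, so $u\ge C_0 z_1$ on $\partial\Omega$, and Lemma \ref{l9}b in the weak form of Remark \ref{r6} forces $\gamma_u\ge\tfrac{C_0}{2}z_1+\delta$, contradicting $\gamma_u(0)=0$. Together with the interior ball of $\Omega$ and $0\in\partial\Omega$ this gives $B_r(\varrho e_n)\subset\Omega$ for universal $r,\varrho$, whence $u\ge 0$, $u(0)=0$ bound $|\nabla u|$ on the convex hull of $0$ and $B_{r/2}(\varrho e_n)$; by \eqref{510}, the bound $u\ge c\,x_n^{1/\delta}$ coming from $d_n\le Ch^\delta$ in Lemma \ref{l5}, and the boundedness of $\psi_u$ on $G_y$, this hull contains $\{u<c\,x_n\}$. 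Setting $w:=u-c\,x_n$ and rescaling $w_\lambda(x):=\lambda^{-1}w(\lambda x)$, each $w_\lambda$ solves $\det D^2 w_\lambda=\lambda^{n+\alpha}x_n^\alpha$ with the same gradient bound on $\{w_\lambda<0\}$ and converges on $\{x_n=1\}$ to $c-\gamma_u$ on $\{\gamma_u<c\}$; since $w_\lambda$ is $y$-independent and the right-hand side is a function of $x_n$ alone, Theorem \ref{Po} (with $x_1$ replaced by any $z_i$) gives $|w_\lambda|\,\partial_{z_iz_i}w_\lambda\le C$ uniformly, hence $(c-\gamma_u)\,\partial_{z_iz_i}\gamma_u\le C$ near $0$, i.e. $D^2_z\gamma_u\le C\,I$. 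Along the way I would also record, as in the derivation of \eqref{56}--\eqref{57}, that $[0,\varrho d_n e_n]\subset S_h(u)$ and hence $c\,t^{1/\delta}\le u(ty,tz,t)-t\,\gamma_u(z)\le C\,t^{4/3}$ for $(y,z)$ in a fixed small ball, using the bounds on $d_n$ and on $d_ih^{-1/2}$ from Lemma \ref{l5}.

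For the lower bound (Step 2) I would pass to $u^*(\xi):=\sup_{x\in\overline\Omega}(x\cdot\xi-u(x))$ and study it near $\xi=0$. The contact set $K:=\{u^*=0\}$ is where the supremum is attained at $0$; using $u(x)\ge\Gamma_u(x)$ together with the $y$-lower bound \eqref{59.6}, $K$ is, near $0$, trapped between $\{\xi_n\le -C|\xi_z|^{M/(M-1)}\}$ and $\{\xi_n\le -c|\xi_z|^2\}$. Strict convexity of $u$ in $\Omega$ gives $u^*\in C^1$ near $0$, and in $\{u^*>0\}$ the identity $\det D^2 u^*(\xi)=(\det D^2 u)^{-1}=x_n^{-\alpha}=(u^*_n)^{-\alpha}$ shows $(u^*_n)^\alpha\det D^2 u^*=1$. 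I would then apply Theorem \ref{abo} to $u^*$ on a fixed sub-level region $O=\{u^*<\eta(\xi_n+\eta)\}$ --- which by the Step 1 growth bound and $|\nabla u^*|\le 1$ sits in a fixed ball and has level sets of bounded Lipschitz norm --- to bound the second derivatives of the level sets of $u^*$ in the $z$-directions; for the $0$-level set this reads $D^2_z\gamma^*_u\le C\,I$, i.e. $D^2_z\gamma_u\ge c\,I$ near $0$. Since $u^*$ is only nondecreasing, not strictly increasing, in $e_n$, I would run this through the approximation of the excerpt: replace $u^*$ by $v_\eps=\max\{u^*,\ \eps(1+\xi_n+\tfrac12|\xi|^2)\}$, solve the regularized obstacle problem for $\bar v_\eps$ with right-hand side $f_\eps(\bar v_\eps)$, note $\bar v_\eps\ge v_\eps$ is strictly increasing in $e_n$ with uniformly Lipschitz level sets so Theorem \ref{abo} applies, and let $\eps\to 0$, checking $\bar v_\eps\to u^*$ by the comparison argument already in the excerpt. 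Combining Steps 1 and 2 proves the lemma, and together with \eqref{59.7} and \eqref{511} it yields Proposition \ref{p5} exactly as in the $k=0$ case.

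I expect the main obstacle to be the bookkeeping forced by the presence of the uniformly convex $y$-boundary data $\psi_u$: it is harmless for the differentiation step in both Pogorelov-type estimates (because $\Gamma_u$ is $y$-independent), but it must be tracked carefully when locating the region $\{u<c\,x_n\}$ in Step 1 and the contact set $K$ and sub-level region $O$ of $u^*$ in Step 2, and when verifying that the limiting objects still satisfy the hypotheses of Theorems \ref{Po} and \ref{abo}. The one genuinely non-routine point, just as for $k=0$, is the degeneracy of $u^*$ in the $e_n$ direction in Step 2, which is why the approximation scheme above is needed.
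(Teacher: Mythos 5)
Your overall plan matches the paper's proof of Lemma \ref{l8} with $x'$ replaced by $z$, but two details that you gloss over are precisely where the extra work of the $k\ge 1$ case lies, and as stated they are incorrect. In Step 1 the set $\{u<c\,x_n\}$ is \emph{not} contained in the convex hull of $\{0\}$ and $B_{r/2}(\varrho e_n)$: from \eqref{59.6}, \eqref{510} and \eqref{31} one only gets $\{u<\delta x_n\}\subset\{x_n\le c(\delta)\}\cap\{|z|\le c(\delta)x_n\}\cap\{\psi_u\le c(\delta)\}$, and the last constraint gives $|y|\le C\sqrt{x_n}$, which for small $x_n$ is far wider than any cone with vertex at the origin (whose $y$-cross-section at height $x_n$ has radius of order $x_n$). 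One must instead take the convex set generated by the whole boundary piece $G=\{(y,0,0):|y|\le c\}$ together with $B_{r/2}(\varrho e_n)$; the gradient bound on that set then comes from the fact that $u$ has a supporting plane of bounded slope at every point of $G$ (from $\nabla\psi_u$), not merely from $u\ge 0$, $u(0)=0$.

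In Step 2 the regularized right-hand side must be $f_\eps(\bar v_\eps-\psi_u^*)$, not $f_\eps(\bar v_\eps)$. With your choice $v_\eps$ fails to be a subsolution: in $\{v_\eps>u^*\}$ one has $v_\eps\approx\psi_u^*(\xi_y)+\eps$, which exceeds $\eps^{1/2}$ wherever $\psi_u^*(\xi_y)$ does, so $f_\eps(v_\eps)=1$ there while $(\partial_n v_\eps)^\alpha\det D^2v_\eps\approx\eps^{n+\alpha}\ll 1$. Then $\bar v_\eps\ge v_\eps$, the strict monotonicity of $\bar v_\eps$ in $\xi_n$, and the convergence $\bar v_\eps\to u^*$ all break down. (Relatedly, the equation $(\partial_n u^*)^\alpha\det D^2 u^*=1$ holds on $\{u^*>\psi_u^*\}$, not on $\{u^*>0\}$: on the coincidence set $\{u^*=\psi_u^*>0\}$ the gradient map lands on $\{x_n=0\}$ and $u^*$ is flat in the $(\xi_z,\xi_n)$ variables.) With the corrected $f_\eps(\bar v_\eps-\psi_u^*)$ the right-hand side depends only on $\xi_y$ and the value $s$, never on $\xi_z$, which is exactly what Theorem \ref{abo} needs to bound the $\xi_z$-curvatures. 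Finally, you assert without proof that the level sets of $u^*$ are uniformly Lipschitz in the $\xi_z$ direction, i.e.\ $|z|\le Cx_n$ when $x=\nabla u^*(\xi)$ and $\xi$ is small; this is not an automatic consequence of the growth bound and $|\nabla u^*|\le C$. It needs the paper's argument: lift the supporting plane of slope $\xi$ at $x$ until it first touches the boundary data at some $(y_0,0,0)$, observe $\xi_y=\nabla\psi_u(y_0)$, use convexity of $\psi_u$ to get $u(x)\le\psi_u(y)+\xi_z\cdot z+\xi_n x_n$, and combine with the lower bound $u(x)\ge\psi_u(y)+c|z|-Cx_n$ of \eqref{59.6}.
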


This lemma implies Proposition \ref{p5} as before. Indeed, the lemma gives $$t^{1/2}B_c^z \subset S_t(\gamma_u) -z_o \subset t^{1/2} B_C^z,$$
which together with \eqref{59.7} implies that
$$ c d^{1/2}_n \le d_i h^{-1/2} \le C d^{1/2}_n \quad \quad \mbox{for $k<i<n$.}$$
By Lemma \ref{l5} we also know
$$ c \le d_i h^{-\frac 12} \le C \quad \quad \mbox{for $i\le k$.}$$
Then
\begin{equation}\label{65.65}
\left(\prod_{i=1}^{n-1} d_i^2 \right) \, d_n^{2+\alpha}=h^n \quad \Rightarrow \quad ch \le {d_n}^{n+1-k+\alpha} \le Ch,
\end{equation}
and Proposition \ref{p5} follows from \eqref{511}.

\

Below we prove Lemma \ref{l10}. After performing a sliding along $x_n=0$ of bounded norm, we may assume that $z_o=0$.

\

{\it Step 1:} We use Theorem \ref{Po} in the set $\{ u < c x_n\}$ to obtain
\begin{equation}\label{512}
D^2\gamma_u \le C I \quad \quad \mbox{in} \quad \{\gamma_u <c \}.
\end{equation}

We first need to bound $|\nabla u|$ in the set $\{u<c x_n\}$ for some $c$ small. To this aim we observe that the orthogonal projection of $\Om=S_1(u)$ into the $z$-axis contains the ball $B_{1/C_0}^z$, with $C_0$ as in Lemma \ref{l9}. Otherwise we can find a direction, say $z_1$ such that
$$\Om \subset \{ z_1 \le 1/C_0 \},$$
hence $$u \ge C_0 z_1 \quad \mbox{on $\p \Om$},$$
and by Lemma \ref{l9} (see Remark \ref{r6}) we find $$\gamma_u \ge z_1 + c_0,$$ and we contradict that $\gamma_u(0)=0$ (since $z_o=0$.)

 Notice that $ \Omega \subset B_C^+$ contains a ball $B_\mu(x^*)$, the projection of $\Om$ into the $z$ coordinates contains $B_{1/C_0}^z$ and also $$G:=\{(y,0,0)| |y| \le c \} \subset \p \Om.$$ Since $\Om$ is convex, it must contain also $B_r(\varrho e_n)$ for some small fixed universal constants $\varrho$, $r$. Now we use that at each point in $G$ the function $u$ has a supporting plane of bounded slope, and conclude that $|\nabla u| \le C$ in the convex set generated by $G$ and $B_{r/2}(\varrho e_n)$. This convex set contains $\{ u < cx_n \}$ if $c$ is sufficiently small, since by \eqref{31}, \eqref{59.6} and \eqref{510} we obtain
$$\{ u< \delta x_n \} \subset \{x_n \le c(\delta)\} \cap \{|z| \le c(\delta) x_n \} \cap \{\psi_u \le c(\delta) \},$$
for some constant $c(\delta)\to 0$ as $\delta \to 0$.
 Now let $w:=u-cx_n$ and notice that the rescalings $w_\lambda$ defined in ...
 have uniform gradient bound in $\{w_\lambda <0\}$ and they converge uniformly on $x_n=1$ to $|\Gamma_u - c|$. Step 1 follows by applying Theorem \ref{Po} to $w_\lambda$ as before.

\

Above we showed also that the segment $[0,\varrho Z_o] \subset S_1(u)$ with $Z_o:=(0,z_o,1)=e_n$. We apply this for the rescaling $u_h$ and obtain
$$[0,\varrho d_n  \, Z_o] \subset S_h(u).$$ Using the bounds on $d_n$ from Lemma \ref{l5} we find (see also \eqref{31})
\begin{equation}\label{513}
 c t^{1/\delta} \le  u(0,tz_o ,t) \le C t^{4/3}.
 \end{equation}
We can extend this inequality at points $z$ near $z_o$,
\begin{equation}\label{514}
c t^{1/\delta} \le u(0,t z,t) - t \gamma_u(z) \le C t^{4/3} \quad \quad \mbox{for all $z \in B_c^z$.}
\end{equation}
Indeed, \eqref{514} follows by applying \eqref{513} to the function $u-l=u-p_z \cdot z-p_n x_n$ where $p_z \cdot z +p_n$ is the linear function tangent by below to $\gamma_u$ at some point $z_*$. From Step 1 we see that when $|z_*|$ is small, $|p_z|$, $|p_n|$ are also small and $u-l$ (renormalized at its $1/2$ section) belongs to a class $\mathcal D_0^c (1,..,1,\infty,..,\infty)$. Therefore we can apply \eqref{513} for $u-l$ and obtain the desired inequality \eqref{514} for $z_*$.

\

{\it Step 2:} In step 2 we apply Theorem \ref{abo} for the Legendre transform of $u$ and obtain
$$D^2 \gamma_u \ge c I \quad \quad \mbox{in $B_c^z$.}$$

 As before let $u^*$ denote the Legendre transform of $u$,
 $$u^*(\xi):= \sup_{x \in \ov \Om} \left( x \cdot \xi - u(x)\right).$$
 Writing $\xi=(\xi_y,\xi_z,\xi_n)$ we have
 $$u^*(\xi) \ge \sup_{(y,0,0) \in \ov \Om} \left( y \cdot \xi_y - u(y,0,0)\right) = \psi_u^*(\xi_y),$$
 where $\psi_u^*$ is the Legendre transform of the boundary data $\psi_u(y)$ of $u$ (on the $y$ subspace).
  If $|\xi|$ is small then the maximum in $u^*(\xi)$ is realized either in $\Omega$ or at some point $(y,0,0)$ with $|y|$ small and in the second case we have $u^*=\psi_u^*$.
  Since $u$ is strictly convex in $\Om$ we find that
  $$u^* \in C^1(B_c) \quad \mbox{and} \quad \quad (u^*_n)^\alpha \det D^2 u^* =1 \quad \quad \mbox{in $\{u^*>\psi_u^*\}$.}  $$

  In other words $u^*$ is the solution to an obstacle problem in which the obstacle $\psi_u^*$ is quadratic and depends only on the $\xi_y$ variable.

We define $K$ as the convex set
$$K:=\{ u^*=0\} \, = \, \{\xi_y=0  \} \cap \{ \xi_n \le - \gamma_u^*(\xi_z)\}$$
where $\gamma_u^*$ denotes the Legendre transform of $\gamma_u$. Below we bound the curvatures of the level sets of $u^*$ in the $z$ direction in a neighborhood of the origin. Then Step 2 follows by applying these bounds for the $0$ level set above.

From Step 1 and \eqref{510} we know that
$$c|z|^M \le \gamma_u(z) \le C |z|^2  \quad \mbox{in $B_c^z$}$$
hence
\begin{equation*}
\{ \xi_n \le -C |\xi_z|^\frac{M}{M-1} \}  \subset K \subset \{ \xi_n  \le - c |\xi_z|^2 \} \quad \mbox{in $\{\xi_y=0  \} \cap B_c$}.
\end{equation*}

Also from \eqref{514} and the definition of Legendre transform we find
$$ u^*(\xi) \ge c \left ( (\xi_n + \gamma_u^*(\xi_z))^+ \right )^4 \quad \quad \mbox{in $B_c$},$$
which together with
$$u^*(\xi) \ge \psi_u^*(\xi_y) \ge c |\xi_y|^2,$$
implies that
$$O:= \{ u^* < \eta (\xi_n + \eta) \} \subset B_{c_1},$$
with $\eta$ and $c_1$ sufficiently small universal constants.

We also claim that in $B_c$, $|\nabla u^*|$ and the Lipschitz norms in the $z$ direction of the level sets of $u^*$ (viewed as graphs in the $-e_n$ direction) are bounded by a universal constant.
Indeed, let $\xi \in B_c$ and let $\nabla u^*(\xi)=x=(y,z,x_n) \in \Omega$. We need to show that
$|z| \le C x_n$. We increase the tangent plane of $u$ at the point $x$ (which has slope $\xi$) till it touches the boundary data of $u$ for the first time at some point $(y_0,0,0)$. Clearly $\xi_y$ coincides with the derivative of $\psi_u$ at $y_0$. We have
\begin{align*}
u(x) & \le \psi_u(y_0) + \xi \cdot (x-(y_0,0,0)) \\
& \le \psi_u(y_0) + \xi_y \cdot (y-y_0) + \xi_z \cdot z + \xi_n x_n  \\
& \le \psi_u(y) + \xi_z \cdot z + \xi_n x_n,
\end{align*}
and by \eqref{59.6}
$$u(x) \ge \psi_u(y) + c|z| - Cx_n.$$
The inequalities above imply that $|z| \le C x_n $ if $|\xi|$ is sufficiently small.

Since $u^*$ is not strictly increasing in the $e_n$ direction, we apply Theorem \ref{abo} using approximations as before.
Formally, $u^*$ satisfies the hypotheses of Theorem \ref{abo} with right hand side $f(u^*-\psi_u^*)$ with $f=\chi_{(0,\infty)}$. The right hand side does not depend on the $z$ variable, thus we can bound the second derivatives of the level sets of $u^*$ in the $z$ direction.

\

{\it Approximation.} Define $$v_\eps= \max \left \{ u^*, \, \, \psi_u^*(\xi_y)+ \eps \left (1+ \xi_n + \frac 1 2 |\xi|^2\right ) \right \}$$
and we remark that in $B_{c_1}$, $v_\eps$ is strictly increasing in the $e_n$ direction, $|\nabla v_\eps| \le C$, and its level sets have Lipschitz norm in the $z$ direction bounded by a universal constant. In the set $\{v_\eps>u^*\} \cap B_{c_1}$ we have
$$(\p_n v_\eps)^ \alpha \det D^2 v_\eps \ge \eps^{n+\alpha},$$
hence
$$ (\p_n v_\eps)^ \alpha \det D^2 v_\eps \ge f_\eps(v_\eps-\psi_u^*)  \quad \mbox{in $B_{c_1}$,} $$
in viscosity sense, with
$f_\eps$ a nondecreasing function satisfying $$f(s)=\eps^{n+\alpha} \quad \mbox{if $s \le \eps^{1/2}$}, \quad \quad f(s)=1 \quad \mbox{if $s \ge 2 \eps^{1/2}$.} $$

 We define $\bar v_\eps$ as the viscosity solution to
 $$(\p_n \bar v_\eps)^\alpha \det D^2 \bar v_\eps =f_\eps(\bar v_\eps-\psi_u^*) \quad \quad \mbox{in}\quad O_\eps:= \{v_\eps< \eta(\xi_n + \eta)\},$$
 $$ \bar v_\eps=v_\eps \quad \mbox{on $\p O_\eps$.} $$

We apply Theorem \ref{abo} for $\bar v_\eps$ in $O_\eps$ and obtain the uniform second derivative bounds in the $z$ direction for its level sets around the origin. As before we find that $\bar v_\eps$ converges to $u^*$ as $\eps \to 0$. Thus the conclusion holds also for $u^*$, and the proof of Proposition \ref{p5} is finished.

\qed

\section{Proof of Theorem \ref{T2}}\label{s7}

Assume the hypotheses of Theorem \ref{T2} are satisfied. By Theorem \ref{T1}, we may also assume after performing an affine transformation that the solution $u$ satisfies
\begin{equation}\label{60}
c_0(|x'|^2 + x_n^{2+\alpha}) \le u(x) \le C_0 (|x'|^2 + x_n^{2+\alpha})
\end{equation}
for all $|x| \le c(\rho,\rho')$. Since in our case $\mu=1$, the constants $c_0$, $C_0$ depend only on $\alpha$ and $n$.

The rescalings $u_h$ for small $h$,
\begin{equation}\label{60.1}
u_h(x):=\frac 1 h u \left (h^\frac 12 x', h^\frac{1}{2+\alpha} x_n \right ),
\end{equation}
satisfy inequality \eqref{60} as well, and therefore belong to a compact family. Precisely, given a sequence $u^m$ of functions as above, and $h_m \to 0$, we can extract a subsequence $u_{h_m}^m$  that converges uniformly on compact sets to a global solution $u_0$ defined in $\R^n_+$, that satisfies \eqref{60} and
$$ (1-\eps_0) x_n^\alpha \le \det D^2 u_0 \le (1+\eps_0) x_n^\alpha,$$
$$ (1-\eps_0)\frac{|x'|^2}{2} \le u_0(x',0) \le (1+\eps_0) \frac{|x'|^2}{2}.$$

By compactness, the proof of Theorem \ref{T2} follows from the following Liouville type theorem.

\begin{prop}\label{p6}
Let $u \in C(\ov{\R^n_+})$ be a convex function that satisfies the growth condition
\begin{equation}\label{60.5}
c_0(|x'|^2 + x_n^{2+\alpha}) \le u(x) \le C_0 (|x'|^2 + x_n^\alpha),
\end{equation}
with $c_0$, $C_0$ the constants of Theorem \ref{T1} (see Remark \ref{r0}) and
\begin{equation}\label{61}
\det D^2 u = x_n^\alpha, \quad \quad  u(x',0) = \frac 12 \, |x'|^2.
\end{equation}
Then $$u=U_0:=\frac 12 |x'|^2 + \frac{x_n^{2+\alpha}}{(1+\alpha)(2+\alpha)}.$$
\end{prop}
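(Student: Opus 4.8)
The plan is to reduce everything to the claim that the tangential block of the Hessian is forced to be the identity, $D^2_{x'}u\equiv I$ in $\{x_n>0\}$, and to read off $u=U_0$ from this. Note first that $u\in C^\infty(\{x_n>0\})$ by interior regularity, since $x_n^\alpha$ is smooth and positive there. Granting $D^2_{x'}u\equiv I$, the block formula for $\det D^2u$ gives $u_{nn}-\sum_{j<n}u_{jn}^2=x_n^\alpha$; since $u_{jk}\equiv\delta_{jk}$ for $j,k<n$ we have $u_{jkl}\equiv 0$ for such indices, and differentiating the identity in each tangential variable shows that $u_{nn}$ and every $u_{jn}$ depend on $x_n$ alone. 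Integrating, and using $u(x',0)=\tfrac12|x'|^2$ and $u(0)=0$, one obtains $u(x)=\tfrac12|x'|^2+x_n(\tau\cdot x')+\Phi(x_n)$ with $\tau\cdot e_n=0$, $\Phi(0)=0$ and $\Phi''(x_n)=x_n^\alpha+|\tau|^2$. In particular $u(0,x_n)=\tfrac{x_n^{2+\alpha}}{(1+\alpha)(2+\alpha)}+\tfrac{|\tau|^2}2x_n^2+bx_n$ for some $b$, and matching this against the two-sided bound \eqref{60.5} as $x_n\to0$ forces first $b=0$ and then $\tau=0$; hence $u=U_0$. (In two dimensions this rigidity follows directly from a partial Legendre transform, which turns the equation into a linear one; the argument below is the higher dimensional substitute.)

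It remains to show that $w:=\partial^2_{\xi'}u\equiv1$ for every tangential unit vector $\xi'$ (taking $\xi'=e_i$ and $\xi'=(e_i+e_j)/\sqrt2$ this is equivalent to $D^2_{x'}u\equiv I$). Differentiating $\log\det D^2u=\alpha\log x_n$ once and then twice in the direction $\xi'$ — the right-hand side being independent of $x'$ — gives
$$
u^{ij}w_{ij}=u^{ik}u^{jl}\,u_{\xi'ij}\,u_{\xi'kl}\ \ge\ 0,\qquad [u^{ij}]:=[D^2u]^{-1},
$$
so $w$ is a nonnegative subsolution of the linearized operator $L:=u^{ij}\partial_{ij}$, which is uniformly elliptic on each slab $\{x_n>\delta\}$ and degenerates on $\{x_n=0\}$ in the benign way $u^{nn}\sim x_n^{-\alpha}$. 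Moreover $w$ takes the value $1$ on $\{x_n=0\}$: the second difference quotients $t^{-2}\big(u(\,\cdot+t\xi')+u(\,\cdot-t\xi')-2u\big)$ are continuous up to $\{x_n=0\}$, are identically $1$ there since $u(x',0)=\tfrac12|x'|^2$, and converge to $w$ on compact subsets of $\{x_n>0\}$.

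The two one-sided estimates needed to pin $w$ to $1$ come from the Pogorelov-type results of Section \ref{s5}, used as in the proof of Lemma \ref{l8}. Applying Theorem \ref{Po} to suitable normalizations of $u-c\,x_n$ (as in Step~1 of Lemma \ref{l8}) bounds $w$ from above; a Phragm\'en--Lindel\"of argument for the bounded subsolution $w$ of $L$ — using the boundary value $1$ on $\{x_n=0\}$, the ellipticity on $\{x_n>\delta\}$, the benign degeneracy, and \eqref{60.5} to control the behaviour at infinity — then gives $w\le1$, i.e. $D^2_{x'}u\le I$. For the reverse inequality one passes to the Legendre transform $u^*$, which is nonnegative, vanishes exactly on the ray $\{\xi'=0,\ \xi_n\le0\}$, and solves $(u^*_n)^\alpha\det D^2u^*=1$ elsewhere; applying Theorem \ref{abo} to $u^*$ (as in Step~2 of Lemma \ref{l8}) bounds the curvatures of the level sets of $u^*$ in the $\xi'$ directions, hence bounds $(D^2u^*)_{\xi'\xi'}$ from above, and via the Schur-complement inequality $(D^2_{x'}u)^{-1}\le(D^2u^*)_{\xi'\xi'}$ together with the boundary value this yields the matching lower bound $D^2_{x'}u\ge I$. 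Combining, $D^2_{x'}u\equiv I$, and the first paragraph finishes the proof.

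I expect the lower bound $w\ge1$ to be the main obstacle. As noted in the introduction, tangential pure second derivatives are only subsolutions of the linearized operator, so no lower bound is automatic; it has to be manufactured out of the fact that the right-hand side is exactly $x_n^\alpha$ and the boundary datum exactly $\tfrac12|x'|^2$, which is precisely where Theorem \ref{abo} and the analysis of the Legendre transform (in the spirit of \cite{S2}) enter. A secondary technical point is making the maximum principle / Phragm\'en--Lindel\"of comparison rigorous across the degenerate face $\{x_n=0\}$; here the benign degeneracy $u^{nn}\sim x_n^{-\alpha}$ and the a priori Hessian bounds are what let the comparison close.
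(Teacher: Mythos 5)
Your first paragraph (that $D^2_{x'}u\equiv I$ forces $u=U_0$ after matching the growth bound) is correct, and your route to the upper bound $D^2_{x'}u\le I$ --- using the subsolution property of $u_{\xi'\xi'}$, a Pogorelov bound for boundedness, and a comparison across the degenerate face --- is in the same spirit as the paper's Step 1, which instead runs a compactness/extremal-function argument. Both are plausible for the one-sided inequality where the maximum principle applies.

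The genuine gap is in the lower bound $D^2_{x'}u\ge I$, and you correctly flag it as ``the main obstacle'' but do not resolve it. Theorem~\ref{abo} applied to the Legendre transform $u^*$, following Step~2 of Lemma~\ref{l8}, produces a bound of the form $(D^2u^*)_{\xi'\xi'}\le C\,I$ with $C$ a \emph{universal} constant, coming from the quantity $v_{11}|w|\le C$ in Lemma~\ref{above}. Via the Schur complement this gives $D^2_{x'}u\ge c\,I$ for some universal $c>0$, which is exactly the conclusion of Lemma~\ref{l8} in the paper --- a non-sharp, qualitative quadratic growth estimate for $\gamma_u$. Nothing in the Pogorelov machinery forces the constant to equal $1$, and the phrase ``together with the boundary value'' does not supply a mechanism to upgrade $c$ to $1$: since $u_{\xi'\xi'}$ is only a subsolution of the linearized operator, a minimum principle cannot propagate the boundary value $1$ downward, and $u^*_{\xi'\xi'}$ is not a sub- or supersolution of any obvious linear operator either.

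The paper's Proposition~\ref{p6} (Section~\ref{s7}) sidesteps this entirely by changing the object being estimated. Rather than trying to pin $D^2_{x'}u$ from below, the crucial observation (Step~3) is that for any solution of $\det D^2 v=x_n^\alpha$, the quantity $v_n/x_n^{1+\alpha}$ is a \emph{solution} --- not merely a sub- or supersolution --- of a linear uniformly elliptic equation. This lets the strong maximum principle act in both directions. Combined with Step~1 (the upper bound $D^2_{x'}u\le I$, which in particular controls $\nabla u_n$ on the boundary via Step~2) and a careful normalization/compactness argument over all boundary points (Step~4), one gets the sharp one-sided estimate $u_n/x_n^{1+\alpha}\le\frac{1}{1+\alpha}$, i.e. $u\le U_0$ after integrating in $x_n$. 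Step~5 then concludes $u=U_0$ with an explicit barrier and the strong maximum principle; the lower Hessian bound $D^2_{x'}u\ge I$ is never proved directly. Your proposal is missing precisely this structural observation about $u_n/x_n^{1+\alpha}$, which is what converts the non-sharp Pogorelov information into the rigidity statement.
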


In the case $\alpha=0$ the conclusion is slightly different and $u$ must be a quadratic polynomial. The proof follows from the Pogorelov estimate in half space (see \cite{S1}). When $\alpha>0$ the situation is more delicate and we will make use of Theorem \ref{T1}. 

We define $\mathcal K$ as the set of functions $u \in C(\ov{\R^n_+})$ that satisfy \eqref{60.5}, \eqref{61}. We want to show that $\mathcal K$ consists only of $U_0$.

Clearly $\mathcal K$ is a compact family under uniform convergence on compact sets. Also for any $h>0$,
$$ u \in \mathcal K \quad \Rightarrow \quad u_h(x):= \frac 1 h u \left (h^\frac 12 x', h^\frac{1}{2+\alpha} x_n \right ) \in \mathcal K.$$

If $u \in \mathcal K$ and $$x_0 \in \{x_n=0\}$$ is a point on the boundary then, after subtracting its tangent plane at $x_0$ and performing an appropriate sliding $A_{x_0}$, we can normalize $u$ at $x_0$ such that it belongs to $\mathcal K$. Precisely, there exists $A_{x_0}$ sliding along $x_n=0$ such that $u_{x_0} \in \mathcal K$ where
$$u_{x_0}(x):= u(x_0+A_{x_0}x)- u(x_0)-\nabla u(x_0) \cdot A_{x_0}x.$$

This statement follows from Theorem \ref{T1}. If $x_0$ is sufficiently close to the origin then the tangent plane of $u$ at $x_0$ has bounded slope. Indeed, the upper bound for $u_n(x_0)$ is obtained from \eqref{60.5} by convexity while for the lower bound we compare $u$ in $S_1(u)$ with an explicit barrier of the type
$$-\frac 12 x_0^2 + x' \cdot x_0 + \frac c2 |x'-x_0|^2 + c^{1-n} (x_n^2-Mx_n),$$
with $c$ small and $M$ large appropriate constants.

We can apply Theorem \ref{T1} at the point $x_0$ in the section $S_1(x_0)$ of $u$ and find that $u_{x_0}$ defined above satisfies \eqref{60.5} in a fixed neighborhood around the origin. In the general case we apply this argument for $u_h$ with $h \to \infty$ and obtain that $u_{x_0}$ satisfies \eqref{60.5} in whole $\R^n_+$.

Below we provide the proof of Proposition \ref{p6} in several steps. The main ingredients are the compactness of the class $\mathcal K$ under the rescalings and normalizations given above, the fact that for $i<n$, $u_{ii}$ are subsolutions for the linearized operator and also that $$\frac{u_n}{x_n^{1+\alpha}}$$
solves an elliptic equation.

\

{\it Step 1:} We show that if $u \in \mathcal K$ then $D^2_{x'} u \le I$.

Given any $y_0 \in \R^n_+$ we consider the section $S_h(y_0)$ of $u$ that becomes tangent to $x_n=0$ at some point $x_0$. After normalizing $u$ at $x_0$ and then after an appropriate rescaling, we may assume that $S_h(y_0)=\{ u < x_n \}$. Notice that the tangential second derivatives $D^2_{x'}u$ are left invariant by these transformations. Hence, by interior regularity, $ u_{ii} \le C $ for $i<n$. Assume we have a sequence of functions $u_m \in \mathcal K$ and points $y_m$ (normalized as above) for which $\p_{ii} u_m(y_m)$ tends to the supremum value $ \sup_{u \in K}\p_{ii} u$. Then we may assume that $u_m \to \bar u \in \mathcal K$, and $\p_{ii} \bar u$ achieves an interior maximum at the point $\bar y=\min (\bar u-x_n)$. The function $\p_{ii} \bar u$ is a subsolution for the linearized operator, thus $\p_{ii} \bar u$ is constant in $\R^n$. The boundary data of $\bar u$ on $x_n=0$ shows that this constant must be 1, and this proves Step 1.

\

{\it Step 2:} We show that if $u \in \mathcal K$ then $$\psi(x'):=u_n(x',0) \quad \mbox{is concave,} \quad \psi \le 0 \quad \mbox{and}\quad \|\nabla \psi \|_{C^\frac{\alpha}{2+\alpha}(\R^{n-1})} \le C.$$

Formally, by Step 1 we have $u_{iin}\le 0$ on $x_n=0$ hence $\psi =u_n$ is concave. We prove this rigourously below. Let $x_0=(x_0', h^\frac{1}{2+\alpha})$ be the point where the section $S_t$ at the origin (for some $t$) becomes tangent to $x_n=h^\frac{1}{2+\alpha}$. From \eqref{60.5} we have
$$ch \le t \le Ch, \quad \quad |x_0'| \le C t^\frac 12 \le C h^ \frac 12.$$
We use Step 1 and $\nabla_{x'}u(x_0)=0$ and obtain
$$u(x',h^\frac {1}{2+\alpha}) \le C h + \frac 12 |x'-x_0'|^2 \le Ch + C |x'| h ^\frac 12 + u(x',0).$$
We let $h \to 0$ and obtain $u_n(x',0) \le 0$.

If $|x'| \le 1$ then as above, we use an explicit barrier for $u$ and easily obtain also a lower bound $0 \ge u_n(x',0) \ge -C$. We apply this for
the rescaling $u_h$ (see \eqref{60.1}) and find
$$ 0 \ge \p_n u_h(x',0)=h^{\frac{1}{2+\alpha}-1} u_n(h^\frac 12 x',0) \ge -C \quad \quad \mbox{if $|x'| \le 1$,}$$
hence
$$0 \ge u_n(x',0) \ge - C |x'|^{1+\frac{\alpha}{2+\alpha}} \quad \quad \mbox{for all $x'$.}$$

We apply this last inequality for $u_{x_0}$, the normalization of $u$ at $x_0\in \{x_n=0\}$,
$$u_{x_0}(x)=u(x_0+A_{x_0}x)-u(x_0)-\nabla u(x_0) \cdot A_{x_0}x,$$
with $$A_{x_0}x=x-\tau_{x_0} x_n, \quad \quad \tau_{x_0} \cdot e_n=0.$$
We find
$$0 \ge \p_n u_{x_0} \, \, (x',0)=u_n(x_0+x',0)-u_n(x_0)-x' \cdot \tau_{x_0} \ge - C |x'|^{1+\frac{\alpha}{2+\alpha}},$$
where in the equality above we made use of $\nabla_{x'}u(x',0)=x'$. This proves Step 2 and we remark that the inequality above shows that the components of the vector $\tau_{x_0}$ in the sliding $A_{x_0}$ are given by $u_{ni}(x_0)$, $i<n$.

\

{\it Step 3.} We show that if $v$ is a convex function in $\R^n_+$ that satisfies
$\det D^2v=x_n^\alpha,$
then
$$\bar w:=v_n/ x_n^{1+\alpha}$$
satisfies in the set $\{ \bar w>0 \}$ a linear elliptic equation of the type
$$ L \bar w:=a^{ij}(x) \bar w_{ij} + b^i(x) \bar w_i =0, \quad \quad \mbox{with} \quad (a_{ij}(x))_{ i,j} >0.$$

It suffices to show that $$w=\log \bar w = \log v - (1+\alpha) \log x_n,$$
satisfies a linear elliptic equation as above. We have
\begin{align*}
w_i&=\frac{v_{ni}}{v_n}-\frac{1+\alpha}{x_n}\delta_n^i  \\
w_{ij}&= \frac{v_{nij}}{v_n}-\frac{v_{ni}v_{nj}}{v_n^2}+\frac{1+\alpha}{x_n^2}\delta_n^i \delta_n^j.
\end{align*}
Differentiating the equation $\log \det D^2 v=\alpha \log x_n$ along $x_n$ direction
$$v^{ij}v_{nij}=\frac{\alpha}{x_n},$$
hence
\begin{equation}\label{62}
v^{ij} w_{ij} =  \frac{\alpha}{x_n v_n}-\frac{v_{nn}}{v_n^2}+\frac{1+\alpha}{x_n^2}v^{nn}= -\frac{w_n}{v_n}-\frac{1}{x_nv_n}+\frac{1+\alpha}{x_n^2}v^{nn}.
\end{equation}

We have $$v^{nn}v_{nn}=1-\sum_{i \ne n} v_{ni}v^{in}=1-\sum_{i \ne n} w_i v_n v^{in},$$
hence
$$v^{nn}=\frac{1}{v_{nn}}- g^i w_i,$$
for some functions $g^i$. Then
\begin{align*}
\frac{1+\alpha}{x_n}v^{nn}- \frac{1}{v_n}&=\frac{1+\alpha}{x_nv_{nn}}-\frac {1}{v_n}-\tilde g^i w_i\\
&=-\frac{1}{v_{nn}} \left( \frac{v_{nn}}{v_n}-\frac{1+\alpha}{x_n} \right)-\tilde g^i w_i\\
&=-\frac{1}{v_{nn}}w_n-\tilde g^i w_i,
\end{align*}
which together with \eqref{62} proves step 3.

\

As a consequence we obtain that if $v \in C(\ov \R^n_+)$ is a convex function that satisfies
\begin{equation}\label{63}
\det D^2 v=x_n^\alpha, \quad \quad v(x',0)=\frac 12 |x'|^2,
\end{equation}
and $\bar w=v_n/x_n^{1+\alpha}$ achieves a positive maximum at an interior point then $\bar w$ is constant. This implies that $v=U_0$ with $U_0$ as in Proposition \ref{p6} and therefore $\bar w \equiv \frac{1}{1+\alpha}$.

Assume that $v$ satisfies \eqref{63}, and for some direction $\xi=(\xi',1)$ and constant $m$, the function
$$\tilde w = \frac{v_\xi+m}{x_n^{1+\alpha}} \quad \mbox{has a positive interior maximum.}$$
Then $\tilde w \equiv \frac{1}{1+\alpha}$. Indeed, the function
$$\tilde v(y):=v(y'+\xi'y_n,y_n)+m y_n,$$
satisfies \eqref{63} and the conclusion follows as above since
$$\frac{\tilde v_n(y)}{y_n^{1+\alpha}}=\frac{v_\xi(x)+m}{x_n^{1+\alpha}}, \quad \quad \quad x:=(y'+\xi' y_n,y_n).$$

\

{\it Step 4.} We use the result above and show that $$u \in \mathcal K \quad \quad \Rightarrow \quad \frac{u_n}{x_n^{1+\alpha}} \le \frac{1}{1+\alpha}.$$

Let $x^*$ be a point in $\R^n_+$ where $u_n(x^*)>0$, and let $x_0$ be the point where the first section of $u$ at $x^*$ becomes tangent to $x_n=0$. As in Step 1 we normalize $u$ at $x_0$ and then rescale
$$v(y):=  \frac 1 h u_{x_0}(F_hy)  = \frac 1 h \left [u(x_0+A_{x_0}F_hy)-u(x_0)-\nabla u(x_0) \cdot A_{x_0}F_h y\right],$$
with
$$F_h y:=(h^\frac 12 y',h^\frac{1}{2+\alpha}y_n), \quad \quad A_{x_0}x=x - \tau_{x_0} x_n, \quad \quad \quad \tau_{x_0}=(\tau^1,...,\tau^{n-1},0).$$

We know that $v \in \mathcal K$ and we denote by $y^*$ the corresponding coordinates for $x^*$ in the $y$ coordinate,
$$x=x_0 + A_{x_0}F_h y \quad \quad x^*=x_0+A_{x_0}y^*.$$
We choose $h$ above such that $y^*$ is the center of the section $\{ v<y_n\}$, i.e. the point where $v-y_n$ achieves its minimum. We have
$$h \nabla v=\nabla u \, A_{x_0}F_h - \nabla u(x_0) \, A_{x_0} F_h,$$
and we obtain
\begin{equation}\label{64}
u_n=u_n(x_0) + h^\frac{1+\alpha}{2+\alpha}v_n + h^\frac 12 \tau^i v_i
\end{equation}
where $u_n$ is evaluated at $x$ and $v_n$, $v_i$ are evaluated at $y$.

 Since $u_n(x^*)>0$ and $|\nabla v(y^*)| \le C_1$ for some constant $C_1$ depending only on $\alpha$ and $n$ we find
\begin{equation}\label{65}
0 \le u_n(x_0) + C_1 \left (h^\frac{1+\alpha}{2+\alpha} + h^\frac 12 |\tau_{x_0}| \right).
\end{equation}
 On the other hand by Step 2 we know that $u_n \le 0$ on $x_n=0$. Thus if we write \eqref{64} at $$y=(y',0), \quad \mbox{with} \quad y'=2C_1 \frac{\tau_{x_0}}{|\tau_{x_0}|},$$
and use $\nabla_{y'}v(y)=y'$ together with \eqref{65} we obtain
$$ 0 \ge - C_1 \left (h^\frac{1+\alpha}{2+\alpha} + h^\frac 12 |\tau_{x_0}| \right) - C_2 h^\frac{1+\alpha}{2+\alpha} + 2 C_1 h^\frac 12 |\tau_{x_0}|$$
for some $C_2$ large depending on $C_1$. This and \eqref{65} show that
$$|\tau_{x_0}| h^\frac 12 \le C_3 h^\frac{1+\alpha}{2+\alpha}, \quad \quad u_n(x_0) \ge -C_3 h^\frac{1+\alpha}{2+\alpha},$$
for some $C_3$ depending only on $n$ and $\alpha$. We use these inequalities in \eqref{64} and obtain
\begin{equation}\label{66}
\frac{u_n(x)}{x_n^{1+\alpha}}=\frac{m+v_\xi(y)}{y_n^{1+\alpha}},
\end{equation}
for some vector $\xi$ and constant $m$ satisfying
$$\xi=(\xi',1), \quad \quad \quad |\xi'|\le C_3, \quad \mbox{and} \quad -C_3 \le m \le 0.$$

The right hand side of \eqref{66} is bounded by a universal constant at $y^*$ which implies that $u_n/x_n^{1+\alpha}$ is bounded at $x^*$. Since $x^*$ is arbitrary we obtain an upper bound for this function. Moreover, if we take a sequence of points which approach its supremum then the corresponding functions $v$ (and $m$, $\xi$) converge up to a subsequence to a limiting solution $\bar v \in \mathcal K$ (respectively $\bar m$, $\bar \xi$) for which
$$ \frac{ \bar m+\bar v_{\bar \xi}}{z_n^{1+\alpha}} \quad \mbox{achieves its maximum at the center of $\{\bar v <y_n \}$.}$$
By Step 3 we obtain that this maximum value is $1/(1+\alpha)$.

\

{\it Step 5.} We show that if $u \in \mathcal K$ then $u=U_0$.

Indeed, we integrate in the $x_n$ direction the inequality in Step 4 and obtain $u \le U_0$. Assume by contradiction that $u$ does not coincide with $U_0$ hence, by strong maximum principle, $u < U_0$ in $\R^n_+$. Let
$$V:= \frac{1+\eps}{2} |x'|^2 +\frac{(1+\eps)^{1-n}}{(2+\alpha)(1+\alpha)}x_n^{2+\alpha} - \eps x_n,$$ and notice that $\det D^2 V= \det D^2 u$, and
$$V \ge U_0 \ge u \quad \quad \mbox{ on} \quad  \{x_n=0\} \cup \left(\{|x'| = C_1\} \cap \{0 \le x_n \le 1\} \right)$$
and if $\eps$ is sufficiently small
$$V \ge U_0 - C_2 \eps \ge u  \quad \quad \mbox{on} \quad \{|x'| \le C_1\} \cap \{x_n = 1\},$$
where $C_1$, $C_2$ are constants depending on $\alpha$ and $n$. By maximum principle $$V \ge u \quad \mbox{in} \quad B_{C_1}' \times [0,1]$$
and we contradict $\nabla u(0)=0$ which follows from \eqref{60.5}.

\qed

\section{Consequences of Theorem \ref{T2}}\label{s8}

In this section we use Theorem \ref{T2} and prove Theorems \ref{T2.1}, \ref{T03}, \ref{T02}.
First we show that if the hypotheses of Theorem \ref{T1} or Theorem \ref{T2} are satisfied at a point then they hold also in a neighborhood of that point.

\begin{lem}\label{l11}
Assume the hypotheses H1, H2, H3, H4 of the localization Theorem \ref{T1} are satisfied and, in addition, $\p \Om$ admits an interior tangent ball of radius $\rho$ at all points on $\p \Om \cap B_\rho$ and
$$u(x)=\ph(x') \quad \mbox{on} \quad \p \Om \cap B_\rho, \quad \quad \mu^{-1} I \ge D_{x'}^2 \ph \ge \mu I.$$
Then the hypotheses of the localization theorem hold at all points $x_0 \in \p \Om \cap B_c$, for some $c=c(\rho,\rho')$ small.
\end{lem}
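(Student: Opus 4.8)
The plan is to fix $x_0\in\p\Om\cap B_c$, with $c=c(\rho,\rho')$ small to be chosen, and to verify that after translating $x_0$ to the origin, rotating so that the inner normal $\nu_{x_0}$ of $\p\Om$ at $x_0$ becomes $e_n$, and subtracting a supporting plane, the resulting data satisfies H1--H4. Concretely, choose $p_0\in\p u(x_0)$ and a rotation $R$ with $Re_n=\nu_{x_0}$, and set
\[
\bar u(x):=u(x_0+Rx)-u(x_0)-p_0\cdot Rx,\qquad \bar\Om:=R^{-1}(\Om-x_0).
\]
Two facts will be used repeatedly. First, since $\p\Om$ has an interior tangent ball of radius $\rho$ at every point of $\p\Om\cap B_\rho$ and $\Om$ is convex, $\p\Om$ is $C^{1,1}$ near $0$ with unit normal Lipschitz of constant $\le C/\rho$; hence $\nu_{x_0}$ is well defined, the tangent hyperplane to $\Om$ at $x_0$ supports $\Om$, $|R-I|\le C|x_0|/\rho\le Cc/\rho$, and near $0$ the set $\p\bar\Om$ is a convex graph $x_n=\bar h(x')$ with $\bar h(0)=0$, $\nabla\bar h(0)=0$, $0\le\bar h(x')\le|x'|^2/\rho$ and $0\le D^2_{x'}\bar h\le(C/\rho)I$. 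Second, using the already established localization Theorem \ref{T1} at the origin (which gives $c(\rho,\rho')(|\tilde x'|^2+\tilde x_n^{2+\alpha})\le u(x)\le C(\rho,\rho')(|\tilde x'|^2+\tilde x_n^{2+\alpha})$ near $0$, where $\tilde x=A_0^{-1}x$ with $A_0$ a sliding of norm $\le C(\rho,\rho')$) together with comparison against an explicit barrier of the type of Lemma \ref{l1}, one obtains the gradient bound $|p_0|=|\nabla u(x_0)|\le C(\rho,\rho')|x_0|\le C(\rho,\rho')c$: the tangential part of $p_0$ is governed by the boundary data $\ph$, its normal part is bounded above via the interior tangent ball at $x_0$ and the upper bound for $u$, and bounded below via the barrier and the bound $\det D^2u\le 1/\rho'$.

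\emph{H1 and H4 for $\bar u$.} The set $\bar\Om$ is convex, contains the image of the interior tangent ball at $x_0$ (a ball of radius $\rho$ at $0$), lies in $\{x_n>0\}$ because the supporting hyperplane at $x_0$ has inner normal $\nu_{x_0}$, and is contained in a ball of radius $2/\rho$; this is H1 after relabeling $\rho$. Since rigid motions preserve distances and the map $x\mapsto x_0+Rx$ followed by subtraction of an affine function leaves $\det D^2$ unchanged, $\det D^2\bar u(x)=g(x_0+Rx)\,d_{\p\bar\Om}(x)^\alpha$; for $x\in B_{\rho/2}$ the point $x_0+Rx$ lies in $B_\rho$ once $c<\rho/2$, so H4 (with radius $\rho/2$ and the same $\eps_0$, $\rho'$) is inherited from H4 for $u$.

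\emph{H3 for $\bar u$} is the heart of the matter. Near $0$, a point of $\p\bar\Om$ is $x=(x',\bar h(x'))$, and the corresponding point of $\p\Om$ is $X:=x_0+R(x',\bar h(x'))$; for $|x'|$ and $c$ small, $X\in\p\Om\cap B_\rho$, where $u(X)=\ph(X')$. Writing $R_{11}\in\R^{(n-1)\times(n-1)}$, $R_{12}\in\R^{n-1}$ for the blocks of $R$ between the $x'$-space and the $x_n$-axis, and $R^Tp_0=(\tilde p',\tilde p_n)$, the restriction of $\bar u$ to this graph is
\[
\bar\ph(x'):=\bar u(x',\bar h(x'))=\ph\big(x_0'+R_{11}x'+R_{12}\bar h(x')\big)-\ph(x_0')-\tilde p'\cdot x'-\tilde p_n\,\bar h(x').
\]
Thus $(1-\eps_0)\bar\ph\le\bar u\le(1+\eps_0)\bar\ph$ holds on $\p\bar\Om\cap B_r$ trivially, and it only remains to bound $D^2_{x'}\bar\ph$. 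Differentiating twice in $x'$, the contribution of $\ph$ is $J^TD^2\ph(X')\,J+\big(\nabla\ph(X')\cdot R_{12}\big)D^2_{x'}\bar h(x')$ with $J:=R_{11}+R_{12}(\nabla\bar h(x'))^T$; since $|J-I|\le Cc/\rho$ for $r\le1$ and $|\nabla\ph(X')|\le\mu^{-1}|X'|\le C(\rho,\rho')r$, this lies in $[\tfrac\mu4,4\mu^{-1}]$ once $c,r$ are small. The remaining term $-\tilde p_n\,D^2_{x'}\bar h(x')$ has norm $\le|p_0|\cdot C/\rho\le C(\rho,\rho')c\le\mu/100$ for $c$ small---here the smallness of $|p_0|$ is used---so $D^2_{x'}\bar\ph\in[\tfrac\mu5,5\mu^{-1}]$ and $\bar\mu:=\mu/5$ works. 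For the part of H3 away from $0$: if $x\in\p\bar\Om$ with $x_n\le\rho''$ and $|x|\ge r/2$, then $X=x_0+Rx\in\p\Om$ satisfies $X_n\le\rho$ and $|X'|\ge r/4$, so either $X\in B_\rho$ with $u(X)=\ph(X')\ge\tfrac\mu2|X'|^2\ge c(r)$, or $X\notin B_{\rho/2}$ with $u(X)\ge\rho'$; since $u(x_0)+p_0\cdot Rx\le C(\rho,\rho')c$, we get $\bar u(x)\ge\rho'':=\tfrac12\min(c(r),\rho')$ after shrinking $c$.

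Finally H2: $\bar u\ge0$ and $\bar u(0)=0$ since $p_0\in\p u(x_0)$, so $S_h(\bar u)=\{\bar u<h\}$; if $x_{n+1}=tx_n$ with $t>0$ were a supporting plane at $0$, then $S_h(\bar u)\subset\{x_n<h/t\}$, contradicting the estimate $x^*_h(\bar u)\cdot e_n\ge h^{3/4}$ for small $h$, which holds for $\bar u$ because H1, H3, H4 are now established and the barriers $w_1,w_2$ of Proposition \ref{p1} only use these. Hence $\nabla\bar u(0)=0$ in the sense of H2, and $\bar u$, $\bar\Om$ satisfy all the hypotheses of the localization theorem. The main obstacle is the H3 step, and within it the gradient estimate $|\nabla u(x_0)|\le C(\rho,\rho')|x_0|$: this is what forces $c$ to depend on $\rho,\rho'$ and is precisely where the already-proven Theorem \ref{T1} (not merely H1--H4) enters; the rest is affine invariance and bookkeeping of the rotation error $|R-I|\lesssim c/\rho$.
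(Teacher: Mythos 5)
Your proposal follows essentially the same strategy as the paper: the crux is to bound $|\nabla u(x_0)|$, and this is done by invoking Theorem \ref{T1} at the origin and then comparing with an explicit barrier, after which the remaining hypotheses H1, H3, H4 follow by affine invariance and bookkeeping of the $O(c/\rho)$ rotation error. The paper's own proof is considerably terser (it declares ``we only have to check that $u$ separates quadratically away from the tangent plane at $x_0$''), but the mechanism is the same; in particular the paper derives the gradient bound by passing to the rescaled function $u_h(y)=\tfrac1h u(AF_hy)$ and using a parabolic barrier of the form $u_h(y_0)+\xi'\cdot z'+c|z'|^2+c^{1-n}(z_n^2-Mz_n)$, which is the barrier you should have cited (the Section \ref{s7} barrier) rather than the one from Lemma \ref{l1} --- the $w_1$ barrier vanishes on the $x_n$-axis and does not directly give a lower bound for the normal derivative.

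There is one genuine circularity in your verification of H2. You assert that the estimate $x_h^*(\bar u)\cdot e_n\ge h^{3/4}$ ``holds for $\bar u$ because H1, H3, H4 are now established and the barriers $w_1,w_2$ only use these,'' but the paper's proof of \eqref{2} via the barrier $w_2$ works by \emph{contradiction with} $\nabla u(0)=0$: one shows $w_2\le u$ and concludes this contradicts H2. Without H2, all one gets from $w_2\le \bar u$ is $\bar u\ge t'x_n$ for some $t'>0$, which is consistent with (rather than contradicting) the hypothesis you are trying to refute. So you cannot establish $d_h\ge h^{3/4}$ independently of H2. The fix is to bypass the $h^{3/4}$ estimate altogether: the barrier bound on the gradient already shows that the one-sided normal derivative $D_\nu u(x_0)$ is finite and small, and choosing $p_0$ to be the extreme element of the subgradient ray (i.e.\ $p_0\cdot\nu=D_\nu u(x_0)$) makes H2 for $\bar u$ automatic. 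In other words, H2 is a direct consequence of the gradient bound you already proved, and the explicit $h^{3/4}$ argument is both unnecessary and, as stated, circular.
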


\begin{proof}
  We only have to check that on $\p \Om$, $u$ separates quadratically away from the tangent plane at $x_0$, hence we need to show that $|\nabla u(x_0)|$ is sufficiently small when $|x_0|$ is close to the origin. By Theorem \ref{T1} there exists a sliding $A$, $|A|\le C_1(\rho,\rho')$ such that
 for $h \le c_1(\rho,\rho')$ small, the rescaled function
 \begin{equation}\label{80}
 u_h(y):=\frac 1 h u(A F_h y) \quad \quad F_h y:=(h^\frac 12y',h^\frac{1}{2+\alpha}y_n), \quad x=AF_h y,
 \end{equation}
satisfies in $S_1(u_h)$
$$ u_h(y)= \ph_h(y') \quad \mbox{on} \quad \p \Om_h=(A F_h)^{-1} \p \Om, \quad \quad \quad \frac \mu 2 I \le D^2_{y'} \ph_h \le 2 \mu I,$$
$$c_0 (|y'|^2+y_n^{2+\alpha}) \le u_h \le C_0(|y'|^2+y_n^{2+\alpha}), \quad \quad \det D^2u_h \le 2 y_n^\alpha.$$
where the last inequality follows from the fact that $u$ satisfies the same inequality in $\Om \cap B_\rho$. Now, if $y_0 \in \p \Om_h$ with $|y_0|<c$ small we can bound $|\nabla u_h(y_0)|$ as in Section \ref{s7}, by using a lower barrier of the type
$$u_h(y_0)+\xi' \cdot z'+c|z'|^2+c^{1-n} (z_n^2-Mz_n),$$
where $z$ denote the coordinates in a coordinate system centered at $y_0$ and with the $z_n$ axis pointing towards the inner normal to $\p \Om$.

In conclusion
$$|\nabla u_h(y_0)| \le C \quad \quad \Rightarrow \quad |\nabla u(x_0)|\le C_1 h^\frac{1+\alpha}{2+\alpha}, \quad \quad x_0=AF_h y_0,$$
and by choosing $h=c_2(\rho,\rho')$ small, we obtain the desired conclusion.

\end{proof}

From the proof above we see that if in Lemma \ref{l11} we have $\p \Om,\ph \in C^2$ in $B_\rho$ and
$$\det D^2 u=g \, d_{\p \Om}^\alpha,$$
for some function $g>0$ that is continuous on $\p \Om \cap B_\rho$,
then Theorem \ref{T2} applies at all points on $\p \Om \cap B_c$ with $c=c(\rho,\rho')$ small. In particular we obtain that $u$ is pointwise $C^2$ at all these points, and using the arguments above it can be shown that $D^2u$ is continuous on $\p \Om \cap B_c$.

\

Next we extend our estimates from $\p \Om$ to a small neighborhood of $\p \Om$ and prove Theorem \ref{T2.1}. 

{\it Proof of Theorem \ref{T2.1}}

In this proof we denote by $\bar c$, $\bar C$ various constants (that may change from line to line) which depend on $n$, $\alpha$, $\rho$, $\rho'$, $\beta$ and the $C^2$ modulus of continuity of $\ph$ and $\p \Om$.

Assume for simplicity that $D^2 \ph(0)=I$ and $g(0)=1$. We apply Theorem \ref{T2} and obtain that there exists a sliding $A$, with $|A| \le C(\rho,\rho')$, such that for any $\eta>0$
\begin{equation}\label{81}
(1-\eta) A \, \, S_h(U_0) \subset S_h(u) \subset (1+\eta) A \, \, S_h(U_0),
\end{equation}
for all $h \le \bar c(\eta)$.
 Let $t$ be the minimum value of $u-h^\frac{1+\alpha}{2+\alpha}x_n$ and $x_t$ the point where is achieved thus
 $$S_t(x_t)=\{ u<h^\frac{1+\alpha}{2+\alpha}x_n \}.$$
 Next we show that
 \begin{equation}\label{82}
 \|D^2 u\|_{C^\beta(S_{t/4}(x_t))} \le \bar C h^{-\frac \beta 2} , \quad \sup_{S_{t/4}(x_t)} \| D^2u-D^2u(0)\|\le \bar C \eta.
 \end{equation}

 From \eqref{81} we see that $t \sim h$ and also
 $$S_{t/2}(x_t) \subset \mathcal C:= \left \{|x'| \le C |x_n|   \right \}.$$
 In the cone $\mathcal C$, $d_{\p \Om}/x_n$ is a positive function with bounded Lipschitz norm, hence for all $t$ small
 $$\det D^2 u = \bar g \, \, x_n^\alpha \quad \quad \mbox{in $S_{t/2}(x_t)$}$$
 with $\bar g (0)=1$, $\|\bar g\|_{C^\beta} \le \bar C$.
 We let $u_h$ be the rescaled function given in \eqref{80} and let  $ x_t=A F_h y_t$ and
 notice that
 $$ A F_h  S_{t/(2h)}(y_t) = S_{t/2}(x_t), \quad \quad S_{t/h}(y_t)=\{u_h <y_n\}$$
where $S_t(y)$ denote the sections for $u_h$.

We have $$\det D^2 u_h= \bar g_h \, \, x_n^\alpha \quad \mbox{in}\quad S_{t/(2h)}(y_t),$$
with $$\bar g_h(y)=\bar g (AF_h y) \quad \quad \Rightarrow \quad \quad \|\bar g_h\|_{C^\beta} \le \bar C h^\frac {\beta}{2+\alpha}, \quad \bar g_h(0)=0.$$
From \eqref{81} we have $$|u_h -U_0| \le C \eta \quad \mbox{in} \quad  S_{t/(2h)}(y_t)$$
hence, by the interior $C^{2,\beta}$ estimates for Monge-Ampere equation, we obtain
$$\|D^2 u_h \|_{C^\beta} \le \bar C, \quad \|D^2_{x'} u_h - I \| \le \bar C \eta \quad \quad \mbox{in} \quad S_{t/(4h)}(y_t).$$
We write these inequalities in terms of $D^2 u$ and we obtain \eqref{82}. We apply the same argument at other boundary points instead of the origin, thus we may assume that \eqref{82} holds uniformly for all points $x^*\in \p \Om \cap B_\delta$ and their corresponding interior sections $S_{t'}(x^*_{t'})$ which become tangent to $\p \Om$ at $x^*$.

Let $y^* \in \p \Om_h \cap S_c(u_h)$, thus $|\nabla u_h(y^*)|$ is small if $c$ is small. This implies that the section
 $$S_{t'/h}(y^*_{t'}):=\{u_h<u_h(y^*) + (\nabla u_h(y^*) +\nu_{y^*}) \cdot (y-y^*)\},$$
with $\nu_{y^*}$ the inner normal to $\p \Om_h$, is a perturbation of the section $\{ u_h<y_n\}$. We obtain
  $$S_{t'/(4h)}(y^*_{t'}) \cap S_{t/(4 h)}(y_t) \ne \emptyset,$$
and the corresponding sections for $u$ satisfy
$$S_{t'/4}(x^*_{t'}) \cap S_{t/4}(x_t) \ne \emptyset,$$
if $x^*\in \p \Om \cap S_{ch}.$ This and \eqref{82} imply
$$\|D^2 u(x^*)-D^2u(0)\| \le \bar C \eta,$$
which together with \eqref{82} shows that $u \in C^2(\p \Om \cap B_\delta)$ for some small $\delta$.

\qed

\begin{rem}\label{r7}
From the proof above we see that if $g$ has a $C^\beta$ modulus of continuity only on $\p \Om$, i.e.
\begin{equation}\label{83}
|g(x)-g(x_0)| \le C |x-x_0|^ \beta  \quad \quad \mbox{for all $x\in \ov \Om$, $x_0\in \p \Om$,}
\end{equation}
then for $u \in C^{1,\gamma}(\ov \Om \cap B_\delta)$ for any $\gamma<1$, and with $\delta$ small depending also on $\gamma$.

Indeed, instead of the interior $C^{2,\beta}$ estimates we may apply the interior $C^{1,\gamma}$ estimates since $\bar g_h$ has small oscillation in $S_{t/(2h)}(y_t) $. We obtain
$$\|\nabla u_h \|_{C^\gamma} \le C \quad \quad \mbox{in} \quad S_{t/(4h)}(y_t), \quad \quad |\nabla u_h| \le C \quad \mbox{in} \quad S_1(u_h),$$
which rescaled back implies
$$\|\nabla u\|_{C^\gamma (S_{t/4}(x_t))} \le C h^\frac{1-\gamma}{2},  \quad \quad \quad \sup_{S_h}|\nabla u-\nabla u(0)| \le h^\frac 12,$$
and the claim easily follows.

\end{rem}

As a consequence of Theorem \ref{T2.1} we obtain Theorem \ref{T03}.

\

{\it Proof of Theorem \ref{T03}}

After multiplying by an appropriate constant we may suppose $\max_\Om|u|=1$. Since $\p \Om$ is uniformly convex, we can use explicit barriers at points on $\p \Om$ and obtain $|u| \le C d_{\p \Om}$  with $C$ a constant depending on $n$ and the lower bounds for the curvatures of $\p \Om$. Also by convexity we find $|u| \ge c d_{\p \Om}$.

These inequalities on $|u|$ imply that if $x_0 \in \p \Om$ then $c \le |\nabla u(x_0)| \le C$, hence on $\p \Om$ the function $u$ separates quadratically from its tangent plane at $x_0$. We apply Proposition \ref{p0} and obtain that $u$ is pointwise $C^{1,1/3}$ at all points on $\p \Om$, i.e.
$$0 \le u(x) - \nabla u(x_0) \cdot (x-x_0) \le C|x-x_0|^\frac 43  \quad \quad \mbox{for all $x\in \ov \Om$, $x_0\in \p \Om$.} $$
This implies $\nabla u \in C^{1/3}(\p \Om)$, which toghether with the inequality above gives that
$$g:= |u|/d_{\p \Om}$$
has a uniform $C^{1/3}$ modulus of continuity on $\p \Om$, i.e. \eqref{83} holds with $\beta=1/3$. By Remark \ref{r7} above we find $u \in C^{1,\gamma}(\ov \Om)$ which implies that $g \in C^\gamma(\ov \Om)$, and the conclusion follows by Theorem \ref{T2.1}.

\qed

Before we prove Theorem \ref{T02} we obtain a simple consequence of Thorem \ref{T3}. We recall the notation used in Section \ref{s4}
$$b(h):=\max_{S_h} x_n.$$

\begin{lem}\label{15}
For any $\eps>0$ small, there exist constants $\bar c$ small, $K$ large depending on $\mu$, $n$, $\alpha$ and $\eps$ such that if
$$u\in \mathcal D_0^\mu (a_1,...,a_{n-1}), \quad \quad \mbox{with} \quad a_{n-1} \ge K$$
and $\mu \le a_1 \le \cdots \le a_{n-1} \le \infty,$ then
$$b(t) \ge  (2 /\mu) \, t^\frac{1}{3+\alpha - \eps} \quad \quad \mbox{for some $t \in [\bar c,1]$}.$$
\end{lem}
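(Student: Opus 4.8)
The plan is to argue by contradiction, using the compactness of the class $\mathcal D_0^\mu$ from Lemma \ref{l4} to pass to a limiting solution whose boundary data degenerates completely, and then to extract the lower bound on $b$ from Theorem \ref{T3}. Two bookkeeping facts are used throughout: for any $v\in\mathcal D_0^\mu$, if $x_h^\ast$ is the center of mass of $S_h(v)$ and $d_1(h),\ldots,d_n(h)$ are the parameters of the normalization of $v$ in $S_h$ (Lemmas \ref{l6}, \ref{l5}), then $b_v(h)\ge x_h^\ast\cdot e_n\ge c\,d_n(h)$, and $\bigl(\prod_{i=1}^{n-1}d_i(h)^2\bigr)\,d_n(h)^{2+\alpha}=h^n$.

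The core step is the claim: if $w\in\mathcal D_0^{\nu}(\underbrace{1,\ldots,1}_{k},\infty,\ldots,\infty)$ with $0\le k\le n-2$, then $b_w(h)\ge c_\ast\,h^{1/(3+\alpha)}$ for all $h\in(0,1]$, with $c_\ast>0$ depending only on $n,\alpha,\nu,k$. For this I apply Theorem \ref{T3} to $w$: it gives $\mathcal S_h'(w)\subset\{|(x_{k+1},\ldots,x_{n-1})|\le Ch^{\beta}\}$ with $\beta=1/(2(n+1-k+\alpha))$, hence $d_i(h)\le Ch^{1/2+\beta}$ for $k<i<n$; together with $d_i(h)\le Ch^{1/2}$ for $i\le k$ (Lemma \ref{l5}, or Lemma \ref{l6} when $k=0$) this gives $\prod_{i<n}d_i(h)^2\le C'h^{\,n-1+2\beta(n-1-k)}$. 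Substituting into the volume identity and using $1-2\beta(n-1-k)=(2+\alpha)/(n+1-k+\alpha)$ yields $d_n(h)^{2+\alpha}\ge c'h^{(2+\alpha)/(n+1-k+\alpha)}$, i.e. $d_n(h)\ge c'h^{1/(n+1-k+\alpha)}\ge c'h^{1/(3+\alpha)}$ for $h\le1$, since $k\le n-2$. The claim follows from $b_w(h)\ge c\,d_n(h)$.

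Now suppose the lemma fails for some $\eps>0$. Applying the negation with $K=m$ and $\bar c=1/m$ produces $u_m\in\mathcal D_0^\mu(a_1^m,\ldots,a_{n-1}^m)$ with $a_{n-1}^m\ge m$ and $b_{u_m}(t)<\tfrac2\mu\,t^{1/(3+\alpha-\eps)}$ for every $t\in[1/m,1]$. Since $a_{n-1}^m\to\infty$, Lemma \ref{l4} lets us pass (along a subsequence) to a limit $u\in\mathcal D_0^\mu(a_1,\ldots,a_l,\infty,\ldots,\infty)$ with $l\le n-2$ and $a_1,\ldots,a_l\in[\mu,\infty)$; as sections converge in Hausdorff distance we get $b_{u_m}(t)\to b_u(t)$, hence $b_u(t)\le\tfrac2\mu\,t^{1/(3+\alpha-\eps)}$ for all $t\in(0,1]$. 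On the other hand the two-sided bound $\mu\,\mathcal N_l\le D^2\psi_u\le\mu^{-1}\mathcal N_l$ (with $\mathcal N_l=\mathrm{diag}(a_1^2,\ldots,a_l^2)$, $a_i\in[\mu,\infty)$) forces $u\in\mathcal D_0^{\nu}(\underbrace{1,\ldots,1}_{l},\infty,\ldots,\infty)$ for $\nu:=\min(\mu^3,\mu a_l^{-2})>0$, so the core step applies and gives $b_u(h)\ge c_u h^{1/(3+\alpha)}$ for all $h\in(0,1]$. Combining the two bounds, $c_u\,h^{\,1/(3+\alpha)-1/(3+\alpha-\eps)}\le\tfrac2\mu$ for all $h\in(0,1]$; but the exponent is strictly negative, so letting $h\to0$ gives $c_u\le0$, a contradiction. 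This proves the lemma.

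The only facts still needing checking are routine: that $b_v$ is continuous under the convergence of Lemma \ref{l4} (standard for convex functions whose supergraphs converge, since they are strictly convex in the interior with minimum $0$ at the origin); that Lemma \ref{l5} is available here, which it is because Theorem \ref{T3} is fully established in Section \ref{s6} for all $0\le k\le n-2$; and the elementary inequality $b_v(h)\ge x_h^\ast\cdot e_n\ge c\,d_n(h)$. The main subtlety, rather than obstacle, is that the constant $c_\ast$ in the core step is allowed to depend on the (finite but possibly large) parameters $a_1,\ldots,a_l$ of the limit; this is harmless because the final contradiction is obtained by sending $h\to0$ and is insensitive to the size of $c_u$, so no uniformity of $c_\ast$ across the whole class is required. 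If one wanted explicit $\bar c,K$ one would instead have to run the normalization of Lemma \ref{l5} on $u$ to first reduce to bounded parameters and then track the scale at which this occurs.
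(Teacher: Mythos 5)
Your proof is correct, and the underlying mechanism is the same as the paper's: reduce to the estimate $b(h)\ge c\,h^{1/(3+\alpha)}$ for the normalized classes $\mathcal D_0^{\nu}(1,\ldots,1,\infty,\ldots,\infty)$ with $k\le n-2$ (the paper's \eqref{84}), and then invoke compactness of $\mathcal D_0^\mu$. The organization, however, is genuinely different. The paper runs a finite bootstrap: it first shows (by compactness plus \eqref{84} with $k=0$) that the conclusion holds if $a_1\ge C_1(\eps)$; if instead $a_1\le C_1$ it rescales $a_1$ to order one, shrinking $\mu$ to some $\tilde\mu(\mu,C_1)$, and repeats with $k=1$, producing thresholds $C_1\ll C_2\ll\cdots\ll C_{n-1}=:K$, each obtained by a separate compactness argument. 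You instead do a single contradiction argument: negate, extract a limit $u\in\mathcal D_0^\mu(a_1,\ldots,a_l,\infty,\ldots,\infty)$ with $l\le n-2$ via Lemma \ref{l4}, observe $u$ lies in some $\mathcal D_0^\nu(1,\ldots,1,\infty,\ldots,\infty)$ (with $\nu$ depending on the finite limit parameters), apply the core estimate to $u$, and contradict the inherited upper bound $b_u(t)\le(2/\mu)\,t^{1/(3+\alpha-\eps)}$ by sending $t\to 0$. The key subtlety you correctly flag is that the constant in the core estimate is then allowed to depend on the limiting solution $u$; this is harmless precisely because the exponents $1/(3+\alpha)$ and $1/(3+\alpha-\eps)$ differ, so the contradiction is obtained in the limit $t\to 0$ regardless of the size of that constant. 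The trade-off is that the paper's version tracks $\bar c$ and $K$ more explicitly, whereas yours is shorter and conceptually cleaner; both are valid. Two small remarks: you re-derive the bound $d_n\gtrsim h^{1/(n+1-k+\alpha)}$ from the stated conclusion of Theorem \ref{T3} plus the volume identity, whereas the paper simply quotes \eqref{65.65} from the proof of Proposition \ref{p5} -- both are fine. And the routine convergence $b_{u_m}(t)\to b_u(t)$ needs a word of care because of the boundary discontinuities of functions in $\mathcal D_0^\mu$, but since $b$ is monotone in $t$ and the maximizing points lie in the interior of $\Omega$ for $t<1$, the argument you sketch goes through.
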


\begin{proof}
In Theorem \ref{T3} we showed that if $0\le k \le n-2$,
 \begin{equation}\label{84}
 u \in \mathcal D_0^\mu(\underbrace{1,...,1}_{k \, \, times},\infty..,\infty)  \quad \quad \Rightarrow \quad b(h) \ge C h^\frac{1}{3+\alpha},
 \end{equation}
 for some universal $C$ depending on $\mu$, $n$, $\alpha$. Indeed, in Lemma \ref{l5} we obtained $c d_n \le b(h) \le C d_n$ and in \eqref{65.65}
 $$ch \le d_n^{n+1-k+\alpha} \le  C h, \quad \quad n+1-k+\alpha \ge 3 + \alpha.$$
 Now the lemma follows by compactness similar to the proof of Lemma \ref{l4.1}.
 From \eqref{84} with $k=0$ and by compactness, we can find $C_1(\eps)$ large such that the conclusion of the lemma holds if $a_1 \ge C_1$.

 If $a_1 \le C_1$ then we use compactness and \eqref{84} with $k=1$ (and $\tilde \mu$ depending on $\mu$ and $C_1$), and obtain that there exists $C_2(\eps)$, $C_2 \gg C_1$ such that if $a_2 \ge C_2$ then the conclusion of the lemma is satisfied.

 We obtain the conclusion by repeating this argument $n-2$ times.

\end{proof}

We conclude the section with the proof of Theorem \ref{T02}.

\

{\it Proof of Theorem \ref{T02}}

From Theorem \ref{T1} we know that after subtracting the tangent plane at the origin and after performing an affine deformation given by a sliding along $x_n=0$ we may suppose that
\begin{equation}\label{85}
 u=O(|x'|^2+x_n^{2+\alpha}), \quad \quad \mbox{near the origin.}
\end{equation}

For $h$ large we define as usually $d_1 \le ...\le d_{n-1}$ to be the length of the axis of the ellipsoid which is equivalent to $S_h \cap \{x_n=x^*_h \cdot e_n \}$, and we let $d_n$ such that
$$\left (\prod_1^{n-1} d_i^2\right) \, d_n^{2+\alpha}=h^n.$$
As in the proof of Proposition \ref{p1} we can find $c_0$, $C_0$ depending only an $n$ and $\alpha$, and a sliding $A_h$ along $x_n=0$ such that,
$$c_0 d_n \le b(h) \le C_0 d_n,$$
and the rescaling
$$u_h(x):=\frac 1 h u(A_h D_hx) \quad \quad \mbox{with} \quad D_h:=diag(d_1,..,d_n), $$
satisfies
$$u_h \in \mathcal D_0^{c_0}(a_1,..,a_{n-1}) \quad \quad \mbox{with} \quad a_i=d_i h^{-\frac 12} .$$

If
 \begin{equation}\label{86}
 b(h) \le c_1(\eps) h^{1/(2+\alpha)}
 \end{equation} 
 for some $c_1$ sufficiently small then $a_{n-1} \ge K$ with $K$ the constant from Lemma \ref{15} applied to $u_h$. Then
$$\frac{b(th)}{b(h)}=\frac{b_{u_h}(t)}{b_{u_h}(1)} \ge 2\, t^\frac{1}{3+\alpha-\eps} \quad \quad \mbox{for some $t\in [\bar c,1]$},$$
hence $$q(h) \le \frac 12 q(t h) \quad \quad \mbox{with} \quad q(h):=b(h) h^{-\frac{1}{3+\alpha-\eps}}.$$
Thus if \eqref{86} holds for all $h$ large then $q(h) \to 0$ as $h \to \infty$. This contradicts the growth assumption for $u$ at infinity on the $x_n$ axis.

In conclusion 
$$b(h) \ge c_1(\eps) h^\frac{1}{2+\alpha}$$
for a sequence $h=h_m$ tending to $\infty$, hence $c(\eps) \le d_ih^{-1/2} \le C(\eps)$ if $i < n$. This implies that for this sequence of $h_m$'s, the rescaled function
$$\tilde u_h(x):=\frac 1 h u(A_h F_hx) \quad \quad \mbox{with} \quad F_hx:=(h^\frac 12 x', h^\frac{1}{2+\alpha}), $$
satisfies the hypotheses of Theorem \ref{T2} for any $\eta>0$. Hence there exists $c_2(\eps,\eta)$ such that
$$(1-\eta)U_0 \le \tilde u_h(\tilde A_h x) \le (1+\eta) U_0(x) \quad \mbox{holds if $|x|\le c_2$,}  $$
for some sliding $\tilde A_h$. In terms of $u$ this means that
$$(1-\eta)U_0 \le u(\bar A_h x) \le (1+\eta)U_0 \quad  \mbox{holds if $|F_h^{-1}x|\le c_2$} ,$$
for some sliding $\bar A_h$. Using also \eqref{85} we obtain $\bar A_h=I$. We let $h \to \infty$, thus
$$(1-\eta)U_0 \le u \le (1+\eta) U_0 \quad \mbox{for all $x$,}$$
and, since $\eta$ is arbitrary, we find $u=U_0$.

\qed

\end{document}